\numberwithin{equation}{section}
\theoremstyle{plain}%
\newtheorem{theorem}{Theorem}[section]% meant for sectionwise numbers
\newtheorem{proposition}[theorem]{Proposition}% 
\newtheorem{lemma}[theorem]{Lemma}% 
\theoremstyle{remark}%
\newtheorem{remark}[theorem]{Remark}%
\theoremstyle{definition}%
\newtheorem{definition}[theorem]{Definition}%
\newtheorem{assumption}[theorem]{Assumption}%
\newcommand{\N}{{\mathbb N}}
\newcommand{\R}{{\mathbb R}}
\newcommand{\Z}{{\mathbb Z}}
\newcommand{\cB}{{\mathcal B}}
\newcommand{\cC}{{\mathcal C}}
\newcommand{\cD}{{\mathcal D}}
\newcommand{\cE}{{\mathcal E}}
\newcommand{\cF}{{\mathcal F}}
\newcommand{\cG}{{\mathcal G}}
\newcommand{\cH}{{\mathcal H}}
\newcommand{\cL}{{\mathcal L}}
\newcommand{\cM}{{\mathcal M}}
\newcommand{\cP}{{\mathcal P}}
\newcommand{\cQ}{{\mathcal Q}}
\newcommand{\dd}{{\mathrm{d}}}
\renewcommand{\a}{\alpha}
\newcommand{\gm}{\gamma}\newcommand{\dl}{\delta}
\newcommand{\eps}{\varepsilon}
\newcommand{\lm}{\lambda}\newcommand{\kp}{\kappa}
\newcommand{\zt}{\zeta}
\newcommand{\sg}{\sigma}
\newcommand{\ph}{\varphi}
\newcommand{\om}{\omega}
\newcommand{\Gm}{\Gamma}
\newcommand{\Lm}{\Lambda}
\newcommand{\Sg}{\Sigma}
\newcommand{\Om}{\Omega}
\newcommand{\la}{\langle}\newcommand{\ra}{\rangle}
\newcommand{\wg}{\wedge}
\newcommand{\bff}{{\bm{f}}}
\newcommand{\bfh}{{\bm{h}}}
\newcommand{\bfr}{{\bm{r}}}
\newcommand{\bfM}{{\mathbf{M}}}
\newcommand{\bfone}{{\mathbf{1}}}
\newcommand{\HS}{\mathrm{HS}}
\newcommand{\maruM}{{\stackrel{\,\circ}{\smash{\cM}\rule{0pt}{1.1ex}}}}
\DeclareRobustCommand{\supp}{\mathop{\mathrm{supp}}}
\DeclareRobustCommand{\Cp}{\mathrm{Cap}}
\DeclareRobustCommand{\osc}{\mathop{\mathrm{osc}}}
\DeclareRobustCommand{\rank}{\mathop{\mathrm{rank}}}
\DeclareRobustCommand{\tr}{\mathop{\mathrm{tr}}}
\DeclareRobustCommand{\Mat}{\mathrm{Mat}}
\DeclareRobustCommand{\muesssup}{\mathop{\mu\text{-}\mathrm{ess\,sup}}}
\DeclareRobustCommand{\muessinf}{\mathop{\mu\text{-}\mathrm{ess\,inf}}}
\DeclareRobustCommand{\muosc}{\mathop{\mu\text{-}\mathrm{osc}}}
\DeclareRobustCommand{\nuesssup}{\mathop{\nu\text{-}\mathrm{ess\,sup}}}
\DeclareRobustCommand{\diam}{\mathop{\mathrm{diam}}}
\begin{document}

\title{Martingale dimensions for a class of metric measure spaces}

\author{Masanori Hino\\
Department of Mathematics, Kyoto University, Kyoto 606-8502, Japan\\
{\tt hino@math.kyoto-u.ac.jp}}
\date{}

\maketitle
\begin{abstract}
We establish a general analytic framework for determining the AF-martingale dimension of diffusion processes associated with strongly local regular Dirichlet forms on metric measure spaces. While previous approaches typically relied on self-similarity, our argument is based instead on purely analytic balance conditions between energy measures and relative capacities. Under this localized analytic condition, we prove that the AF-martingale dimension collapses to one, thereby indicating that the intrinsic stochastic structure remains effectively one-dimensional even on highly irregular or inhomogeneous spaces. As a key technical ingredient, our proof employs a simultaneous blow-up and push-forward scheme for harmonic functions and their energy measures, allowing us to control the limiting behavior across scales without invoking heat kernel bounds or explicit geometric models.
The main theorem is applied in particular to inhomogeneous Sierpinski gaskets, which do not possess self-similarity or uniform geometric structure. Our method provides a general analytic perspective that can be used to study the one-dimensional probabilistic structure of diffusions through martingale additive functionals.
\end{abstract}
\bigskip

\noindent
\textbf{Keywords:} martingale dimension, Dirichlet form, energy measure, analysis on fractals, metric measure space

\smallskip

\noindent
\textbf{MSC 2020:} 31E05, 60J46, 60J60, 28A80, 60G44

\section{Introduction}
On a metric measure space, the Hausdorff dimension $d_\mathrm{H}$, the spectral dimension $d_\mathrm{s}$, and the martingale dimension $d_\mathrm{m}$ are some of numerical characteristics reflecting distinct aspects of the geometry and stochastic behavior of an associated diffusion process.
In classical settings---such as Brownian motion on $\R^d$ or on smooth Riemannian manifolds---these dimensions all coincide. However, in more irregular spaces, particularly on fractals, they are typically different. This divergence reflects the absence of a local coordinate system or an associated stochastic differential equation and highlights the subtle interplay between the geometry of the space and the stochastic structure of the diffusion. Understanding this kind of local behavior has been a recurrent theme in probability theory.

A central quantity in this context is the \emph{AF-martingale dimension} $d_\mathrm{m}$, which measures the minimal number of martingale additive functionals (MAFs) required to represent all finite-energy MAFs associated with a diffusion. As such, it provides a quantitative indicator of the ``local probabilistic complexity'' of the process. Despite its importance, determining $d_\mathrm{m}$ is often highly nontrivial, especially when the diffusion is defined analytically via a Dirichlet form rather than through an explicit stochastic differential equation.

The first significant result in this direction was obtained by Kusuoka~\cite{Kus89}, who proved that $d_\mathrm{m} = 1$ for the canonical diffusion on Sierpinski gaskets of arbitrary dimension, thereby answering a question posed in~\cite{BP88}. His argument relied critically on the exact self-similarity and finite ramification of the Sierpinski gasket. This result was later extended to other fractal classes, including nested fractals~\cite{Li90}, and to various post-critically finite and infinitely ramified self-similar structures---including Sierpinski carpets---in~\cite{Hi13}. In all these works, the proofs rest essentially on explicit self-similar geometric structures.

In a different direction, recent works by Murugan and collaborators \cite{Mu24,Mu25,EM25} have clarified several aspects of the relationship between the martingale dimension $d_{\mathrm{m}}$ and the Hausdorff dimension $d_{\mathrm{H}}$ in general metric measure spaces. In particular, $d_{\mathrm{m}} = d_{\mathrm{H}}$ was proved under two-sided Gaussian heat kernel bounds \cite{Mu25}, and the finiteness of $d_{\mathrm{m}}$ was established under two-sided sub-Gaussian bounds \cite{EM25}.
In this context, it remains an open problem to develop a general analytic framework that can determine $d_{\mathrm{m}}$ precisely in the sub-Gaussian regime---typically encountered in fractal diffusions.

While higher-dimensional martingale structures may also occur in more complex settings, our focus here is on the analytically low-dimensional case.
The aim of this paper is to provide such a framework in a purely analytic setting. We establish balance conditions between energy measures and relative capacities that ensures $d_{\mathrm{m}} = 1$ for a wide class of strongly local regular Dirichlet forms, without assuming any global self-similarity or heat kernel bounds. 
These conditions are verified for a class of fractal models, including inhomogeneous Sierpinski gaskets of arbitrarily large dimension, lacking both self-similarity and standard heat kernel estimates. This yields a general analytic principle that helps to account for the one-dimensionality of diffusions even in the absence of specific geometric structures.

This analytic perspective also connects naturally with recent developments in the analysis of metric measure spaces. In the framework pioneered by Cheeger~\cite{Ch99}, a differential structure can be constructed under doubling and Poincar\'e estimates. See also \cite{HKST15} for further developments. By contrast, the Dirichlet form approach offers an alternative perspective: the measurable Riemannian structure was introduced in~\cite{Hi13b}, where the AF-martingale dimension plays the role of the maximal dimension of virtual cotangent spaces. Recent results in~\cite{Mu25} reveal deep links between these approaches under certain heat kernel assumptions. Our contribution complements this line of inquiry by demonstrating that, under a purely analytic condition, the state space admits a one-dimensional measurable Riemannian structure. Our approach may provide a viewpoint that extends beyond classical fractal settings and could serve as a basis for further analysis on general metric measure spaces.

The core of the proof is a simultaneous ``blow-up and push-forward'' procedure for harmonic functions and their associated energy measures.
At each scale, we rescale a set of harmonic functions so that they behave uniformly, and then push forward the corresponding energy measures under the rescaling map.
The blow-up procedure describes the local (infinitesimal) behavior of the diffusion, whereas the push-forward step transports the resulting local structures into Euclidean space, where they can be analyzed on a common footing.
If the AF-martingale dimension were two or larger, these rescaled functions would produce a sequence of functions that violates one of the energy--capacity balance conditions ((A3) (c) in Assumption~\ref{assumption}), thereby yielding a contradiction.
Variants of this idea were used in the earlier work~\cite{Hi13} in the context of self-similar fractals, where the blow-up limit functions could be considered using the underlying self-similar structure.
Our argument extends this scheme to a broad class of spaces by combining the blow-up analysis with a localized energy--capacity balance.
In particular, the method does not require any global self-similarity or uniform geometric invariance, which are essential in the earlier studies~\cite{Hi08,Hi13}.

The remainder of the paper is organized as follows. In Section~\ref{sec:def} we review the notion of the AF-martingale dimension and its representation in terms of Dirichlet forms and energy measures. Section~\ref{sec:results} introduces our analytic assumptions, states the main theorem (Theorem~\ref{th:main}), and prove some preliminary claims. Section~\ref{sec:main} contains the proof of Theorem~\ref{th:main}, following the framework outlined above. Examples are given in Section~\ref{sec:example}, where the theorem is applied in particular to classes of self-similar and inhomogeneous fractals.

A preliminary announcement of the main results was given in~\cite{Hi25}; the present paper contains the full proofs.

\subsection*{Notation}
\begin{itemize}
\item $\Z_+=\{m\in\Z\mid m\ge0\}$.
\item $a\vee b=\max\{a,b\}$, $a\wg b=\min\{a,b\}$.
\item $A\triangle B$ denotes the symmetric difference of two sets $A$ and $B$.
\item $\bfone_A$ denotes the indicator function of a set $A$.
\item For a map $\Phi$ and a set $A$, $\Phi|_A$ denotes the map $\Phi$ whose defining set is restricted to $A$.
\item $\psi^* f$ denotes the pullback of $f$ by the map $\psi$, that is, $\psi^* f=f\circ\psi$.
\item For a topological space $K$, $C(K)$ denotes the totality of  real continuous functions on $K$, and $C_c(K)$ denotes the totality of functions in $C(K)$ with compact support.
\item $C^1_b(\R^d)$ denotes the totality of real and bounded $C^1$-functions on $\R^d$ with bounded first order derivatives.
\item $\cL^d$ denotes the $d$-dimensional Lebesgue measure.
\item $W^{r,p}(\R^d)$ denotes the $L^p$-Sobolev space with order $r$ on $\R^d$.
\item $\|\cdot\|_{L^p(K,\mu)}$ denotes the $L^p$-norm of the $L^p$-space $L^p(K,\mu)$, $1\le p\le \infty$.
\item The symbol $\fint$ denotes the normalized integral, that is, $\fint_A f\,\dd\nu=\nu(A)^{-1}\int_A f\,\dd\nu$.
\item For a signed measure $\nu$, $|\nu|$ denotes the total variation measure of $\nu$.
\item For a measure $\nu$ and a map $\Phi$, $\Phi_* \nu$ denotes the push-forward measure of $\nu$ by $\Phi$.
\item For a measure $\nu$ and a measurable set $A$, $\nu|_A$ denotes the measure $\nu$ whose domain is restricted to $A$.
\item $\|F\|_\infty$ denotes the supremum norm of a function $F$.
\item $|a|$ denotes the Euclidean norm of $a\in\R^d$.
\item $\|A\|_{\HS}$ denotes the Hilbert--Schmidt norm of a matrix $A$.
\item $\dl_{ij}$ denotes the Kronecker delta, that is, $\dl_{ij}=\begin{cases}1&(i=j)\\0&(i\ne j).\end{cases}$
\end{itemize}
Note that we tacitly assume all functions in $\cF$ are taken in their quasi-continuous versions unless otherwise mentioned (see the beginning of Section~\ref{sec:results}).

\section{Martingale dimensions and their representation in terms of Dirichlet forms}\label{sec:def}
In order to rigorously define the AF-martingale dimension, we begin by recalling the framework of symmetric Dirichlet forms and their associated diffusion processes, following~\cite{Hi10,FOT11}. This analytic foundation will allow us to represent martingale additive functionals in terms of energy measures, which is essential for formulating and proving our main results in later sections.

Let $K$ be a locally compact separable metrizable space. Its one-point compactification is denoted by $K_\Delta=K\cup\{\Delta\}$. The Borel $\sg$-field on $K$ (resp.\ $K_\Delta$) is denoted by $\cB$ (resp.\ $\cB_\Delta$).
Let $\mu$ be a positive Radon measure on $K$ with full support. 
We recall the definition of a symmetric regular Dirichlet form $(\cE,\cF)$ on $L^2(K,\mu)$. It consists of a bilinear form and a function space satisfying the following properties:
\begin{itemize}
\item $\cF$ is a dense linear subspace of $L^2(K,\mu)$.
\item $\cE$ is a non-negative definite symmetric bilinear form on $\cF$.
\item (Closedness) By equipping $\cF$ with the inner product $(f,g)_\cF:=\cE(f,g)+\int_K f g\,\dd\mu$, $\cF$ becomes a Hilbert space.
\item (Markov property) For every $f\in\cF$, $\hat f:=(0\vee f)\wg 1$ belongs to $\cF$ and $\cE(\hat f,\hat f)\le \cE(f,f)$.
\item (Regularity) $\cF\cap C_c(K)$ is dense in $\cF$ with the topology derived by $(\cdot,\cdot)_\cF$ and dense in $C_c(K)$ with the uniform topology.
\end{itemize}
A non-positive self-adjoint operator $L$ on $L^2(K,\mu)$ is associated with $(\cE,\cF)$ in the sense that the domain of $\sqrt{-L}$ is equal to $\cF$ and 
\[
\cE(f,g)=\int_K (\sqrt{-L}f)(\sqrt{-L}g)\,\dd\mu\quad\text{for $f,g\in \cF$}.
\]
The operator $L$ generates the semigroup $\{T_t\}_{t\ge0}$ on $L^2(K,\mu)$ by defining $T_t=e^{t L}$. 
For a subset $A$ of $K$, we define the (1-)\emph{capacity} $\Cp_1(A)$ of $A$ as
\[
\Cp_1(A)=\inf\left\{\cE(f,f)+\int_K f^2\,\dd\mu\mathrel{}\middle|\mathrel{}\parbox{0.45\hsize}{$f\in\cF$ and $f\ge 1$ $\mu$-a.e.\ on some open set containing $A$}\right\},
\]
where $\inf\emptyset=+\infty$.
A Borel measure $\lm$ on $K$ is called \emph{smooth} if the following two conditions are satisfied.
\begin{enumerate}
\item $\lm$ charges no set of zero capacity, that is, $\lm(A)=0$ for all Borel subsets $A$ of $K$ with $\Cp_1(A)=0$.
\item There exists an increasing sequence $\{F_n\}_{n=1}^\infty$ of closed sets of $K$ such that $\lm(F_n)<\infty$ for all $n$ and $\lim_{n\to\infty}\Cp_1(C\setminus F_n)=0$ for any compact subset $C$ of $K$.
\end{enumerate}

In what follows, we further assume that $(\cE,\cF)$ is \emph{strongly local}. That is, $\cE(u,v)=0$ for $u,v\in\cF$ if $v$ is constant on a neighborhood of $\supp[u]$, where 
\[
\supp[u]=\left\{x\in K\mathrel{}\middle|\mathrel{} \int_U |u|\,\dd\mu>0\text{ for any neighborhood $U$ of $x$}\right\}.
\]
Then, from \cite[Theorem~4.5.3]{FOT11}, there corresponds to a $\mu$-symmetric diffusion process $\bfM=(\Om,\cF_\infty,\{\cF_t\}_{t\ge0},\{X_t\}_{t\ge0},\{P_x\}_{x\in K_\Delta})$ on $K$ satisfying $P_x(X_{\zeta-}\in K,\ \zeta<\infty)=0$ for every $x\in K$.
More precisely speaking, we can construct a diffusion process $\{X_t\}_{t\ge0}$ on $K$ defined on a filtered probability space $(\Om,\cF_\infty,\{\cF_t\}_{t\ge0})$ with a family of probability measures $\{P_x\}_{x\in K_\Delta}$ and shift operators $\{\theta_t\}_{t\in[0,+ \infty]}$ such that the following hold.
\begin{itemize}
\item $(\Om,\cF_\infty,\{\cF_t\}_{t\ge0})$ is a filtered probability space and the filtration $\{\cF_t\}_{t\ge0}$ is right continuous ($\bigcap_{t>s}\cF_t=\cF_s$ for all $s\ge0$).
\item For each $x\in K_\Delta$, $P_x$ is a probability measure on $ (\Om,\cF_\infty)$.
\item For each $t\ge0$, $X_t\colon \Om\to K_\Delta$ is $\cF_t/\cB_\Delta$-measurable. We set $X_\infty(\om)=\Delta$ for $\om\in\Om$.
\item For each $t\ge0$ and $A\in\cB$, $P_x(X_t\in A)$ is $\cB$-measurable in $x\in K$.
\item For any $t\ge0$, $P_\Delta(X_t=\Delta)=1$.
\item (Normality) For any $x\in K$, $P_x(X_0=x)=1$.
\item $X_t(\om)=\Delta$ for all $t\ge\zeta(\om)$, where $\zeta(\om)=\inf\{t\ge0\mid X_t(\om)=\Delta\}$ is the life time of $\{X_t\}_{t \ge0}$.
\item (Continuity of sample paths) For each $\om\in\Om$, the map $[0,\infty)\ni t\mapsto X_t(\om)\in K_\Delta$ is continuous.
\item Each $\theta_t$ is a measurable map from $(\Om,\cF_\infty)$ to itself. For each $s\ge0$ and $t\in[0,+\infty]$, $X_s\circ\theta_t=X_{s+t}$.
\item (Strong Markov property) For $A\in \cB_\Delta$, $s\ge0$, any $\{\cF_t\}_{t\ge0}$-stopping time $\sg$, and any probability measure $\lm$ on $(K_\Delta,\cB_\Delta)$,
\[
  P_\lm(X_{\sg+s}\in A\mid \cF_\sg)=P_{X_\sg}(X_s\in A)\quad P_\lm\text{-a.s.}
\]
Here, $P_\lm$ is the probability measure on $(\Om,\cF_\infty)$ defined as \begin{equation}\label{eq:Plm}
P_\lm(\Lambda)=\int_{K_\Delta}P_x(\Lambda)\,\lm(\dd x),\qquad \Lambda\in \cF_\infty.
\end{equation}
\item (Correspondence with Dirichlet forms) For any $t\ge0$ and $f\in L^2(K,\mu)$ that are $\cB$-measurable, $T_t f(x)=E_x[f(X_t)]$ holds for $\mu$-a.e.\,$x\in K$. Here $E_x$ denotes the expectation with respect to $P_x$, and $f$ extends to a function on $K_\Delta$ by setting $f(\Delta)=0$.
\end{itemize}

A subset $A$ of $K_\Delta$ is called \emph{nearly Borel} measurable if, for any Borel probability measure $\lm$ on $K_\Delta$, there exist Borel subsets $A_1$ and $A_2$ of $K_\Delta$ such that $A_1\subset A\subset A_2$ and $P_\lm(\{\text{There exists some $t\ge0$ such that $X_t\in A_2\setminus A_1$}\})=0$.
A subset $N$ of $K$ is called \emph{exceptional} if there exists a nearly Borel set $\tilde N$ including $N$ such that $P_\mu(\sg_{\tilde N}<\infty)=0$, where $\sg_{\tilde N}(\om)=\inf\{t>0\mid X_t(\om)\in \tilde N\}$. From \cite[Theorem~4.2.1]{FOT11}, a subset $N$ of $K$ is exceptional if and only if $\Cp_1(N)=0$.
We say that statements $P(x)$ depending on $x\in K$ hold for \emph{quasi-every} $x$ (q.e.\,$x$ in abbreviation) if $P(x)$ holds for all $x\in K\setminus N$ for some exceptional set $N$.

Without loss of generality, we assume that the filtration $\{\cF_t\}_{t\ge0}$ is the minimum completed admissible filtration. That is, $\cF_t$ $(t\ge0)$ and $\cF_\infty$ are defined in the following way.
\begin{itemize}
\item Let $\cF_t^0=\sg(\{X_s\mid 0\le s\le t\})$ and $\cF_\infty^0=\sg(\{X_s\mid s\ge 0\})$.
\item Let $\cP(K_\Delta)$ be the set of all probability measures on $(K_\Delta,\cB_\Delta)$.
\item For $\lm\in\cP(K_\Delta)$ let $P_\lm$ be the probability measure on $(\Om,\cF_\infty^0)$ defined as in \eqref{eq:Plm}.
\item For $\lm\in\cP(K_\Delta)$, $\cF_\infty^\lm$ denotes the completion of $\cF_\infty^0$ with respect to $P_\lm$, and $\cF_t^\lm$ denotes the completion of $\cF_t^0$ in $\cF_\infty^\lm$ with respect to $P_\lm$. That is, $\cF_t^\lm=\{\Lm\in\cF_\infty^\lm\mid \text{there exists $\Lm'\in\cF_t^0$ such that $P_\lm(\Lm\bigtriangleup \Lm')=0$}\}$.
\item Let $\cF_t=\bigcap_{\lm\in\cP(K_\Delta)}\cF_t^\lm$ and $\cF_\infty=\bigcap_{\lm\in\cP(K_\Delta)}\cF_\infty^\lm$.
\end{itemize}

An \emph{additive functional} (abbreviated in AF) $A_t(\om)$, $t\ge0$, $\om\in\Om$ is a $[-\infty,+\infty]$-valued function such that the following hold.
\begin{itemize}
\item For each $t\ge0$, $A_t(\cdot)$ is $\cF_t$-measurable.
\item There exist a set $\Lm\in\cF_\infty$ and an exceptional set $N$ such that $P_x(\Lm)=1$ for all $x\in K\setminus N$, $\theta_t(\Lm)\subset \Lm$ for all $t>0$, and for each $\om\in\Lm$ the following hold.
\begin{itemize}
\item $A_\cdot(\om)$ is right continuous and has the left limit on $[0,\zt(\om))$, where $\zt$ is the life time.
\item $A_0(\om)=0$, $|A_t(\om)|<\infty$ for $t<\zt(\om)$, and $A_t(\om)=A_{\zt(\om)}(\om)$ for $t\ge\zt(\om)$.
\item $A_{t+s}(\om)=A_s(\om)+A_t(\theta_s(\om))$ for $t,s\ge0$.
\end{itemize}
\end{itemize}
The sets $\Lm$ and $N$ are called a defining set and an exceptional set of the AF $A_t(\om)$, respectively.
Two AFs $A^{(1)}$ and $A^{(2)}$ are called \emph{equivalent} if $P_x(A^{(1)}_t=A^{(2)}_t)=1$ for q.e.\,$x\in K$ for each $t>0$. 
We identify equivalent AFs.

An AF $A_t(\om)$ is called a \emph{positive} (resp.\ \emph{continuous}) additive functional if there exists a defining set $\Lm$ satisfying $A_t(\om)\in[0,+\infty]$ for all $t\in[0,\infty)$ (resp.\ $A_t(\om)$ is continuous on $[0,\infty)$) for $\om\in\Lm$. 
A positive continuous additive functional is abbreviated as PCAF.
Any PCAF $A$ admits its \emph{Revuz measure} $\mu_A$. That is, there exists a unique smooth measure $\mu_A$ on $K$ such that for any $t>0$ and any non-negative Borel functions $f$ and $h$ on $K$,
\[
\int_K h(x) E_x\biggl[\int_0^t f(X_s)\,\dd A_s\biggr]\,\mu(\dd x)=\int_0^t \int_K f(x) E_x[h(X_s)]\,\mu_A(\dd x)\,\dd s. 
\]
(See \cite[Section~5.1]{FOT11}.)

Let $\cM$ be the space of all finite c\`adl\`ag additive functionals $M$ such that for each $t>0$, $E_x[M_t^2]<\infty$ and $E_x[M_t]=0$ q.e.\,$x\in K$.
An element of $\cM$ is called a martingale additive functional (MAF for short).
From the assumption of strong locality of $(\cE,\cF)$, every element of $\cM$ is in fact a continuous additive functional by \cite[Lemma~5.5.1~(ii)]{FOT11}.

Each $M\in\cM$ admits a unique PCAF $\la M \ra$ such that $E_x[\la M\ra_t]=E_x[M_t^2]$ for q.e.\,$x\in K$ for each $t>0$.
For $M,L\in\cM$, let
\[
\la M,L\ra_t=\frac12(\la M+L\ra_t-\la M\ra_t-\la L\ra_t)
\]
and define a signed measure $\mu_{\la M,L\ra}$ on $K$ as
\[
\mu_{\la M,L\ra}=\frac12(\mu_{\la M+L\ra}-\mu_{\la M\ra}-\mu_{\la L\ra}).
\]

For $M\in\cM$, the energy $e(M)$ of $M$ is defined as
\[
e(M)=\lim_{t\downarrow0}\frac1{2t}E_\mu[M_t^2]\left(=\sup_{t>0}\frac1{2t}E_\mu[M_t^2]\le\infty\right).
\]
Here, $E_\mu$ denotes the integration with respect to $P_\mu$.
We set $\maruM=\{M\in\cM\mid e(M)<\infty\}$.
By letting 
\[
e(M,M')=\lim_{t\downarrow0}\frac1{2t}E_\mu[M_t M'_t],\qquad M,M'\in\maruM,
\]
$(\maruM,e)$ becomes a real Hilbert space by \cite[Theorem~5.2.1]{FOT11}.
Here, two elements of $\maruM$ are identified if they are equivalent. 
Each $M\in \maruM$ associates a finite Borel measure $\mu_{\la M\ra}$ on $K$ in the Revuz correspondence with the quadratic variation $\la M\ra$ of $M$.
For $M\in \maruM$ and $f\in L^2(K,\mu_{\la M\ra})$, we can define the \emph{stochastic integral} $f\bullet M\in\maruM$ of $f$ with respect to $M$, which is characterized by the identity
\[
  e(f\bullet M,L)=\frac12\int_K f(x)\,\mu_{\la M,L\ra}(\dd x)
  \quad \text{for all }L\in\maruM.
\]
See \cite[Section~5.6]{FOT11} for further details.

We now define the concept of AF-martingale dimensions.
\begin{definition}[{\cite[Definition~3.3]{Hi10}}]
The \emph{AF-martingale dimension} $d_{\mathrm{m}}$ of $\bfM$ or of $(\cE,\cF)$ is the smallest number $d$ such that, there exist $d$ elements $M^{(1)},M^{(2)},\dots,M^{(d)}$ in $\maruM$ such that each $M\in\maruM$ is expressed as
\[
M=\sum_{i=1}^d (\ph^{(i)}\bullet M^{(i)})
\]
for some $\ph^{(i)}\in L^2(K,\mu_{\la M^{(i)}\ra})$, $i=1,2,\dots,d$.
If such a number does not exist, $d_{\mathrm{m}}$ is defined as $\infty$. Moreover, $d_{\mathrm{m}}$ is defined as $0$ if $\maruM=\{0\}$.
\end{definition}
The AF-martingale dimension informally represents the number of independent noises contained in the diffusion process. It is not obvious to know its concrete value from the definition. We introduce an analytic characterization of the AF-martingale dimension.
For a bounded function $f$ in $\cF$, there exists a unique finite Borel measure $\nu_f$ on $K$ satisfying
\[
  \int_K \ph\,\dd\nu_f=2\cE(f\ph,f)-\cE(\ph,f^2)\quad
  \text{for all }\ph\in\cF\cap C_c(K).
\]
For general $f\in\cF$, we define $\nu_f$ by $\nu_f(B)=\lim_{n\to\infty}\nu_{f_n}(B)$ for Borel sets $B$, where $f_n=(-n)\vee f\wg n$. 
We call $\nu_f$ the energy measure of $f$ (see \cite[Section~3.2]{FOT11}), which plays a central role in the subsequent arguments. For $f,g\in\cF$, the mutual energy measure $\nu_{f,g}$ is a signed measure on $K$ that is defined as
\[
\nu_{f,g}=\frac12(\nu_{f+g}-\nu_{f}-\nu_g).
\]

For a Borel measure $\nu$ on $K$ and a (signed) measure $\hat\nu$ on $K$, $\hat\nu\ll\nu$ means that $\hat\nu$ is absolutely continuous with respect to $\nu$, that is, $\nu(B)=0$ impies $\hat\nu(B)=0$ for Borel subsets $B$ of $K$.
We now recall the following proposition.
\begin{proposition}[{\cite[Lemma~2.3]{Hi10}, see also \cite[Lemma~2.2]{Na85}}]
There exists a (finite) Borel measure $\nu$ on $K$ such that the following holds:
\begin{enumerate}
\item For every $f\in\cF$, $\nu_f\ll\nu$.
\item If another measure $\nu'$ satisfies {\rm(i)} with $\nu$ replaced by $\nu'$, then $\nu\ll\nu'$.
\end{enumerate}
\end{proposition}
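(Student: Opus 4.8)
The plan is to realize the minimal dominating measure $\nu$ as a countable convex combination of energy measures taken over a dense subset of $\cF$, exploiting the separability of $\cF$ together with the continuity of the map $f\mapsto\nu_f$. Two standard facts set this up. First, the Hilbert space $(\cF,(\cdot,\cdot)_\cF)$ is separable: since $(f,g)_\cF=\int_K\bigl((1-L)^{1/2}f\bigr)\bigl((1-L)^{1/2}g\bigr)\,\dd\mu$, the operator $(1-L)^{-1/2}$ is a unitary map from $L^2(K,\mu)$ onto $(\cF,(\cdot,\cdot)_\cF)$, and $L^2(K,\mu)$ is separable because $K$ is second countable; hence $\cF$ admits a countable dense subset $\{f_n\}_{n\ge1}$. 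Second, by strong locality each energy measure is finite with $\nu_h(K)=2\cE(h,h)$, and the energy measures obey the Cauchy--Schwarz inequality $|\nu_{f,g}(B)|\le\nu_f(B)^{1/2}\nu_g(B)^{1/2}$ for every Borel set $B$ (see \cite[Section~3.2]{FOT11}).

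Expanding $\nu_{f+g}(B)=\nu_f(B)+2\nu_{f,g}(B)+\nu_g(B)$ with this inequality yields the triangle inequality $\nu_{f+g}(B)^{1/2}\le\nu_f(B)^{1/2}+\nu_g(B)^{1/2}$, whence, for all $f,g\in\cF$ and all Borel $B$,
\[
\bigl|\nu_f(B)^{1/2}-\nu_g(B)^{1/2}\bigr|\le\nu_{f-g}(B)^{1/2}\le\nu_{f-g}(K)^{1/2}=\bigl(2\cE(f-g,f-g)\bigr)^{1/2}.
\]
This last estimate is the crucial continuity property: $f\mapsto\nu_f(B)^{1/2}$ is Lipschitz with respect to $\cE^{1/2}$, uniformly in $B$, so that $\cE$-convergence $f_{n_k}\to f$ forces $\nu_{f_{n_k}}(B)\to\nu_f(B)$ simultaneously over all Borel sets.

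With these in hand I would set
\[
\nu=\sum_{n=1}^\infty 2^{-n}\bigl(1+2\cE(f_n,f_n)\bigr)^{-1}\nu_{f_n},
\]
which is a finite Borel measure since $\nu(K)\le\sum_{n\ge1}2^{-n}=1$. Because every coefficient is strictly positive and each $\nu_{f_n}$ is non-negative, a Borel set $B$ satisfies $\nu(B)=0$ if and only if $\nu_{f_n}(B)=0$ for every $n$. To prove (i), fix $f\in\cF$ and a Borel set $B$ with $\nu(B)=0$; choosing a subsequence $f_{n_k}\to f$ in $\cF$ and applying the continuity estimate gives $\nu_f(B)^{1/2}=\lim_{k\to\infty}\nu_{f_{n_k}}(B)^{1/2}=0$, so $\nu_f\ll\nu$. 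Claim (ii) is then immediate from the construction: if $\nu'$ satisfies (i), then $\nu'(B)=0$ forces $\nu_{f_n}(B)=0$ for all $n$ and hence $\nu(B)=0$, that is, $\nu\ll\nu'$.

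The only genuinely nontrivial ingredient is the uniform-in-$B$ continuity of $f\mapsto\nu_f$, which is exactly what reduces the verification of (i) from the entire space $\cF$ to the countable dense family; the remaining steps are bookkeeping built into the choice of positive weights. I therefore expect the point requiring care to be the Cauchy--Schwarz and triangle inequalities for energy measures at the level of arbitrary Borel sets (rather than merely integrated against continuous test functions), together with the identity $\nu_h(K)=2\cE(h,h)$ that guarantees summability of the series defining $\nu$.
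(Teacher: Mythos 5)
Your proof is correct and is essentially the standard argument of the cited reference \cite[Lemma~2.3]{Hi10} (the paper itself states this proposition without proof, by citation only): a positively weighted countable sum of energy measures over a dense sequence in $\cF$, combined with the uniform-in-$B$ estimate $\bigl|\nu_f(B)^{1/2}-\nu_g(B)^{1/2}\bigr|\le\nu_{f-g}(K)^{1/2}$, which is exactly Proposition~\ref{prop:sl}~(i) together with (iii). No gaps.
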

Such a measure $\nu$ is referred to as the \emph{minimal energy-dominant measure} (\cite[Definition~2.1]{Hi10}).
It is easy to see that $\nu_{f,g}\ll\nu$ for any $f,g\in\cF$.
\begin{definition}[{\cite[Definition~2.9]{Hi10}}]
The \emph{index} of $(\cE,\cF)$ is defined as the smallest integer $p$ such that for every $N\in\N$ and $f_1,\dots,f_N\in\cF$,
\[
\rank\left(\frac{\dd\nu_{f_i,f_j}}{\dd\nu}(x)\right)_{i,j=1}^N\le p,\quad \nu\text{-a.e.}\,x.
\]
If such $p$ does not exist, the index is defined as $\infty$.
\end{definition}
It is evident that the index is defined independently of the choice of a minimal energy-dominant measure.

The following key result links the analytic structure of Dirichlet forms to probabilistic martingale dimensions.
\begin{theorem}[{\cite[Theorem~3.4]{Hi10}}]\label{th:char}
The AF-martingale dimension of $\bfM$ coincides with the index of $(\cE,\cF)$.
\end{theorem}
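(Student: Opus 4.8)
The plan is to pass between the probabilistic object $\maruM$ and the analytic object $\cF$ through the Fukushima decomposition, to realize both as $L^2$-sections of a common measurable Hilbert bundle over $(K,\nu)$, and then to reduce the equality of the two dimensions to a statement about the minimal number of generating sections of such a bundle.

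First I would set up the bridge between functions and martingale additive functionals. For bounded $f\in\cF$, the strongly local Fukushima decomposition produces a unique $M^{[f]}\in\maruM$ whose quadratic variation has Revuz measure equal to the energy measure, namely $\mu_{\la M^{[f]}\ra}=\nu_f$, and more generally $\mu_{\la M^{[f]},M^{[g]}\ra}=\nu_{f,g}$. Two facts are crucial here: (i) the functional MAFs $\{M^{[f]}\mid f\in\cF,\ f\text{ bounded}\}$ are dense in $(\maruM,e)$; and (ii) every measure $\mu_{\la M\ra}$, $M\in\maruM$, is absolutely continuous with respect to the minimal energy-dominant measure $\nu$ (this follows from (i) together with the defining property of $\nu$ and the stochastic-integral identity $\mu_{\la f\bullet M\ra}=f^2\,\mu_{\la M\ra}$). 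Consequently all the mutual variation measures that appear satisfy $\mu_{\la M,L\ra}\ll\nu$, and the energy pairing becomes $e(M,L)=\tfrac12\int_K (\dd\mu_{\la M,L\ra}/\dd\nu)\,\dd\nu$.

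Next I would build the measurable Hilbert bundle. Fix a countable family $\{f_k\}$ dense in $\cF$; the densities $G_{kl}(x)=(\dd\nu_{f_k,f_l}/\dd\nu)(x)$ form a measurable field of non-negative definite Gram matrices. For $\nu$-a.e.\ $x$ let $H_x$ be the Hilbert space obtained by completing the finite real sequences modulo the null space of $G(x)$, so that $\dim H_x=\sup_N \rank(G_{kl}(x))_{k,l\le N}$. Since $\rank(\dd\nu_{f_i,f_j}/\dd\nu)_{i,j=1}^N(x)$ is exactly the dimension of the span of $[f_1]_x,\dots,[f_N]_x$ in $H_x$, the index of $(\cE,\cF)$ equals $\nuesssup_x \dim H_x$. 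Using the identity for $e$ above, the assignment $M^{[f]}\mapsto (x\mapsto [f]_x)$ extends by density, and by the rule sending $f\bullet M$ to the section $f\cdot(\text{section of }M)$, to an isometry $\maruM\cong L^2(K,\nu;H_\bullet)$ under which the stochastic integral becomes pointwise scalar multiplication of sections. Thus $M=\sum_{i=1}^d(\ph^{(i)}\bullet M^{(i)})$ says precisely that $s_M(x)=\sum_i \ph^{(i)}(x)\,s_{M^{(i)}}(x)$ for $\nu$-a.e.\ $x$, so $M^{(1)},\dots,M^{(d)}$ represent all of $\maruM$ if and only if $\{s_{M^{(i)}}(x)\}_{i=1}^d$ spans $H_x$ for $\nu$-a.e.\ $x$. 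The lower bound $d_{\mathrm m}\ge p$ is then immediate: on the positive-$\nu$-measure set where $\dim H_x=p$, fewer than $p$ vectors cannot span $H_x$. For the upper bound $d_{\mathrm m}\le p$, I would construct $p$ measurable sections spanning $H_x$ a.e.\ by a measurable frame/selection argument on the level sets $\{x\mid \dim H_x=j\}$, $j\le p$, realize them as elements of $\maruM$, and recover the integrands from the fiberwise (pseudo-inverse of the) Gram matrix.

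The main obstacle is this last step: producing the generating MAFs as honest elements of $\maruM$ and checking that the integrands $\ph^{(i)}$ obtained by fiberwise linear algebra are genuinely measurable and lie in the correct spaces $L^2(K,\mu_{\la M^{(i)}\ra})$, so that $M=\sum_i \ph^{(i)}\bullet M^{(i)}$ is an \emph{exact} identity in $\maruM$ and not merely a dense approximation. This requires care with the measurable selection of bases where the fiber dimension jumps, with the integrability estimate derived from $s_M\in L^2(K,\nu;H_\bullet)$ together with uniform boundedness of the pseudo-inverse on each level set, and with the possibility that the index is infinite, in which case one must instead show that no finite generating family can exist.
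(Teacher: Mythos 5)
The paper does not actually prove this statement: it is imported verbatim from \cite[Theorem~3.4]{Hi10}, so there is no in-paper proof to compare against. Your outline reproduces the strategy of the proof in that reference --- the Fukushima decomposition linking $M^{[f]}$ to $\nu_f$, absolute continuity of every $\mu_{\la M\ra}$ with respect to the minimal energy-dominant measure $\nu$, the Gram-matrix rank argument giving $d_{\mathrm{m}}\ge p$, and a measurable-selection construction of $p$ generating MAFs giving $d_{\mathrm{m}}\le p$ --- so the approach is the correct one and the acknowledged obstacles (measurable frames on the level sets of $\dim H_x$, the infinite-index case) are exactly where the real work lies. One caution: your ``fact (i)'' as stated is false --- the set $\{M^{[f]}\mid f\in\cF,\ f\text{ bounded}\}$ is not dense in $(\maruM,e)$ as a linear subspace (already for Brownian motion on $\R^d$ its closed linear span corresponds to gradient fields only, missing the divergence-free part of $L^2(\R^d;\R^d)$); what \cite[Lemma~5.6.3]{FOT11} provides is density of the finite sums $\sum_i g_i\bullet M^{[f_i]}$, i.e.\ density of the module generated by the $M^{[f]}$ under stochastic integration, which is the statement your isometry with $L^2(K,\nu;H_\bullet)$ actually needs and which your subsequent construction implicitly uses.
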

From the next section, we treat the index of $(\cE,\cF)$, not the AF-martingale dimension itself.

\section{Main result and preliminary arguments}\label{sec:results}
In this section, we introduce the analytic framework required to state and prove our main result, which identifies sufficient conditions on energy measures and relative capacities ensuring that the AF-martingale dimension collapses to one. We begin by recalling some auxiliary notions, and then formulate the key assumptions.

A real function $f$ on $K$ is called quasi-continuous (resp.\ quasi-continuous in the restricted sense) if, for any $\eps>0$, there exists an open set $G$ of $K$ such that $\Cp_1(G)<\eps$ and $f|_{K\setminus G}$ is continuous (resp.\ $f|_{K_\Delta\setminus G}$ is continuous by letting $f(\Delta)=0$). From \cite[Theorem~2.1.3]{FOT11}, each $f\in\cF$ admits a quasi-continuous $\mu$-modification $\tilde f$ in the sense that $f=\tilde f$ $\mu$-a.e.\ and $\tilde f$ is quasi-continuous in the restricted sense. In what follows, when we consider functions in $\cF$, \emph{we assume that they are always taken to be a quasi-continuous $\mu$-modification}.
We recall the following fact.
\begin{lemma}[{\cite[Lemma~2.1.4]{FOT11}}]\label{lem:aeqe}
Let $U$ be an open subset of $K$ and $f$ be a quasi continuous function on $U$. If $f\ge0$ $\mu$-a.e.\ on $U$, then $f\ge0$ q.e.\ on $U$.
\end{lemma}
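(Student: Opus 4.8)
The plan is to prove the statement directly in its capacitary form. Writing $N:=\{x\in U:f(x)<0\}$ for the fixed quasi-continuous version of $f$, I would show $\Cp_1(N)=0$; since (as recalled before the lemma) capacity-zero sets are exactly the exceptional sets, this is precisely the assertion that $f\ge0$ q.e.\ on $U$. First I would exploit quasi-continuity scale by scale to remove all continuity issues: fixing $\eps>0$, for each $k\in\N$ choose an open set $G_k$ with $\Cp_1(G_k)<\eps 2^{-k}$ such that $f|_{U\setminus G_k}$ is continuous. Every point of $N$ either lies in $\bigcap_k G_k$ or, for some $k$, lies in $U\setminus G_k$; hence $N\subset\bigl(\bigcap_k G_k\bigr)\cup\bigcup_k N_k$, where $N_k:=\{x\in U\setminus G_k:f(x)<0\}$. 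Since $\bigcap_k G_k\subset G_j$ for every $j$, monotonicity of capacity gives $\Cp_1(\bigcap_k G_k)\le\inf_j\Cp_1(G_j)=0$, so the whole problem reduces to estimating $\Cp_1(N_k)$.

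The crux is the bound $\Cp_1(N_k)<\eps 2^{-k}$, and here the main obstacle is that $N_k$ is only \emph{relatively} open in $U\setminus G_k$, not open in $K$, so the definition of $\Cp_1$ cannot be applied to it naively. By continuity of $f$ on $U\setminus G_k$ the set $N_k$ is relatively open in $U\setminus G_k$, so $N_k=W_k\setminus G_k$ for some open set $W_k\subset U$. The key observation is that $W_k$ differs from $N_k$ only inside $G_k$, and that $N_k$ is $\mu$-negligible: indeed $N_k\subset\{x\in U:f(x)<0\}$, which is $\mu$-null because $f\ge0$ $\mu$-a.e.\ on $U$. Now let $u_k\in\cF$ be a near-minimizer witnessing $\Cp_1(G_k)<\eps 2^{-k}$, so that $u_k\ge1$ $\mu$-a.e.\ on some open set $O_k\supset G_k$ and $\cE(u_k,u_k)+\int_K u_k^2\,\dd\mu<\eps 2^{-k}$. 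On the open set $W_k\supset N_k$ we then have $u_k\ge1$ $\mu$-a.e.: on $W_k\cap G_k\subset O_k$ this holds by the choice of $u_k$, while $W_k\setminus G_k=N_k$ has $\mu$-measure zero. Thus $u_k$ is admissible in the definition of $\Cp_1(N_k)$, and therefore $\Cp_1(N_k)\le\cE(u_k,u_k)+\int_K u_k^2\,\dd\mu<\eps 2^{-k}$.

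Finally I would assemble the pieces using the countable subadditivity of $\Cp_1$ (a standard property of the $1$-capacity): $\Cp_1(N)\le\Cp_1\bigl(\bigcap_k G_k\bigr)+\sum_{k}\Cp_1(N_k)<0+\sum_k\eps 2^{-k}=\eps$. Since $\eps>0$ is arbitrary, $\Cp_1(N)=0$, i.e.\ $f\ge0$ q.e.\ on $U$. The only genuinely delicate point is the reuse of the potential $u_k$ in the second paragraph: one must notice that the measure-theoretic hypothesis ($f\ge0$ $\mu$-a.e.) upgrades to a capacitary conclusion precisely because the relatively open set $N_k$ fattens to the honestly open set $W_k$ without changing it off the small set $G_k$, so that the capacity estimate for $G_k$ transports to $N_k$. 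Everything else—the scalewise use of quasi-continuity, monotonicity and subadditivity of $\Cp_1$, and the identification of capacity-zero sets with exceptional (hence $\mu$-null) sets—is routine.
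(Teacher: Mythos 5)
Your argument is correct. The paper itself gives no proof of this lemma---it is quoted directly from \cite[Lemma~2.1.4]{FOT11}---and your proof is essentially the standard one found there: use quasi-continuity to write the bad set $\{f<0\}\cap U$ as a union of a small-capacity piece and pieces that are relatively open in the complement of a small-capacity open set, observe that the latter are $\mu$-null and hence that the near-minimizer for the small open set remains admissible for them, and conclude by monotonicity and countable subadditivity of $\Cp_1$. The one genuinely delicate step---fattening the relatively open set $N_k$ to the honestly open set $W_k$ and checking that $u_k\ge1$ $\mu$-a.e.\ there because $W_k\setminus G_k=N_k$ is $\mu$-null---is handled correctly. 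A minor simplification: the dyadic sequence $\{G_k\}_{k}$ is not needed; a single open set $G$ with $\Cp_1(G)<\eps$ already gives $\Cp_1(N)\le\Cp_1(G)+\Cp_1(N_G)<2\eps$ by the same reasoning, but this is purely cosmetic.
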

 From this lemma, if $\tilde f$ and $\hat f$ are both quasi-continuous $\mu$-modifications of a function $f$ on $K$, then $\tilde f=\hat f$ q.e.

A function $h$ in $\cF$ is called \emph{harmonic} on an open subset $U$ of $K$ if $h$ attains the infimum of 
\[
\inf\{\cE(f,f)\mid f\in\cF,\ f=h\text{ q.e.\ on }K\setminus U\}.
\]
The totality of functions in $\cF$ that are harmonic on $U$ is denoted by $\cH_U$.

For an open subset $U$ of $K$ and a subset $V$ of $U$, the relative capacity $\Cp(V;U)$ is defined as
\[
\Cp(V;U)=\inf\left\{\cE(f,f)\mathrel{}\middle|\mathrel{}\parbox{0.6\textwidth}{$f\in\cF$, $f\ge1$ $\mu$-a.e.\ on some open set $V'$ with $V'\supset V$, and $f=0$ q.e.\ on $K\setminus U$}\right\}.
\]

For a Borel function $f$ on $K$ and a Borel subset $A$ of $K$ with $\mu(A)>0$, we define the $\mu$-oscillation of $f$ on $A$ by
\[
\muosc_A f=\muesssup_A f-\muessinf_A f.
\]

In the following, we fix a minimal energy-dominant measure $\nu$.

A collection of subsets $\{U_k\}_{k\in\Lm}$ of $K$ is called a \emph{partition} of $K$ if the following hold.
\begin{enumerate}
\item Each $U_k$ is a relatively compact open set of $K$.
\item The sets $\{U_k\}_{k\in \Lm}$ are disjoint in $k$ and $(\mu+\nu)\Bigl(K\setminus \bigsqcup\nolimits_{k\in \Lm}U_k\Bigr)=0$.
\end{enumerate}
We introduce the following assumptions.
These conditions are local in nature and do not require precise geometric structures of the underlying space $K$.
\begin{assumption}\label{assumption}
\begin{enumerate}[({A}1)]
\item[(A1)] $\cE\not\equiv0$. That is, there exists $f\in\cF$ such that $\cE(f,f)>0$.
\item[(A2)] There exists a sequence $\{U_k^{(1)}\}_{k\in \Lm_1}$, $\{U_k^{(2)}\}_{k\in \Lm_2}$, $\{U_k^{(3)}\}_{k\in \Lm_3},\dots$ of partitions of $K$ such that the following hold.
\begin{enumerate}
\item For each $n\in\N$, $\{U_k^{(n+1)}\}_{k\in\Lm_{n+1}}$ is a refined partition of $\{U_k^{(n)}\}_{k\in\Lm_n}$ in the sense that, for each $k\in\Lm_{n+1}$, $U_k^{(n+1)}\subset U_{k'}^{(n)}$ for some $k'\in\Lm_n$.
\item The $\sg$-field $\cB$ generated by $\{U_k^{(n)}\mid n\in\N,\ k\in \Lm_n\}$ coincides with the Borel $\sg$-field $\cB(K)$ of $K$ up to $(\mu+\nu)$-null sets. That is, for any $A\in\cB(K)$, 
\begin{equation}\label{eq:triangle}
\text{there exists $A'\in\cB$ such that $A\triangle A'$ is $(\mu+\nu)$-null.}
\end{equation}
\item For any compact subset $S$ of $K$, there exists $n\in \N$ such that 
\begin{equation}\label{eq:finite}
\mu\left(\bigcup_{k\in\Lm_n;\; U_k^{(n)}\cap S\ne\emptyset}U_k^{(n)}\right)<\infty.
\end{equation}
\end{enumerate}
\item[(A3)](Energy--relative capacity balance) There exists $C>0$ such that the following hold: For each $n\in \N$ and $k\in \Lm_n$, there exists a closed subset $V_k^{(n)}$ of $U_k^{(n)}$ such that for any $h\in\cH_{U_k^{(n)}}$,
\begin{enumerate} 
\item $\nu_h(U_k^{(n)})\le C\nu_h(V_k^{(n)})$,
\item $\Cp(V_k^{(n)};U_k^{(n)})\left(\muosc_{U_k^{(n)}} h\right)^2\le C \nu_h(U_k^{(n)})$,
\item $\Cp(\{x\};U_k^{(n)})\left(\muosc_{U_k^{(n)}} h\right)^2\ge C^{-1} \nu_h(U_k^{(n)})$ for every $x\in V_k^{(n)}$.
\end{enumerate}
\end{enumerate}
\end{assumption}
See Fig.~\ref{fig:diagram} for a schematic diagram of the partitioning structure assumed in Assumption~\ref{assumption}.
\begin{figure}
\centering
\includegraphics[width=0.45\textwidth]{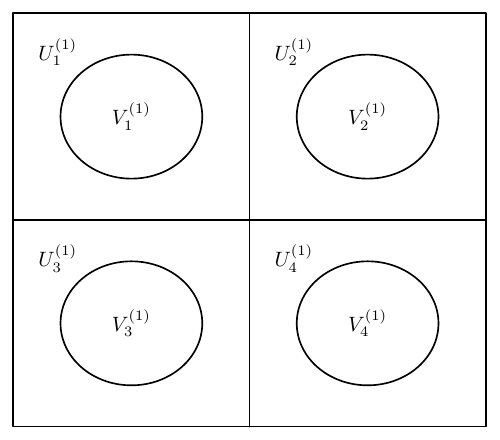}
\quad
\includegraphics[width=0.45\textwidth]{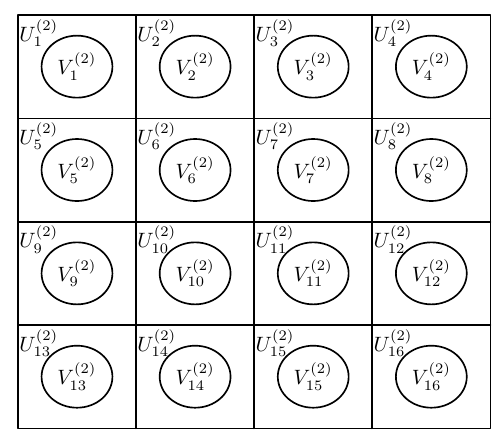}
\caption{Hierarchical partitions used in (A2). Left: Coarse partition $\{U_k^{(1)}\}$ with inner subsets $\{V_k^{(1)}\}$. Right: A finer subdivision $\{U_k^{(2)}\}$ with inner subsets $\{V_k^{(2)}\}$.
Although the sets appear similar, no geometric self-similarity is actually assumed.}
\label{fig:diagram}
\end{figure}
The main result in this paper is stated as follows.
\begin{theorem}\label{th:main}
Under {\rm(A1)--(A3)} in Assumption~\ref{assumption},
the AF-martingale dimension $d_\mathrm{m}$ is equal to one.
\end{theorem}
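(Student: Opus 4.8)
The plan is to prove, via Theorem~\ref{th:char}, the equivalent statement that the index of $(\cE,\cF)$ equals one. The lower bound is immediate: by (A1) there is some $f\in\cF$ with $\nu_f(K)=\cE(f,f)>0$, so $d\nu_f/d\nu>0$ on a set of positive $\nu$-measure and the index is at least one. The substance is the upper bound. Since each Radon--Nikodym matrix $(d\nu_{f_i,f_j}/d\nu)$ is symmetric and non-negative definite, the index is at most one precisely when every $2\times2$ principal minor vanishes $\nu$-a.e., i.e.\ when $\frac{d\nu_f}{d\nu}\frac{d\nu_g}{d\nu}=\bigl(\frac{d\nu_{f,g}}{d\nu}\bigr)^2$ $\nu$-a.e.\ for all $f,g\in\cF$. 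I would argue by contradiction: assuming the index is at least two, there are bounded $f_1,f_2\in\cF$ and a Borel set $A$ with $\nu(A)>0$ on which the $2\times2$ matrix $Z=(d\nu_{f_i,f_j}/d\nu)$ has rank two.

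First I would fix a $\nu$-generic point $x_0\in A$ and run a blow-up along the shrinking partition elements $U^{(n)}:=U^{(n)}_{k_n}$ associated with $x_0$ in (A2). Because the generating $\sigma$-field in (A2)(b) recovers $\cB(K)$ up to null sets, martingale-type differentiation along this filtration gives $\nu_{f_i,f_j}(U^{(n)})/\nu(U^{(n)})\to Z_{ij}(x_0)$ at $\nu$-a.e.\ $x_0$. I would then replace each $f_i$ by its harmonic replacement $h_i^{(n)}\in\cH_{U^{(n)}}$ (same quasi-continuous boundary values on $K\setminus U^{(n)}$) and establish---the analytic heart of the blow-up---that passing to the harmonic part loses no infinitesimal energy, so that $\nu_{h_i^{(n)},h_j^{(n)}}(U^{(n)})/\nu(U^{(n)})\to Z_{ij}(x_0)$ as well; this amounts to showing the remainder $\nu_{f_i-h_i^{(n)}}(U^{(n)})/\nu(U^{(n)})\to0$ at $\nu$-a.e.\ $x_0$. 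After a fixed linear change of coordinates in the $(f_1,f_2)$-plane diagonalizing $Z(x_0)$, I may assume $Z(x_0)=I_2$, so that along the blow-up the two harmonic functions become energy-orthogonal with equal energy: $\nu_{h_i^{(n)}}(U^{(n)})\sim\nu(U^{(n)})$ and $\nu_{h_1^{(n)},h_2^{(n)}}(U^{(n)})=o(\nu(U^{(n)}))$.

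Next I would exploit the balance conditions (A3). Combining (A3)(b) with (A3)(c) (applied at any $x\in V^{(n)}$, together with the monotonicity $\Cp(\{x\};U^{(n)})\le\Cp(V^{(n)};U^{(n)})=:\kappa_n$) yields, for every $h\in\cH_{U^{(n)}}$ and with a constant depending only on $C$,
\[
C^{-1}\kappa_n\bigl(\muosc_{U^{(n)}}h\bigr)^2\le\nu_h(U^{(n)})\le C\,\kappa_n\bigl(\muosc_{U^{(n)}}h\bigr)^2 .
\]
Applying this to every linear combination $h_\theta^{(n)}=\cos\theta\,h_1^{(n)}+\sin\theta\,h_2^{(n)}$ and using $Z(x_0)=I_2$ shows that all directional oscillations $\muosc_{U^{(n)}}h_\theta^{(n)}$ are mutually comparable, uniformly in $\theta$ and $n$: the joint image of $\Phi^{(n)}=(h_1^{(n)},h_2^{(n)})$ is ``round'' at every scale, reflecting a genuinely two-dimensional infinitesimal spread.

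The contradiction---and the step I expect to be the main obstacle---is to show that this round, two-dimensional spread is incompatible with the \emph{pointwise} lower bound (A3)(c). The mechanism is that (A3)(c), holding at every $x\in V^{(n)}$, says each harmonic function's full energy on $U^{(n)}$ is controlled by a single scalar attached to the point $x$---morally, its oscillation measured through the equilibrium potential of $\{x\}$ relative to $U^{(n)}$---so that, combined with the concentration (A3)(a) of $\nu_{h_i^{(n)}}$ on $V^{(n)}$, the two energy-orthogonal functions $h_1^{(n)},h_2^{(n)}$ are forced to be asymptotically proportional in their energy-carrying behavior, and hence cannot remain energy-orthogonal with comparable energy. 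Making this precise requires quantifying, uniformly along the blow-up, how a single point of positive relative capacity constrains the pair $(h_1^{(n)},h_2^{(n)})$; I would do this by testing $h_i^{(n)}$ against the equilibrium potentials of points of $V^{(n)}$ and summing the resulting one-dimensional constraints against the refined partition from (A2)(a), thereby contradicting $Z(x_0)=I_2$ and completing the proof that the index, and hence $d_\mathrm{m}$, equals one.
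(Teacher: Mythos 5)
Your overall strategy---reduce to the index via Theorem~\ref{th:char}, assume the index is at least two, blow up along the shrinking partition cells of (A2) at a $\nu$-generic point, normalize the density matrix to $I_2$, and then play the pair of harmonic functions off against (A3)---is indeed the skeleton of the paper's argument. But the proposal has a genuine gap exactly where you flag ``the main obstacle,'' and the mechanism you sketch there does not work. Comparability of the directional oscillations $\muosc_{U^{(n)}}h^{(n)}_\theta$ uniformly in $\theta$ is \emph{not} incompatible with (A3): the standard Dirichlet form on $\R^2$ satisfies precisely this kind of isotropic two-dimensional spread, and the obstruction to $d_{\mathrm m}\ge2$ cannot be read off from oscillations alone. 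What the paper actually proves (Proposition~\ref{prop:key}, Steps~4--5) is that the push-forwards of the energy measures $e_k^2\nu_{h_i^{(k)},h_j^{(k)}}$ under the rescaled map $\a_k\bfh^{(k)}$ converge weakly to $\dl_{ij}\,\xi\,\dd\cL^2$ with $\xi\in L^1(\R^2)$; the absolute continuity comes from the harmonicity identity $\cE(\ph(\a_k\bfh^{(k)})e_k^2,h_i^{(k)})=0$, which in the limit says $\partial_i\kp=\hat\kp_i$ in the distribution sense, whence $\kp\ll\cL^2$ by the Bouleau--Hirsch lemma. The contradiction is then a two-dimensional capacity computation: choosing a Lebesgue point $a$ of $\xi$ with $\rho(\{a\})=0$ and testing with the logarithmic cutoffs $g_\eps(|\a_k\bfh^{(k)}-a^{(k)}|)$ shows $\a_k^{-2}\Cp(G^{(k)}_{\eps_k};U^{(n_k)}_{\lm_k})\to0$ (points are polar for an absolutely continuous energy in $\R^2$), while (A3)(c) forces this quantity to stay bounded below by $\frac{1}{4C}\nu_{h_i^{(k)}}(U^{(n_k)}_{\lm_k})$. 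Your plan of ``testing against equilibrium potentials of points of $V^{(n)}$ and summing one-dimensional constraints'' contains neither the absolute-continuity step nor the logarithmic-capacity step, and without them there is no route to a contradiction.

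A secondary but real issue is your harmonic replacement step. You propose to replace $f_i$ by $h_i^{(n)}\in\cH_{U^{(n)}}(f_i)$ and to prove $\nu_{f_i-h_i^{(n)}}(U^{(n)})/\nu(U^{(n)})\to0$ at $\nu$-a.e.\ point; this pointwise a.e.\ statement is not obviously true and you give no argument. The paper sidesteps it entirely: Proposition~\ref{prop:dense} shows $\cH_*\cap L^\infty(K,\mu)$ is dense in $\cF$, so the functions $f_i$ can be chosen \emph{already} harmonic on every cell of every sufficiently fine partition, and no replacement or energy-loss estimate is needed. I would adopt that device rather than trying to prove the differentiation claim.
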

Our analysis is confined to the case of one-dimensional martingale structures. Extending the present framework to higher-dimensional cases would require a refined treatment of energy-interaction terms, which is left for future work.
This result substantially generalizes prior results for self-similar and finitely ramified fractals. To the best of our knowledge, this provides one of the first general results that cover genuinely inhomogeneous fractal models such as inhomogeneous Sierpinski gaskets.
\begin{remark}
\begin{enumerate}
\item Among the assumptions, condition (A3) plays a central role. 
This can be viewed as a localized balance between the potential-theoretic size of subsets measured via capacity, and the distribution of energy of harmonic functions. Such a balance is expected to hold for Dirichlet forms on a class of ``analytically low-dimensional'' state spaces. See Section~\ref{sec:example} for typical examples of fractals.
From a broader perspective, this condition serves an analytic role comparable in spirit to the Poincar\'e-type inequalities in Cheeger's differential-structure theory~\cite{Ch99}, although the underlying quantities and techniques are quite different.
\item One could also consider modifying the conditions (A2) and (A3) in Assumption~\ref{assumption} with replacing $U_k^{(n)}$ and $V_k^{(n)}$ by metric balls with arbitrary center points and formulating them in such a way that the concept of partition is not used.
Such formulation is more standard in the field of analysis on metric measure spaces. There are two reasons for formulating the assumptions as (A2) and (A3) here. First, they are easier to verify in typical examples. Second, this formulation naturally reflects the inductive and scale-refining structure inherent in the proof of Theorem~\ref{th:main}, which would be obscured under metric ball-based conditions.
\end{enumerate}
\end{remark}
We prepare the proof of Theorem~\ref{th:main} in the remainder of this section.
For an open subset $U$ of $K$ and $f\in\cF$, we define
\[
\cH_U(f)=\{h\in\cH_U\mid h=f \text{ q.e.\ on $K\setminus U$}\}.
\]
In the following five propositions, we suppose that $a$, $b$, and $c$ are real numbers and $U$ is a relatively compact open subset of $K$.
\begin{lemma}\label{lem:3.1}
Suppose that $f\in\cF$ and $a\le f\le b$ q.e.\ on $K\setminus U$.
Then, there exists $h\in \cH_U(f)$ such that $a\le h\le b$ q.e.\ on $K$.
\end{lemma}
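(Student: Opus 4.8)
The plan is to realize the required $h$ as the energy minimizer with boundary data $f$, truncated to the interval $[a,b]$, and to verify that this truncation alters neither the boundary values nor the minimality. Write $\cF_{U,f}=\{g\in\cF\mid g=f\text{ q.e.\ on }K\setminus U\}$. First I would record that $\cF_{U,f}$ is nonempty (it contains $f$) and that the Dirichlet problem $m:=\inf\{\cE(g,g)\mid g\in\cF_{U,f}\}$ admits a minimizer $h_0$; this is the standard solution of the Dirichlet problem for $(\cE,\cF)$ on the relatively compact set $U$, and by definition $h_0\in\cH_U(f)$. Here the boundedness hypothesis on $f$ outside $U$ is not yet used.

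Next I set $h:=(a\vee h_0)\wg b$ and claim this is the desired function, modulo its membership in $\cF$. There are two easy points. For the boundary values: on $K\setminus U$ we have $h_0=f$ and $a\le f\le b$ q.e., so the truncation is inactive and $h=f$ q.e.\ there; hence $h\in\cF_{U,f}$ as soon as $h\in\cF$. For the energy: granting $h\in\cF$, strong locality implies that truncation does not increase the energy measure, $\nu_h\le\nu_{h_0}$, so that $\cE(h,h)=\tfrac12\nu_h(K)\le\tfrac12\nu_{h_0}(K)=\cE(h_0,h_0)$. Combining these with the minimality of $h_0$ gives $\cE(h_0,h_0)\le\cE(h,h)\le\cE(h_0,h_0)$, whence $\cE(h,h)=m$. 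Thus $h$ also attains the infimum, i.e.\ $h\in\cH_U(f)$, while $a\le h\le b$ q.e.\ on $K$ holds by construction. This is just the usual maximum principle for the Dirichlet problem.

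The main obstacle is the remaining point, namely the membership $h\in\cF$. This does \emph{not} follow directly from the Markov property, because truncation at nonzero constant levels need not preserve $\cF$: when $\mu(K)=\infty$ and $a>0$, the function $a\vee g$ already fails to lie in $L^2(K,\mu)$. The way around this is localization. Since $h_0=f$ q.e.\ off $U$ and $a\le f\le b$ there, the function $h$ differs from $h_0$ only inside the relatively compact open set $U$, where the constant levels $a,b$ can be supplied by genuine $\cF$-functions. Concretely, by regularity I would choose a cutoff $\phi\in\cF\cap C_c(K)$ with $0\le\phi\le1$ and $\phi=1$ on $\overline U$, and set $B:=h_0+\phi(b-h_0)\in\cF$ (using that $\phi h_0\in\cF$, the product of $h_0\in\cF$ with the bounded, compactly supported $\phi\in\cF$). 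Then $B=b$ on $U$ and $B\ge h_0$ on $K\setminus U$ (because $h_0=f\le b$ there), so the lattice property gives $h_0\wg B\in\cF$; and $h_0\wg B$ equals $h_0\wg b$ on $U$ and equals $h_0$ off $U$, which exhibits $h_0\wg b$ as an element of $\cF$. The symmetric construction with a lower barrier then yields $h=(h_0\wg b)\vee a\in\cF$, completing the argument.

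I expect this $\cF$-membership step—together with the strong-locality fact $\nu_h\le\nu_{h_0}$ used to bound the energy—to be the only genuinely technical part of the proof; everything else is the standard variational/maximum-principle argument for harmonic functions. The key idea to keep in mind is that the hypothesis $a\le f\le b$ q.e.\ on $K\setminus U$ is exactly what makes the truncation act only inside the relatively compact set $U$, so that the obstruction caused by constants not lying in $\cF$ can be absorbed by a cutoff localized to $U$.
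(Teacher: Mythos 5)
Your overall strategy differs from the paper's in one crucial respect: you first assert the existence of an (untruncated) energy minimizer $h_0\in\cH_U(f)$ and only then truncate, whereas the paper truncates the \emph{minimizing sequence} and passes to the limit afterwards. This order matters, and your existence step is a genuine gap. A minimizing sequence $\{g_m\}$ for $E=\inf\{\cE(g,g)\mid g=f\ \text{q.e.\ on }K\setminus U\}$ has bounded energy, but nothing in the hypotheses gives an a priori bound on $\|g_m\|_{L^2(K,\mu)}$ over $U$; since $\cF$ is a Hilbert space for the norm $(\cE(\cdot,\cdot)+\|\cdot\|_{L^2}^2)^{1/2}$, you cannot extract a weakly convergent subsequence without that bound, and no coercivity (Poincar\'e inequality, transience of the part form on $U$, spectral gap) is assumed in this abstract setting. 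The lemma is itself, in part, the existence statement for the Dirichlet problem, so calling $h_0$ ``the standard solution'' begs the question. The paper's device of replacing $g_m$ by $h_m=(a\vee g_m)\wg b$ \emph{before} taking any limit is precisely what supplies the missing $L^2$ bound: $h_m=f$ q.e.\ off $U$ and $|h_m|\le |a|\vee|b|$ on the relatively compact set $U$, so $\|h_m\|_{L^2}^2\le (a^2\vee b^2)\mu(U)+\|f\|_{L^2}^2$; Banach--Alaoglu, Banach--Saks and q.e.\ convergence of a subsequence then give existence and the bounds $a\le h\le b$ in one stroke.

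There is also a secondary, circular step in your $\cF$-membership argument: you form $B=h_0+\phi(b-h_0)$ and invoke $\phi h_0\in\cF$ as ``the product of $h_0\in\cF$ with the bounded, compactly supported $\phi$.'' The algebra property of $\cF$ (e.g.\ \cite[Theorem~1.4.2]{FOT11}) requires \emph{both} factors to be bounded, and for a strongly local form one has only $\nu_{\phi h_0}\le 2(\phi^2\nu_{h_0}+h_0^2\nu_\phi)$, where $\int h_0^2\,\dd\nu_\phi$ is uncontrolled for unbounded $h_0$ (note $\nu_\phi$ may be singular with respect to $\mu$). But boundedness of $h_0$ on $U$ is essentially what the lemma is meant to establish, so you cannot assume it. By contrast, the paper never needs to multiply an unbounded element of $\cF$ by a cutoff: the truncation $(a\vee\cdot)\wg b$ is applied directly to the minimizing sequence and handled by the Markov/normal-contraction property. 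Your remaining observations (the truncation is inactive on $K\setminus U$ because $a\le f\le b$ there, and it does not increase energy, so any truncated minimizer is again a minimizer) are correct and coincide with the mechanism the paper uses at the level of the sequence.
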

\begin{proof}
Let $E=\inf\{\cE(g,g)\mid g\in\cF\text{ and $g=f$ q.e.\ on $K\setminus U$}\}$.
Take a sequence $\{g_m\}_{m=1}^\infty$ from $\cF$ such that $g_m=f$ q.e.\ on $K\setminus U$ for each $m$ and $\lim_{m\to\infty}\cE(g_m,g_m)=E$.
Let $h_m=(a\vee g_m)\wg b$. Then, $h_m=f$ q.e.\ on $K\setminus U$ and $\lim_{m\to\infty}\cE(h_m,h_m)=E$ from the Markov property of $(\cE,\cF)$.
Moreover, $\{h_m\}_{m=1}^\infty$ is bounded in $L^2(K,\mu)$. Indeed,
\begin{align*}
\|h_m\|_{L^2(K,\mu)}^2&=\int_U h_m^2\,\dd\mu+\int_{K\setminus U} h_m^2\,\dd\mu\\
&\le (a^2\vee b^2)\mu(U)+\|f\|_{L^2(K,\mu)}^2.
\end{align*}
Therefore, $\{h_m\}_{m=1}^\infty$ is bounded in $\cF$.
By the Banach--Alaoglu theorem, we can take a subsequence of $\{h_m\}_{m=1}^\infty$ converging weakly in $\cF$.
From the Banach--Saks theorem, the Ces\`aro mean of a further subsequence converges in $\cF$. Then, a subsequence converges q.e.\ from \cite[Theorem~2.1.4]{FOT11}. Its limit $h$ satisfies that $\cE(h,h)=E$ and $a\le h\le b$ q.e. Therefore, this $h$ satisfies the desired property.
\end{proof}
The following two lemmas are standard and their proofs are omitted.
\begin{lemma}\label{lem:3.2}
Let $h\in\cF$. Then, $h$ is harmonic on $U$ if and only if $\cE(h,g)=0$ for every $g\in\cF$ with $g=0$ q.e.\ on $K\setminus U$.
\end{lemma}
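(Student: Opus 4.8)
The plan is to prove both implications by a standard first-variation argument, exploiting that $\cH_U$ arises from minimizing the quadratic form $\cE(\cdot,\cdot)$ over an affine subspace of $\cF$. The central observation I would record first is that the admissible competitors for a candidate harmonic function $h$, namely $\{f\in\cF : f = h \text{ q.e. on } K\setminus U\}$, form an affine set whose underlying space of directions is precisely $\cG_U := \{g\in\cF : g = 0 \text{ q.e. on } K\setminus U\}$: since $\cF$ is linear and q.e.\ equalities are stable under linear combinations, $f = h$ q.e.\ on $K\setminus U$ holds if and only if $f - h\in\cG_U$.

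For the forward direction I would fix $g\in\cG_U$ and vary $t\in\R$. Because $h + tg$ is again an admissible competitor, minimality of $\cE(h,h)$ yields $2t\,\cE(h,g) + t^2\,\cE(g,g)\ge0$ for all $t\in\R$, and letting $t$ approach $0$ from both sides forces $\cE(h,g) = 0$. For the reverse direction I would, given any competitor $f$, set $g := f - h\in\cG_U$ and expand $\cE(f,f) = \cE(h,h) + 2\,\cE(h,g) + \cE(g,g)$; the hypothesis kills the cross term and non-negativity of $\cE$ leaves $\cE(f,f)\ge\cE(h,h)$, so $h$ attains the infimum and is thus harmonic on $U$.

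I expect no genuine obstacle here, as this is the textbook variational characterization of harmonicity for a Dirichlet form. The only point meriting (minor) care is the quasi-continuity bookkeeping in the first step---ensuring that the relation ``$= h$ q.e.\ on $K\setminus U$'' is equivalent to the difference lying in $\cG_U$---which is justified by the standing convention that every function of $\cF$ is represented by a quasi-continuous $\mu$-modification, so that pointwise-q.e.\ relations are well defined and compatible with the vector-space operations.
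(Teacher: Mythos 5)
Your proof is correct: the paper omits the proof of this lemma as standard, and your first-variation argument (identifying the competitor class as the affine set $h+\cG_U$, then expanding $\cE(h+tg,h+tg)$ for the forward direction and $\cE(f,f)=\cE(h,h)+2\cE(h,g)+\cE(g,g)$ for the converse) is exactly the standard argument being invoked. No gaps; the quasi-continuity bookkeeping you flag is indeed handled by the paper's standing convention that elements of $\cF$ are taken in their quasi-continuous versions, so q.e.\ relations are stable under linear combinations.
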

\begin{lemma}[See, e.g., {\cite[Chapter~I, Proposition~5.1.3]{BH91}}]\label{lem:3.3}
Let $f,g\in\cF$. If $f=c$ $\mu$-a.e.\ on $U$ and $g=0$ $\mu$-a.e.\ on $K\setminus U$, then $\cE(f,g)=0$.
\end{lemma}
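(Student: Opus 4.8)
The plan is to derive the orthogonality from the strong locality of $(\cE,\cF)$ after reducing $g$ to functions supported compactly inside $U$. First I would upgrade both hypotheses to quasi-everywhere statements. Since $f$ is taken in its quasi-continuous version and $f=c$ $\mu$-a.e.\ on the open set $U$, applying Lemma~\ref{lem:aeqe} to $f-c$ and to $c-f$ gives $f=c$ q.e.\ on $U$; applying it to $g$ and $-g$ on the open set $W:=K\setminus\overline U$ gives $g=0$ q.e.\ on $W$. It then suffices to prove $\cE(f,g)=0$ knowing that $f$ is q.e.\ constant on $U$ and that $g$ q.e.\ vanishes on $W$.

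The heart of the argument is to show that $g$ lies in the $\cF$-closure $\cF_U^0$ of $\{v\in\cF\mid \supp[v]\text{ is a compact subset of }U\}$. Granting an approximating sequence $g_m\to g$ in $\cF$ with each $\supp[g_m]$ compact in $U$, for every $m$ I can interpose an open set $O_m$ with $\supp[g_m]\subset O_m\subset\overline{O_m}\subset U$, using that $U$ is relatively compact. Since $f=c$ $\mu$-a.e.\ on $O_m$, the function $f$ is constant on the neighborhood $O_m$ of $\supp[g_m]$, so strong locality yields $\cE(f,g_m)=0$. Letting $m\to\infty$ and using the Cauchy--Schwarz bound $|\cE(f,g-g_m)|\le\cE(f,f)^{1/2}\cE(g-g_m,g-g_m)^{1/2}$ then gives $\cE(f,g)=0$.

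The principal difficulty is precisely this approximation step, i.e.\ the membership $g\in\cF_U^0$, which under regularity reduces to showing that $g=0$ q.e.\ on the \emph{entire} complement $K\setminus U$. The delicate set is the boundary $\partial U=\overline U\setminus U$, which the $\mu$-a.e.\ hypothesis cannot control directly, since $\partial U$ may be $\mu$-null yet carry positive capacity. To handle it cleanly I would pass to energy measures, writing $\cE(f,g)=\tfrac12\,\nu_{f,g}(K)$ and decomposing $K=U\sqcup\partial U\sqcup W$: strong locality gives $\nu_f(U)=0$ and $\nu_g(W)=0$, so the Kunita--Watanabe inequality $|\nu_{f,g}|(A)\le\nu_f(A)^{1/2}\nu_g(A)^{1/2}$ kills the $U$- and $W$-contributions. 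For the boundary I expect a dichotomy to close the gap: at points of $\partial U$ approached from within $K\setminus U$, quasi-continuity forces $g=0$ q.e.; at the remaining ``tip'' points, which are isolated in $K\setminus U$ and hence approached only from $U$, the same quasi-continuity argument forces $f$ to be q.e.\ constant on a whole neighborhood, so that $\nu_f$ vanishes locally. Either way $\nu_{f,g}(\partial U)=0$, whence $\cE(f,g)=0$. In the paper's intended application the partition boundaries are $\nu$-null, so this final point is immediate from $\nu_{f,g}\ll\nu$.
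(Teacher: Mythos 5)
Your energy-measure strategy is the right instinct, but the boundary step contains a genuine gap: the dichotomy you propose for $\partial U$ is neither correct nor exhaustive. Consider $K=\R^2$ with the classical Dirichlet form, $B$ an open disk, $C\subset B$ a compact line segment (so $\mu(C)=0$ but $\Cp_1(C)>0$), and $U=B\setminus C$. Every point of $C\subset\partial U$ is approached from within $K\setminus U$ (through $C$ itself), yet nothing forces $g=0$ q.e.\ on $C$: a smooth $g$ with $g=1$ on $C$ and support in $B$ satisfies $g=0$ $\mu$-a.e.\ on $K\setminus U=(\R^2\setminus B)\cup C$ because $C$ is $\mu$-null. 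So your first branch fails; and these points are not isolated in $K\setminus U$, so your second (``tip'') branch does not cover them either. (The lemma survives here only because $f=c$ $\mu$-a.e.\ on the \emph{open} set $B$, hence $f=c$ q.e.\ on $B\supset C$ by Lemma~\ref{lem:aeqe} and $\nu_f$ vanishes near $C$ --- exactly the configuration your case analysis misses.) Moreover, even in the favourable branch the inference ``$g=0$ q.e.\ on the open set $W$, hence $g=0$ q.e.\ at boundary points approached from $W$'' is not automatic: Lemma~\ref{lem:aeqe} applies only on open sets, and the small-capacity open set $G$ in the definition of quasi-continuity can shield a boundary point from $W\setminus G$; excluding a capacity-positive set of shielded points requires fine-continuity/Kellogg-type input that you neither prove nor cite. (This caveat also affects your tip-point branch, which matters in this paper's examples, where one-point sets have positive capacity.) A minor further point: Proposition~\ref{prop:sl}~(iv) is stated for relatively compact open sets, so for $W=K\setminus\overline U$ you should exhaust $W$ by relatively compact open subsets before concluding $\nu_g(W)=0$.

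Note that the paper gives no proof of this lemma --- it is quoted from \cite{BH91} --- so the question is purely whether your argument closes, and the correct repair is to abandon the topological decomposition $K=U\sqcup\partial U\sqcup W$ in favour of a \emph{level-set} decomposition, which sidesteps all boundary analysis. Truncate: with $\hat f=((-n)\vee f)\wg n$, $\hat g=((-n)\vee g)\wg n$ and $n>|c|$, the product $h=(\hat f-c)\hat g=\hat f\hat g-c\hat g$ lies in $\cF$ and vanishes $\mu$-a.e.\ on all of $K$ (on $U$ because $\hat f=c$ there, on $K\setminus U$ because $\hat g=0$ there), so $h=0$ q.e.\ by Lemma~\ref{lem:aeqe}; since $\{h\ne0\}=\{f\ne c\}\cap\{g\ne0\}$ when $n>|c|$, this set is capacity-null, and energy measures, being smooth, do not charge it. Now split $K=\{f=c\}\sqcup\{f\ne c\}$: the level-set property of energy measures of strongly local forms ($\bfone_{\{f=c\}}\,\dd\nu_f=0$ and $\bfone_{\{g=0\}}\,\dd\nu_g=0$; see \cite[Theorem~4.3.8]{CF12}, which the paper itself invokes for one-point sets, or \cite[Chapter~I, Section~7.1]{BH91}) gives $\nu_f(\{f=c\})=0$ and $\nu_g(\{f\ne c\})=\nu_g(\{f\ne c\}\cap\{g\ne0\})+\nu_g(\{g=0\})=0$, so your Kunita--Watanabe step yields $|\nu_{f,g}|(K)=0$, i.e.\ $2\cE(f,g)=\nu_{f,g}(K)=0$, with the unbounded case following by the truncation limits defining $\nu_f$. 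This is essentially the functional-calculus proof behind the cited proposition of \cite{BH91}. Finally, your closing remark that the intended application only needs $\nu$-null partition boundaries is not accurate: Lemma~\ref{lem:3.3} is used through Lemma~\ref{lem:3.4} and Proposition~\ref{prop:3.5} for arbitrary relatively compact open $U$, not only for partition cells, so the general statement is genuinely needed.
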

By using these lemmas, we have the following.
\begin{lemma}\label{lem:3.4}
Let $f_1, f_2\in\cF$, $f_2=f_1+c$ $\mu$-a.e.\ on $U$, and $h_1\in \cH_U(f_1)$. Then, there exists $h_2\in \cH_U(f_2)$ such that $h_2=h_1+c$ q.e.\ on $U$.
\end{lemma}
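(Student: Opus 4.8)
The natural candidate is $h_2 := h_1 + (f_2 - f_1)$, which lies in $\cF$ as a sum of elements of $\cF$. The plan is to show that this choice belongs to $\cH_U(f_2)$ and satisfies $h_2 = h_1 + c$ q.e.\ on $U$. I would organize the verification into three ingredients: an upgrade of the hypothesis from $\mu$-a.e.\ to q.e.\ equality, the checking of the two defining identities (on $K\setminus U$ and on $U$), and the harmonicity of $h_2$.

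First I would strengthen the given hypothesis. We are only told $f_2 = f_1 + c$ $\mu$-a.e.\ on $U$, but the conclusion requires the q.e.\ statement $h_2 = h_1 + c$ on $U$, i.e.\ $f_2 - f_1 = c$ q.e.\ on $U$. To obtain this, set $w := f_2 - f_1 \in \cF$, taken in its quasi-continuous version; then $w - c$ is quasi-continuous on $U$, and since $w - c \ge 0$ $\mu$-a.e.\ on $U$ as well as $c - w \ge 0$ $\mu$-a.e.\ on $U$, applying Lemma~\ref{lem:aeqe} to each inequality yields $w = c$ q.e.\ on $U$. I expect this $\mu$-a.e.-to-q.e.\ upgrade to be the only genuinely delicate point of the argument; the remaining steps are linearity combined with the lemmas already in hand. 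With this in place, the two identities follow immediately: on $K\setminus U$ we have $h_1 = f_1$ q.e.\ (since $h_1\in\cH_U(f_1)$), so $h_2 = h_1 + (f_2-f_1) = f_2$ q.e.\ there, while on $U$ the identity just proven gives $h_2 = h_1 + c$ q.e.

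Finally I would establish harmonicity through the characterization in Lemma~\ref{lem:3.2}. For any $g\in\cF$ with $g = 0$ q.e.\ on $K\setminus U$, linearity gives $\cE(h_2, g) = \cE(h_1, g) + \cE(f_2 - f_1, g)$. The first term vanishes because $h_1$ is harmonic on $U$ (Lemma~\ref{lem:3.2}). For the second, I would invoke Lemma~\ref{lem:3.3}: the original hypothesis gives $f_2 - f_1 = c$ $\mu$-a.e.\ on $U$, and $g = 0$ q.e.\ on $K\setminus U$ implies $g = 0$ $\mu$-a.e.\ there (capacity-zero sets being $\mu$-null), so $\cE(f_2 - f_1, g) = 0$. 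Hence $\cE(h_2, g) = 0$ for every such $g$, and Lemma~\ref{lem:3.2} gives that $h_2$ is harmonic on $U$. Together with $h_2 = f_2$ q.e.\ on $K\setminus U$, this shows $h_2 \in \cH_U(f_2)$, completing the proof.
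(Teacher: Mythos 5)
Your proposal is correct and follows essentially the same route as the paper: the same candidate $h_2=h_1-f_1+f_2$, harmonicity via Lemmas~\ref{lem:3.2} and~\ref{lem:3.3}, and the upgrade from $\mu$-a.e.\ to q.e.\ equality on $U$ via Lemma~\ref{lem:aeqe}. The paper's proof is just a terser version of your argument, leaving the intermediate verifications implicit.
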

\begin{proof}
We set $h_2=h_1-f_1+f_2$.
From Lemmas~\ref{lem:3.2} and \ref{lem:3.3}, $h_2$ is harmonic on $U$.
Since $h_2=f_2$ q.e.\ on $K\setminus U$ and $h_2=h_1+c$ $\mu$-a.e.\ on $U$, $h_2$ satisfies the desired properties from Lemma~\ref{lem:aeqe}.
\end{proof}
\begin{proposition}\label{prop:3.5}
Let $f\in\cF\cap C(K)$ and $a\le f\le b$ on the boundary $\partial U$ of $U$.
Then, there exists $h\in \cH_U(f)$ such that $a\le h\le b$ q.e.\ on $U$.
\end{proposition}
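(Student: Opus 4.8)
The plan is to construct $h$ as the energy minimizer in $\cH_U(f)$ and then to establish the two-sided bound $a\le h\le b$ on $U$ by a maximum-principle argument; the decisive point is that the continuity of $f$, together with the hypothesis $a\le f\le b$ on $\partial U$, confines any violation of the bounds to a region strictly separated from $\partial U$, so that the (possibly large) exterior values of $f$ cannot propagate into $U$.

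First I would establish existence. Minimizing $\cE(g,g)$ over $\{g\in\cF\mid g=f\text{ q.e.\ on }K\setminus U\}$ by the same weak-compactness scheme as in Lemma~\ref{lem:3.1} (Banach--Alaoglu, then Banach--Saks, then passing to a q.e.-convergent subsequence) produces a minimizer $h\in\cH_U(f)$, which by first-order optimality and Lemma~\ref{lem:3.2} satisfies $\cE(h,g)=0$ for every $g\in\cF$ vanishing q.e.\ on $K\setminus U$. By the symmetry $(f,h,a,b)\mapsto(-f,-h,-b,-a)$ it suffices to prove $h\le b$ q.e.\ on $U$.

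Fix $\eps>0$ and set $g_\eps=(h-b-\eps)^+\in\cF$. Since $h=f\le b$ q.e.\ on $\partial U$, quasi-continuity of $h$ forces the quasi-open set $\{h>b+\eps\}$ to avoid $\partial U$ q.e.; it therefore splits into a piece contained in $U$ and a piece contained in the open set $K\setminus\overline U$, the two being separated by the region $\{h<b+\eps\}$, which contains $\partial U$. Using strong locality to patch across this separating region, I would decompose $g_\eps=g_\eps^{\mathrm{int}}+g_\eps^{\mathrm{ext}}$, where $g_\eps^{\mathrm{int}}\in\cF$ vanishes q.e.\ on $K\setminus U$ and $g_\eps^{\mathrm{ext}}\in\cF$ vanishes q.e.\ on $U$. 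As $g_\eps^{\mathrm{int}}$ is then an admissible test function, Lemma~\ref{lem:3.2} gives $\cE(h,g_\eps^{\mathrm{int}})=0$; on the other hand, since $g_\eps^{\mathrm{int}}=h-(b+\eps)$ on its support and vanishes elsewhere, strong locality identifies $\cE(h,g_\eps^{\mathrm{int}})$ with $\cE(g_\eps^{\mathrm{int}},g_\eps^{\mathrm{int}})$. Hence $g_\eps^{\mathrm{int}}$ has zero energy; being nonnegative and vanishing q.e.\ on the set $\{h\le b+\eps\}$, which surrounds its support within $U$, it vanishes q.e.\ on $U$. Thus $h\le b+\eps$ q.e.\ on $U$, and letting $\eps\downarrow0$ yields $h\le b$ q.e.\ on $U$; the lower bound follows by symmetry.

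The main obstacle is the decomposition step: showing that the excess $(h-b-\eps)^+$ splits into a legitimate interior test function and an exterior remainder without interaction energy. This is precisely where the hypotheses $f\in C(K)$ and $a\le f\le b$ on $\partial U$ enter, since the $\eps$-margin guarantees that $\{h>b+\eps\}$ stays at positive potential-theoretic distance from $\partial U$, and the accompanying energy identities rest on strong locality of $(\cE,\cF)$. A conceptually cleaner, though less self-contained, alternative would be the probabilistic representation $h(x)=E_x[f(X_{\tau_U})]$, where $\tau_U$ is the exit time from $U$: continuity of the sample paths places $X_{\tau_U}$ on $\partial U$, so $h(x)$ is an average of boundary values of $f$, all lying in $[a,b]$.
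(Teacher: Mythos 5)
Your strategy---construct the energy minimizer and then run a variational maximum principle by testing against the truncated excess $(h-b-\eps)^+$ restricted to $U$---is genuinely different from the paper's. The paper instead exploits the continuity of $f$ to find an open neighborhood $W$ of $\partial U$ on which $a-\eps\le f\le b+\eps$, replaces $f$ by a bounded cutoff modification $\hat f$ that agrees with $f$ on $\overline U$ and satisfies $a-\eps\le\hat f\le b+\eps$ on \emph{all} of $K\setminus U$, applies the global two-sided bound of Lemma~\ref{lem:3.1} to $\hat f$, transfers the result back to $f$ via Lemma~\ref{lem:3.4}, and finally lets $\eps\to0$ by weak compactness. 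That route needs no test-function argument and no decomposition of level sets.

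As written, your argument has genuine gaps. First, the existence step: minimizing $\cE(g,g)$ over $\{g\in\cF\mid g=f$ q.e.\ on $K\setminus U\}$ ``by the same weak-compactness scheme as in Lemma~\ref{lem:3.1}'' does not go through, because that scheme obtains $L^2$-boundedness of the minimizing sequence precisely by truncating it at the levels $a$ and $b$, which is admissible only when $a\le f\le b$ on all of $K\setminus U$; here $f$ is controlled only on $\partial U$ and may be large elsewhere, and since the part form on $U$ is not assumed transient (see the Remark following Proposition~\ref{prop:qe}) you also cannot invoke coercivity of $\cE$ on $\{u\in\cF\mid u=0$ q.e.\ on $K\setminus U\}$. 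This is exactly the difficulty the paper's cutoff $\hat f$ is built to remove. Second, and more seriously, the decomposition $g_\eps=g_\eps^{\mathrm{int}}+g_\eps^{\mathrm{ext}}$ is asserted rather than proved, and the reason you give for it (separation by the quasi-open region $\{h<b+\eps\}$) is not the mechanism that could make it work: a merely quasi-open buffer does not support the open-set patching that strong locality provides. What one actually needs is that $g_\eps$ vanishes q.e.\ on the topological boundary $\partial U$, so that $g_\eps$ belongs to the subspace of $\cF$ associated with the disjoint union $U\cup(K\setminus\overline U)$, together with a splitting lemma for that subspace over two disjoint open sets---neither of which is among the tools developed in the paper. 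Likewise, the identity $\cE(h,g_\eps^{\mathrm{int}})=\cE(g_\eps^{\mathrm{int}},g_\eps^{\mathrm{int}})$ requires locality of energy measures on the quasi-open set $\{h>b+\eps\}\cap U$, whereas Proposition~\ref{prop:sl}~(iv) gives locality only on open sets; and the final inference ``nonnegative with zero energy, hence zero q.e.\ on $U$'' needs an irreducibility property that is not assumed (without it you must instead pass to $h-g_\eps^{\mathrm{int}}$, which is again a minimizer). Each of these points is probably repairable, but each requires a nontrivial auxiliary lemma, so the proof is incomplete as it stands.
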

\begin{proof}
First, suppose that $a\le 0\le b$. Let $\eps>0$. 
We can take a relatively compact open set $W$ such that $\partial U\subset W$ and $a-\eps\le f\le b+\eps$ on $W$. 
Let $M=\sup_{x\in W}|f(x)|<\infty$. 
There exists $\ph\in\cF\cap C(K)$ such that $0\le\ph\le 1$ on $K$, $\ph=1$ on the closure $\overline U$ of $U$, and $\ph=0$ on $K\setminus (U\cup W)$.
Also, there exists $\psi\in\cF\cap C(K)$ such that $0\le\psi\le 1$ on $K$, $\psi=1$ on $U\setminus W$, and $\psi=0$ on $K\setminus U$.
Define $\hat f=\{(-M)\vee f\wedge M\}\cdot\ph\cdot(1-\psi)\in\cF$.
Then, $a-\eps\le \hat f\le b+\eps$ on $K\setminus U$.
From Lemma~\ref{lem:3.1}, there exists $\hat h\in \cH_U(\hat f)$ such that $a-\eps\le \hat h\le b+\eps$ q.e.\ on $K$.
Since $\hat f=f\ph$ on $K\setminus U$, $\hat h\in \cH_U(f\ph)$ also holds.
Since $f\ph=f$ on $U$, Lemma~\ref{lem:3.4} with $c=0$ ensures that there exists $h_\eps\in \cH_U(f)$ such that $h_\eps=\hat h$ q.e.\ on $U$. In particular, $a-\eps\le h_\eps\le b+\eps$ q.e.\ on $U$.
Since the sequence $\{h_{1/l}\}_{l=1}^\infty$ is bounded in $\cF$, there exists a subsequence that converges weakly to some $h\in\cF$. Since $h$ is also given by the Ces\`aro mean of a further subsequence, $h\in \cH_U(f)$ and $a\le h\le b$ q.e.\ on $U$.

Next, suppose $0\le a\le b$.
Take $\ph\in\cF\cap C(K)$ such that $0\le\ph\le1$ on $K$ and $\ph=1$ on $\overline U$.
Let $g=f-a\ph$. Since $0\le g\le b-a$ on $\partial U$, there exists $\hat h\in \cH_U(g)$ such that $0\le \hat h\le b-a$ q.e.\ on $U$ from the result in the first paragraph.
Let $h=\hat h+a\ph$. Then $h\in \cH_U(f)$ and $a\le h\le b$ q.e.\ on $U$.

When $a\le b\le0$, just apply the above case for $-f$ in place of $f$.
\end{proof}
For $n\in\N$, let 
\[
\cH_n=\{h\in\cF\mid \text{$h$ is harmonic on $U^{(n)}_k$ for each $k\in \Lambda_n$}\}\left(=\bigcap_{k\in\Lm_n}\cH_{U_k^{(n)}}\right)
\]
and $\cH_*=\bigcup_{n\in\N} \cH_n$.
\begin{proposition}\label{prop:dense}
$\cH_*\cap L^\infty(K,\mu)$ is dense in $\cF$.
\end{proposition}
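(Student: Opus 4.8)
The plan is to approximate a general $f\in\cF$ by its harmonic replacements on finer and finer partitions, and to control the limit by a telescoping energy identity. First I would reduce to the case where $f$ is bounded with compact support: the truncations $(-m)\vee f\wg m$ converge to $f$ in $\cF$ by the Markov property, and $\cF\cap C_c(K)$ is dense by regularity, so it suffices to approximate a bounded $f\in\cF\cap C_c(K)$. For such $f$ and each $n$, let $f_n$ be the function that coincides with $f$ q.e.\ on $F_n:=K\setminus\bigsqcup_{k}U_k^{(n)}$ and is harmonic on each cell $U_k^{(n)}$; its existence together with the bound $\|f_n\|_\infty\le\|f\|_\infty$ follows from Lemma~\ref{lem:3.1} applied cell by cell, so $f_n\in\cH_n\cap L^\infty(K,\mu)\subset\cH_*\cap L^\infty(K,\mu)$. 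Writing $\cG_n=\{v\in\cF\mid v=0\ \text{q.e.\ on }F_n\}$, we have $f-f_n\in\cG_n$, and by Lemma~\ref{lem:3.2} (applied on each cell) $\cE(f_n,v)=0$ for all $v\in\cG_n$. Since $\supp[f]$ is compact, $f_n$ is supported in the union of cells meeting $\supp[f]$, which by (A2)(c) has finite measure for large $n$; hence $\{f_n\}$ is bounded in $\cF$.

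The structural step is a telescoping identity. Because the partitions refine, $F_n\subset F_{n+1}$ and hence $\cG_{n+1}\subset\cG_n$; also $\cH_n\subset\cH_{n+1}$, since a function harmonic on a cell is harmonic on each sub-cell. Consequently $f_{n+1}-f_n=(f-f_n)-(f-f_{n+1})\in\cG_n$, and as $f_n$ is $\cE$-orthogonal to $\cG_n$ we obtain
\[
\cE(f_{n+1},f_{n+1})=\cE(f_n,f_n)+\cE(f_{n+1}-f_n,f_{n+1}-f_n).
\]
Thus $n\mapsto\cE(f_n,f_n)$ is nondecreasing and bounded by $\cE(f,f)$, and $\sum_n\cE(f_{n+1}-f_n,f_{n+1}-f_n)<\infty$; in particular $\{f_n\}$ is $\cE$-Cauchy. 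Being bounded in $\cF$, it has a weakly convergent subsequence $f_{n_j}\rightharpoonup g$, and the $\cE$-Cauchy property together with weak lower semicontinuity of $\cE$ forces $\cE(f_n-g,f_n-g)\to0$. Since the $\cG_n$ are $\cF$-closed and decreasing, the weak limit satisfies $f-g\in\bigcap_n\cG_n=:\cG_\infty$.

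It then remains to identify $g$ with $f$, that is, to prove $\cG_\infty=\{0\}$; this is the crux. An element $v\in\cG_\infty$ is quasi-continuous and vanishes q.e.\ on $F_\infty:=\bigcup_nF_n$. I would argue that $F_\infty$ is quasi-everywhere dense: if some ball were contained in $\bigcap_nO_n$ (with $O_n=\bigsqcup_kU_k^{(n)}$), it would lie in a single cell at every level, so the $\sigma$-field generated by the cells could not separate its points, whence every Borel subset of it is $(\mu+\nu)$-equivalent to $\emptyset$ or to the ball; taking two disjoint sub-balls of positive $\mu$-measure (possible since $\mu$ has full support) contradicts (A2)(b). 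Granting this density, a quasi-continuous function vanishing q.e.\ on $F_\infty$ must vanish q.e., so $\cG_\infty=\{0\}$. Hence every weak cluster point of $\{f_n\}$ equals $f$, so $f_n\rightharpoonup f$; by the Banach--Saks theorem the Ces\`aro means of a subsequence converge to $f$ in $\cF$, and these means lie in $\cH_*\cap L^\infty(K,\mu)$ because each $\cH_n$ is a linear space and the functions are uniformly bounded by $\|f\|_\infty$. This yields the density.

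The main obstacle is the last step, $\cG_\infty=\{0\}$. The delicate point is that the partition condition forces $F_n$ to be $(\mu+\nu)$-null yet it typically has positive capacity, so the vanishing of $v$ on $F_n$ carries no information at the level of $\mu$ or $\nu$ and the conclusion must be drawn entirely at the level of capacity and quasi-continuity. Converting the purely measure-theoretic generation statement (A2)(b) into the capacity-density of $F_\infty$ needed for the quasi-continuity argument is where care is required; I expect this is precisely where the positivity of the point relative capacities in (A3)(c)---which forces the form to ``see'' individual points---must be brought to bear.
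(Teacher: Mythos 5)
Your construction of the harmonic replacements, the uniform bounds via (A2)(c), and the final weak-compactness/Ces\`aro step all match the paper. But the step you yourself flag as the crux --- proving $\cG_\infty=\bigcap_n\cG_n=\{0\}$ --- is a genuine gap, and the argument you sketch for it does not close. Topological density of $F_\infty=\bigcup_n F_n$ (even granting your connectedness argument for balls, which already needs care in a general metric space) is not enough to conclude that a \emph{quasi-continuous} $v$ vanishing q.e.\ on $F_\infty$ vanishes q.e.: quasi-continuity only gives continuity of $v$ off an open set $G$ of small capacity, and nothing in (A2) prevents the relevant part of the $(\mu+\nu)$-null set $F_\infty$ near a given point from being swallowed by $G$. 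What you would need is a capacitary density of $F_\infty$, which is not available from the purely measure-theoretic condition (A2)(b). Your suggestion to invoke (A3)(c) is also structurally problematic: this proposition is used inside Proposition~\ref{prop:key}, whose hypotheses deliberately exclude (A3)(c), so the density statement must be proved without it.

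The paper identifies the limit by a different and more elementary mechanism that bypasses $\cG_\infty$ entirely. Using Proposition~\ref{prop:3.5}, the harmonic replacement on each cell is chosen to satisfy the maximum principle
\[
\inf_{\partial U^{(n)}_k}f\;\le\;h^{(n)}\;\le\;\sup_{\partial U^{(n)}_k}f
\quad\text{$\mu$-a.e.\ on }U^{(n)}_k
\]
(with the $\mu$-average of $f$ on cells with empty boundary). Condition (A2)(b) is then used only to show that the diameters of the cells $U^{(n)}_{k_n}$ containing a fixed point shrink to $0$; combined with the continuity of $f\in\cF\cap C_c(K)$, this yields $h^{(n)}\to f$ pointwise $\mu$-a.e., which together with boundedness in $\cF$ identifies the weak limit as $f$. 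No orthogonality or telescoping identity is needed, and no capacitary density of $F_\infty$ enters. If you want to salvage your route, you would have to either prove the capacitary density of $F_\infty$ under (A1)--(A2) alone (which appears to require extra hypotheses) or import the maximum-principle control, at which point you have essentially reproduced the paper's proof.
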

\begin{proof}
Let $f\in\cF\cap C_c(K)$. It suffices to prove that $f$ can be approximated in $\cF$ by elements of $\cH_*\cap L^\infty(K,\mu)$.
Let $n_0\in\N$ be taken so that \eqref{eq:finite} holds with $S=\supp f$ and $n$ replaced by $n_0$. Let $n\ge n_0$. 
Note that $\Lm_n$ is at most countable. Since the situation is simpler when $\Lm_n$ is a finite set, we discuss the case when $\Lm_n$ is an infinite set. We may suppose $\Lm_n=\N$.
For each $k\in \N$, we take $g_k\in \cH_{U^{(n)}_k}(f)$ such that
\[
\inf_{\partial U^{(n)}_k}f\le \muessinf_{U^{(n)}_k}g_k\le \muesssup_{U^{(n)}_k}g_k\le \sup_{\partial U^{(n)}_k}f
\quad\text{if $\partial U^{(n)}_k\ne\emptyset$}
\]
and
\[
g_k=\fint_{U^{(n)}_k}f\,\dd\mu\text{ on }U^{(n)}_k
\quad\text{if $\partial U^{(n)}_k=\emptyset$}.
\]
Such choices are possible from Proposition~\ref{prop:3.5}.

Let $h_0=f$ and for $k\in\N$ take $h_k\in \cH_{U^{(n)}_k}(h_{k-1})$ such that $h_k=g_k$ on $U^{(n)}_k$, inductively.
This is possible from Lemma~\ref{lem:3.4} with $c=0$ and $h_{k-1}=f$ $\mu$-a.e.\ on $U^{(n)}_k$ for each $k$.
Then,
\begin{align*}
\|h_k\|_{L^\infty(K,\mu)}&\le \|f\|_\infty,\\
\|h_k\|_{L^2(K,\mu)}^2&\le \left(\sup_{x\in K}|f(x)|^2\right)\mu\left(\bigcup_{l\in\Lm_{n_0};\; {U_l^{(n_0)}}\cap {\supp f\ne\emptyset}}U_l^{(n_0)}\right)=:M<\infty,
\end{align*}
and
\[
\cE(h_k,h_k)\le\cE(h_{k-1},h_{k-1})\le\cdots\le\cE(f,f).
\]
Therefore, $\{h_k\}_{k=1}^\infty$ is bounded both in $L^\infty(K,\mu)$ and in $\cF$.
Since $h_k$ converges $\mu$-a.e.\ as $k\to\infty$, its limit, say $h^{(n)}$, is the weak limit of $\{h_k\}_{k=1}^\infty$ in $\cF$.
Then, $\|h^{(n)}\|_{L^\infty(K,\mu)}\le \|f\|_\infty$ and
\begin{equation}\label{eq:bound0}
\cE(h^{(n)},h^{(n)})+\|h^{(n)}\|_{L^2(K,\mu)}^2\le \cE(f,f)+M.
\end{equation}
For each $k\in\N$, $h^{(n)}=h_k=g_k$ $\mu$-a.e.\ on $U^{(n)}_k$.
In particular,
\begin{equation}\label{eq:bound1}
\inf_{\partial U^{(n)}_k}f\le \muessinf_{U^{(n)}_k}h^{(n)}\le \muesssup_{U^{(n)}_k}h^{(n)}\le \sup_{\partial U^{(n)}_k}f
\quad\text{if $\partial U^{(n)}_k\ne\emptyset$}
\end{equation}
and
\begin{equation}\label{eq:bound2}
h^{(n)}=\fint_{U^{(n)}_k}f\,\dd\mu\text{ on }U^{(n)}_k
\quad\text{if $\partial U^{(n)}_k=\emptyset$}.
\end{equation}
Note that the complement of $\bigcap_{n\in\N}\bigcup_{k\in\Lm_n}U^{(n)}_k$ is a $\mu$-null set.
We fix a metric on $K$ that is compatible with its topology.
Suppose $x\in\bigcap_{n\in\N}\bigcup_{k\in\Lm_n}U^{(n)}_k$.
For each $n$, the unique $k\in \Lm_n$ such that $x\in U^{(n)}_k$ will be denoted by $k_n$.
Then, $U^{(1)}_{k_1}\supset U^{(2)}_{k_2}\supset\cdots\supset U^{(n)}_{k_n}\supset\cdots$. If the diameter of $U^{(n)}_{k_n}$ did not converge to $0$ as $n\to\infty$, there would exist a neighborhood of $x$ that is included in $U^{(n)}_{k_n}$ for all $n$, which is in contradiction with Assumption (A2) (b). Therefore, the diameter of $U^{(n)}_{k_n}$ converges to $0$.
By keeping \eqref{eq:bound1} and \eqref{eq:bound2} in mind, $h^{(n)}(x)\to f(x)$ as $n\to\infty$ for $\mu$-a.e.\,$x$.

Since $\{h^{(n)}\}_{n=n_0}^\infty$ is bounded in $\cF$ from \eqref{eq:bound0}, $h^{(n)}$ converges to $f$ weakly in $\cF$. Therefore, the Ces\`aro means of some subsequence of $\{h^{(n)}\}_{n=n_0}^\infty$, which belong to $\cH_*\cap L^\infty(K,\mu)$, converge to $f$ in $\cF$.
\end{proof}
We collect some basic properties on the energy measures for later use.
\begin{proposition}[cf.\ {\cite[Section~3.2]{FOT11}}]\label{prop:sl}\nopagebreak
\begin{enumerate}
\item For $f,g\in \cF$ and $A\in\cB$, 
\[
\left|\nu_f(A)^{1/2}-\nu_g(A)^{1/2}\right|\le\nu_{f-g}(A)^{1/2}\quad\text{and}\quad\left|\nu_{f,g}(A)\right|\le\nu_f(A)^{1/2}\nu_g(A)^{1/2}.
\]
\item For $f,g\in\cF$, $\ph\in L^2(K,\nu_f)$, and $\psi\in L^2(K,\nu_g)$, it holds that $\ph\psi\in L^1(K,|\nu_{f,g}|)$ and
\[
\left|\int_K \ph\psi\,\dd\nu_{f,g}\right|\le\left(\int_K \ph^2\,\dd\nu_f\right)^{1/2}\left(\int_K \psi^2\,\dd\nu_g\right)^{1/2}.
\]
\item $\nu_f(K)=2\cE(f,f)$ for $f\in\cF$. Thus, $\nu_{f,g}(K)=2\cE(f,g)$ for $f,g\in \cF$.
\item If $f\in\cF$ is constant on a relatively compact open set $U$, then $\nu_f(U)=0$.
\item For $m\in\N$, $\Phi\in C^1_b(\R^m)$ with $\Phi(0,\dots,0)=0$, and $f_1,\dots, f_m,g\in\cF$,
\begin{equation}\label{eq:derivation}
 \dd \nu_{\Phi(f_1,\dots,f_m),g}=\sum_{i=1}^m\frac{\partial\Phi}{\partial x_i}(f_1,\dots,f_m)\,\dd\nu_{f_i,g}.
\end{equation}
\end{enumerate}
\end{proposition}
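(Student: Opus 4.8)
The five assertions constitute the standard differential calculus of energy measures, and the plan is to derive all of them from the defining relation $\int_K\ph\,\dd\nu_f=2\cE(f\ph,f)-\cE(\ph,f^2)$ (for bounded $f\in\cF$ and $\ph\in\cF\cap C_c(K)$) together with its polarization
\[
\int_K\ph\,\dd\nu_{f,g}=\cE(f\ph,g)+\cE(g\ph,f)-\cE(\ph,fg),
\]
valid for bounded $f,g$; the passage to general elements of $\cF$ is then carried out through the defining truncation limit $\nu_f(B)=\lim_n\nu_{f_n}(B)$, $f_n=(-n)\vee f\wg n$, as in \cite[Section~3.2]{FOT11}. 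The unifying observation is that, for each fixed Borel set $A$, the map $(f,g)\mapsto\nu_{f,g}(A)$ is a symmetric bilinear form which is nonnegative definite, since $\nu_{f,f}(A)=\nu_f(A)\ge0$.

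For (i), the Cauchy--Schwarz inequality for this nonnegative-definite form yields $|\nu_{f,g}(A)|\le\nu_f(A)^{1/2}\nu_g(A)^{1/2}$ at once, and the triangle-type inequality follows by expanding $\nu_{f-g}(A)=\nu_f(A)-2\nu_{f,g}(A)+\nu_g(A)$ and taking square roots. For (ii), I would first note that combining the set-wise bound of (i) with the Hahn decomposition of $\nu_{f,g}$ and the arithmetic--geometric inequality gives the domination $|\nu_{f,g}|\le\frac12(\eps\,\nu_f+\eps^{-1}\nu_g)$ for every $\eps>0$, which secures $\ph\psi\in L^1(K,|\nu_{f,g}|)$ whenever $\ph\in L^2(K,\nu_f)$ and $\psi\in L^2(K,\nu_g)$. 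The inequality itself I would establish first for simple functions $\ph=\sum_k\a_k\bfone_{C_k}$ and $\psi=\sum_k\b_k\bfone_{C_k}$ over a common measurable partition $\{C_k\}$: applying (i) termwise and then the discrete Cauchy--Schwarz inequality gives the bound $(\sum_k\a_k^2\nu_f(C_k))^{1/2}(\sum_k\b_k^2\nu_g(C_k))^{1/2}$, after which approximation of $\ph,\psi$ by simple functions in $L^2(K,\nu_f)$ and $L^2(K,\nu_g)$ completes the claim.

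For (iv), the most direct manifestation of strong locality, I would test the defining relation against $\ph\in\cF\cap C_c(U)$: the relatively compact open set $U$ is then a neighborhood of $\supp[\ph]$ on which both $f$ and $f^2$ are constant, so $\cE(f\ph,f)=c\,\cE(\ph,f)=0$ and $\cE(\ph,f^2)=0$, giving $\int_K\ph\,\dd\nu_f=0$; letting $\ph\uparrow\bfone_U$ yields $\nu_f(U)=0$, and truncation removes boundedness. For (iii), I would take $f$ bounded with compact support and a cutoff sequence $\ph_n\in\cF\cap C_c(K)$ with $0\le\ph_n\uparrow1$ and $\ph_n\equiv1$ on a neighborhood of $\supp[f]$ for all large $n$; for such $n$ one has $f\ph_n=f$ and, by strong locality, $\cE(\ph_n,f^2)=0$, so the defining relation reduces to the constant value $\int_K\ph_n\,\dd\nu_f=2\cE(f,f)$, while the left-hand side increases to $\nu_f(K)$ by monotone convergence, whence $\nu_f(K)=2\cE(f,f)$; the mutual version follows by polarization. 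The extension to general $f\in\cF$ then proceeds by the standard cutoff and truncation procedure of \cite[Section~3.2]{FOT11}. Finally, for (v) I would deduce from the polarized identity the Leibniz rule $\dd\nu_{uv,g}=u\,\dd\nu_{v,g}+v\,\dd\nu_{u,g}$ for bounded $u,v\in\cF$, hence the chain rule \eqref{eq:derivation} for polynomials $\Phi$ with $\Phi(0,\dots,0)=0$, and then approximate a general $\Phi\in C^1_b(\R^m)$ by polynomials $P_k$ with $P_k\to\Phi$ and $\nabla P_k\to\nabla\Phi$ uniformly on a compact set containing the essential range of $(f_1,\dots,f_m)$, passing to the limit with the help of the Cauchy--Schwarz estimate (ii); truncation handles unbounded $f_i$.

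The bilinear and simple-function steps behind (i), (ii), and (iv) are routine. I expect the principal obstacles to lie in the two limiting arguments. In (iii) the delicate point is the global identification of the total mass for $f$ that is neither bounded nor compactly supported, which requires a cutoff sequence of controlled energy converging to $1$ and genuinely uses strong locality; this I would import from \cite[Section~3.2]{FOT11}. In (v) the subtlety is that a measure identity---not merely a scalar inequality---must survive a nonlinear approximation, so the uniform control of both $P_k$ and $\nabla P_k$ on the range of the $f_i$, together with the consistent treatment of the truncations, is essential, and I regard this chain-rule passage as the main technical hurdle.
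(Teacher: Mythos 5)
Your proposal is correct and follows the standard derivation of these facts from the defining identity $\int_K\ph\,\dd\nu_f=2\cE(f\ph,f)-\cE(\ph,f^2)$; the paper itself offers no proof, deferring to \cite[Section~3.2]{FOT11}, and your argument is essentially the one found there (bilinearity and positivity of $(f,g)\mapsto\nu_{f,g}(A)$ for (i)--(ii), strong locality tested against cutoffs for (iii)--(iv), and the Leibniz rule plus polynomial approximation for (v)). No substantive divergence from the cited source to report.
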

We should remark that the strong locality of $(\cE,\cF)$ is crucial for Proposition~\ref{prop:sl}~(iii)--(v). We also note that taking a quasi-continuous modification of $f_i$ is important in the expression of \eqref{eq:derivation}.

A sequence of signed Borel measures $\{\eta_k\}_{k=1}^\infty$ on a metric space $Y$ is said to converge weakly to a signed Borel measure $\eta_\infty$, if, by definition, $\int_Y f\,\dd\eta_k\to\int_Y f\,\dd\eta_\infty$ as $k\to\infty$ for every bounded continuous function $f$ on $Y$.
In what follows, we treat only the situation where $\eta_k$ concentrates on a compact subset $W$ independent of $k$. In such a case, the weak convergence is consistent with the weak-$*$ convergence by identifying the space of signed measures on $W$ with the topological dual space of $C(W)$.

We prepare a lemma from the measure theory.
\begin{lemma}\label{lem:measure}
Let $Y$ be a separable metric space.
Suppose that a sequence of finite Borel measures $\{\eta_k\}_{k=1}^\infty$ on $Y$ converges weakly to a non-zero finite measure $\eta_\infty$.
Let $a\in Y$ belong to the support of $\eta_\infty$.
Then, there exists $k_1\in\N$ and a sequence $\{a^{(k)}\}_{k=k_1}^\infty$ in $Y$ such that $a^{(k)}$ converges to $a$ as $k\to\infty$ and $a^{(k)}$ belongs to the support of $\eta_k$ for any $k\ge k_1$.
\end{lemma}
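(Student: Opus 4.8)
The plan is to combine the defining property of the support of $\eta_\infty$ with weak convergence tested against suitable continuous bump functions, and then to extract the required sequence by a diagonal argument. Fix a metric $d$ on $Y$ compatible with its topology and, for $\rho>0$, write $B(a,\rho)$ for the open $\rho$-ball centered at $a$. By definition of the support, $a\in\supp\eta_\infty$ means $\eta_\infty(B(a,\rho))>0$ for every $\rho>0$. First I would introduce, for each $\rho>0$, the Lipschitz test function $f_\rho(x)=\bigl(1-d(x,a)/\rho\bigr)\vee 0$. It is bounded and continuous, satisfies $0\le f_\rho\le\bfone_{B(a,\rho)}$, and obeys $f_\rho\ge 1/2$ on $B(a,\rho/2)$; consequently $\int_Y f_\rho\,\dd\eta_\infty\ge\tfrac12\,\eta_\infty(B(a,\rho/2))>0$.

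Next I would transfer this positivity to the approximating measures. By weak convergence, $\int_Y f_\rho\,\dd\eta_k\to\int_Y f_\rho\,\dd\eta_\infty>0$, so there is a threshold $K(\rho)\in\N$ with $\int_Y f_\rho\,\dd\eta_k>0$ for all $k\ge K(\rho)$. Since $f_\rho\le\bfone_{B(a,\rho)}$, this forces $\eta_k(B(a,\rho))\ge\int_Y f_\rho\,\dd\eta_k>0$ for every $k\ge K(\rho)$. The point of testing against a function supported strictly inside the ball (rather than the indicator itself) is precisely that weak convergence controls integrals of continuous functions but not the $\eta_k$-masses of balls whose boundaries might carry mass; the bump function circumvents this and plays the role of the open-set half of the portmanteau theorem.

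It then remains to locate an actual support point inside each such ball and to diagonalize. Because $Y$ is separable, hence second countable, the complement $Y\setminus\supp\eta_k$ is a countable union of $\eta_k$-null open sets and is therefore itself $\eta_k$-null; thus $\eta_k(B(a,\rho))>0$ implies $B(a,\rho)\cap\supp\eta_k\ne\emptyset$. Applying the previous paragraph with $\rho=1/m$ for $m\in\N$, and replacing $K(1/m)$ by $\max\{K(1),K(1/2),\dots,K(1/m),m\}$, I may assume the thresholds are strictly increasing with $K(1/m)\to\infty$. Set $k_1=K(1)$, and for $k\ge k_1$ let $m(k)=\max\{m\in\N\mid K(1/m)\le k\}$, which is well defined, at least $1$, and satisfies $m(k)\to\infty$ as $k\to\infty$. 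Choosing any $a^{(k)}\in B(a,1/m(k))\cap\supp\eta_k$ yields $d(a^{(k)},a)<1/m(k)\to 0$, which is the assertion.

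The only genuinely delicate steps are the two just flagged: passing from weak convergence to strict positivity of $\eta_k$ on the open ball (handled by the interior bump $f_\rho$), and the fact that an open set of positive measure must meet the support (which relies on second countability of $Y$). Neither step needs convergence of the total masses $\eta_k(Y)$—although that does follow automatically by testing against the constant function $1$—and the concluding diagonal extraction is entirely routine. I expect the support-intersection argument via second countability to be the conceptual crux, with the rest being bookkeeping.
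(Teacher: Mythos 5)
Your proof is correct and follows essentially the same route as the paper's: establish $\varliminf_k\eta_k(B(a,\rho))\ge\eta_\infty(B(a,\rho))>0$ by testing weak convergence against continuous functions dominated by $\bfone_{B(a,\rho)}$ (the paper uses a monotone sequence $\ph_n\nearrow\bfone_{B_m}$ where you use a single Lipschitz bump, an immaterial difference), then pick a support point inside each ball and diagonalize over shrinking radii. Your explicit justification via second countability that a positive-measure open set must meet the support is a point the paper leaves implicit, and your bookkeeping of the thresholds, though phrased as "strictly increasing" when it only yields nondecreasing thresholds tending to infinity, still makes $m(k)$ well defined and divergent, so the argument goes through.
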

\begin{proof}
For $m\in\N$, let $B_m$ denote the open ball on $Y$ with center $a$ and radius $1/m$.
Take a sequence $\{\ph_n\}_{n=1}^\infty$ in $C(Y)$ such that each $\ph_n$ is nonnegative and $\{\ph_n\}_{n=1}^\infty$ is monotonically increasing in $n$ and converges pointwise to $\bfone_{B_m}$.
Then, 
\[
\varliminf_{k\to\infty}\eta_k(B_m)
\ge\varliminf_{k\to\infty}\int_Y \ph_n\,\dd\eta_k
=\int_Y \ph_n\,\dd\eta_\infty.
\]
By letting $n\to\infty$, $\varliminf_{k\to\infty}\eta_k(B_m)\ge\eta_\infty(B_m)>0$.
Therefore, there exists $k_m\in\N$ such that $\eta_k(B_m)>0$ for all $k\ge k_m$.
We can take $\{k_m\}_{m=1}^\infty$ so that it is strictly increasing.
For each $k\ge k_1$, there exists a unique $m\in\N$ such that $k_m\le k<k_{m+1}$.
Since $\eta_k(B_m)>0$, the support $S_k$ of $\eta_k|_{B_m}$ is non-empty.
Select a point $a^{(k)}$ from $S_k$. Then, $\{a^{(k)}\}_{k=k_1}^\infty$ satisfies the required condition.
\end{proof}
The following is a special case of the general theory of Dirichlet forms.
\begin{proposition}\label{prop:cap}
Let $U$ be an open subset of $K$.
Let $\{A_n\}_{n=1}^\infty$ be a sequence of decreasing open subsets of $U$. If $\lim_{n\to\infty}\Cp_1(A_n)=0$, then $\lim_{n\to\infty}\Cp(A_n;U)=0$.
\end{proposition}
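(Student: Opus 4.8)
The plan is to reduce the relative capacity to the $1$-capacity by multiplying near-minimizers for $\Cp_1(A_n)$ by a single fixed cutoff supported in $U$, and then to control the resulting energy through a Leibniz expansion of the associated energy measure. First I would record that $\Cp(A_n;U)$ is nonincreasing in $n$, since any function admissible for $A_n$ is admissible for $A_{n+1}\subset A_n$; hence it suffices to exhibit a subsequence tending to $0$. Next, for each $n$ I choose $e_n\in\cF$ with $0\le e_n\le 1$ (truncating via the Markov property), with $e_n\ge 1$ $\mu$-a.e.\ on some open set containing $A_n$, and with $\cE(e_n,e_n)+\|e_n\|_{L^2(K,\mu)}^2\le \Cp_1(A_n)+1/n$. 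Then $e_n\to0$ in $\cF$, and by \cite[Theorem~2.1.4]{FOT11} I pass to a subsequence along which $e_n\to0$ q.e.

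I then fix once and for all a cutoff $\chi\in\cF\cap C_c(U)$ with $0\le\chi\le1$ and $\chi=1$ on a neighborhood of $A_1$; such a $\chi$ exists by the regularity of $(\cE,\cF)$ provided $\overline{A_1}$ is a compact subset of $U$, which is the situation of interest (the $A_n$ shrinking to an interior point). Since $A_n\subset A_1$ and $\chi=1$ there, the function $g_n:=\chi e_n$ satisfies $g_n\ge1$ $\mu$-a.e.\ on a neighborhood of $A_n$, while $g_n=0$ q.e.\ on $K\setminus U$ because $\supp\chi\subset U$. Thus $g_n$ is admissible for $\Cp(A_n;U)$, giving $\Cp(A_n;U)\le\cE(g_n,g_n)$, and it remains to prove $\cE(g_n,g_n)\to0$.

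Applying the derivation property (Proposition~\ref{prop:sl}~(v)) twice to the product $\chi e_n$ (after the harmless truncation of the multiplication map needed to place it in $C^1_b$), I obtain
\[
2\cE(\chi e_n,\chi e_n)=\int_K\chi^2\,\dd\nu_{e_n}+2\int_K\chi e_n\,\dd\nu_{\chi,e_n}+\int_K e_n^2\,\dd\nu_\chi.
\]
The first term is at most $\nu_{e_n}(K)=2\cE(e_n,e_n)\to0$. For the second, the Cauchy--Schwarz inequality for energy measures (Proposition~\ref{prop:sl}~(ii)) bounds it by $(\int_K\chi^2\,\dd\nu_\chi)^{1/2}(\int_K e_n^2\,\dd\nu_{e_n})^{1/2}\le(2\cE(\chi,\chi))^{1/2}(2\cE(e_n,e_n))^{1/2}\to0$, where I used $\chi^2\le1$ and $e_n^2\le1$. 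The third term is the delicate one: $\nu_\chi$ is a fixed finite measure charging no set of zero capacity, and along the chosen subsequence $e_n\to0$ q.e.\ with $0\le e_n\le1$, so $\int_K e_n^2\,\dd\nu_\chi\to0$ by dominated convergence. Combining the three estimates gives $\cE(g_n,g_n)\to0$, whence $\Cp(A_n;U)\to0$.

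I expect the main obstacle to be precisely this third term $\int_K e_n^2\,\dd\nu_\chi$, the ``interface'' energy carried by the cutoff. It is this term that defeats the naive product bound $\cE(\chi e_n,\chi e_n)^{1/2}\le\|\chi\|_\infty\cE(e_n,e_n)^{1/2}+\|e_n\|_\infty\cE(\chi,\chi)^{1/2}$, which stays bounded away from $0$ because $\|e_n\|_\infty=1$. Its resolution relies on two facts working in tandem: energy measures never charge polar sets, and the minimizers converge to $0$ not merely in $\cF$ but quasi-everywhere, so that boundedness plus dominated convergence applies. A secondary but genuinely necessary point is the availability of a single cutoff $\chi$ equal to $1$ near every $A_n$; producing it uses the relative compactness of $A_1$ in $U$, and this is the geometric input on which the argument (and the truth of the statement) ultimately rests.
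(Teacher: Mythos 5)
Your argument is internally coherent up to the point where you invoke the cutoff $\chi$, and the three-term Leibniz estimate (in particular the treatment of the interface term $\int_K e_n^2\,\dd\nu_\chi$ via the facts that energy measures charge no polar sets and that a subsequence of the near-minimizers converges quasi-everywhere) is exactly right. The problem is the hypothesis you smuggle in to produce $\chi$: you need $\overline{A_1}$ to be a \emph{compact subset of $U$}, and you concede that this is where the argument ``ultimately rests.'' But the proposition assumes nothing of the sort --- the $A_n$ are merely decreasing open subsets of $U$ with $\Cp_1(A_n)\to0$ --- and, more importantly, the extra hypothesis fails in the one place the paper actually uses the proposition. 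In the proof of Proposition~\ref{prop:qe}~(i), the shrinking sets come from the quasi-continuity of $g$: they are intersections with $U$ of open sets of small $\Cp_1$-capacity, and there is no reason for them to stay away from $\partial U$ or to be relatively compact in $U$ (think of $A_n=B(p,1/n)\cap U$ with $p\in\partial U$ a polar point: the hypotheses of the proposition hold, but no $\chi\in\cF\cap C_c(U)$ can equal $1$ on a neighborhood of $A_1$). So what you have proved is a strictly weaker statement that does not cover the intended application; the gap is not a technicality but the absence of any mechanism for forcing admissibility for $\Cp(\,\cdot\,;U)$ (i.e.\ vanishing q.e.\ off $U$) when the $A_n$ reach the boundary of $U$.

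For comparison, the paper does not give an argument at all: it cites the proof of \cite[Theorem~4.4.3~(ii)]{FOT11}. That proof replaces your multiplication by a cutoff with a projection: starting from the equilibrium potentials $e_n$, one subtracts the part of $e_n$ ``seen from outside $U$'' (via the $1$-order hitting distribution of $K\setminus U$, equivalently the orthogonal decomposition of $\cF$ into the part space on $U$ and the space of functions harmonic outside $K\setminus U$). The resulting function automatically vanishes q.e.\ on $K\setminus U$ with no geometric condition on the $A_n$, and its energy is controlled by $\cE_1(e_n,e_n)\to0$. That is the idea your proof is missing; with it, the cutoff (and the relative-compactness assumption) becomes unnecessary.
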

\begin{proof}
See the proof of \cite[Theorem~4.4.3 (ii)]{FOT11}.
\end{proof}
The following claim is rather standard, but we give a proof for completeness.
\begin{proposition}[cf.\ {\cite[Theorem~2.1.5]{FOT11}}]\label{prop:qe}
Let $U$ be a relatively compact open subset of $K$ and $A$ a subset of $U$.
\begin{enumerate}
\item Suppose that $g\in\cF$ satisfies that $g\ge 1$ q.e.\ on $A$ and $g=0$ q.e.\ on $K\setminus U$.
Then, $\Cp(A;U)\le \cE(g,g)$.
\item There exists $f\in\cF$ such that $f=1$ q.e.\ on $A$, $f=0$ q.e.\ on $K\setminus U$, and $\cE(f,f)=\Cp(A;U)$.
\end{enumerate}
\end{proposition}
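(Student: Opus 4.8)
The two assertions are the relative-capacity analogues of \cite[Theorem~2.1.5]{FOT11}, and the plan is to prove (i) first and then deduce (ii) from it. Throughout I may assume $0\le g\le1$ (resp.\ $0\le f\le1$): truncating by the normal contraction $t\mapsto(0\vee t)\wg1$ preserves the hypotheses ``$\ge1$ q.e.\ on $A$'' and ``$=0$ q.e.\ on $K\setminus U$'' and does not increase $\cE$ by the Markov property, exactly as in the proof of Lemma~\ref{lem:3.1}. The genuine difficulty lies entirely in (i): the defining infimum for $\Cp(A;U)$ ranges over functions that are $\ge1$ \emph{$\mu$-a.e.\ on an open set containing $A$}, whereas the hypothesis on $g$ is only the weaker statement ``$\ge1$ q.e.\ on $A$'' for its quasi-continuous version. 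Bridging this gap is the crux.

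For (i) I would first fix the exceptional set $N$ on which $g<1$ somewhere on $A$, together with the quasi-continuity exceptional sets, and use the outer regularity of $\Cp_1$ to choose open sets $\Omega_m\supset N$ with $\Cp_1(\Omega_m)\to0$ on whose complements $g$ is continuous; passing to $B_m:=\bigcup_{j\ge m}\Omega_j$ (with $\Cp_1(\Omega_j)$ summable) makes them decreasing while keeping $\Cp_1(B_m)\to0$ and $g|_{K\setminus B_m}$ continuous. On $K\setminus B_m$ the superlevel set $\{g>1-\delta_m\}$ is relatively open, so $G_m:=B_m\cup\{g>1-\delta_m\}$ is genuinely open in $K$ and contains $A$ (points of $A$ outside $B_m$ avoid $N$, hence satisfy $g\ge1>1-\delta_m$). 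Setting $V'_m:=G_m\cap U$, an open neighbourhood of $A$, the function $g/(1-\delta_m)$ already exceeds $1$ $\mu$-a.e.\ on $V'_m\setminus B_m$; to cover the remaining piece $V'_m\cap B_m\subset B_m\cap U$ I would invoke Proposition~\ref{prop:cap}, which gives $\Cp(B_m\cap U;U)\to0$ and hence correctors $e_m\in\cF$ with $0\le e_m\le1$, with $e_m\ge1$ $\mu$-a.e.\ on an open set containing $B_m\cap U$, with $e_m=0$ q.e.\ on $K\setminus U$, and with $\cE(e_m,e_m)\to0$. Then $f_m:=g/(1-\delta_m)+e_m$ is admissible for $\Cp(A;U)$: it is $\ge1$ $\mu$-a.e.\ on $V'_m$ and vanishes q.e.\ off $U$. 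Thus $\Cp(A;U)\le\cE(f_m,f_m)$, and letting $m\to\infty$ the triangle inequality for $\cE(\cdot,\cdot)^{1/2}$ together with $\delta_m\to0$ and $\cE(e_m,e_m)\to0$ yields $\Cp(A;U)\le\cE(g,g)$.

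For (ii) I would take a minimizing sequence $\{g_k\}$ for the defining infimum, truncate to $0\le g_k\le1$, and observe that it is bounded in $\cF$ because $\cE(g_k,g_k)$ is bounded and $\|g_k\|_{L^2(K,\mu)}^2\le\mu(\overline U)<\infty$, as $U$ is relatively compact and $\mu$ is Radon. By Banach--Alaoglu, Banach--Saks, and q.e.\ convergence of a subsequence of the Ces\`aro means---the same extraction already used in Lemma~\ref{lem:3.1} via \cite[Theorem~2.1.4]{FOT11}---I obtain a limit $f\in\cF$ with $0\le f\le1$ q.e., with $f\ge1$ q.e.\ on $A$ (each $g_k\ge1$ q.e.\ on $A$ by Lemma~\ref{lem:aeqe}, a property stable under convex combinations and q.e.\ limits), and with $f=0$ q.e.\ on $K\setminus U$. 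Convexity of $\cE$ along the Ces\`aro means gives $\cE(f,f)\le\Cp(A;U)$, while part (i) applied to $f$ gives the reverse inequality; hence $\cE(f,f)=\Cp(A;U)$. Finally, $0\le f\le1$ q.e.\ together with $f\ge1$ q.e.\ on $A$ upgrades the constraint to $f=1$ q.e.\ on $A$, completing (ii).

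The main obstacle is precisely the passage in (i) from the quasi-everywhere constraint on $A$ to the $\mu$-a.e.\ open-set constraint built into the definition of $\Cp(A;U)$. The quasi-continuity bookkeeping (the construction of the decreasing sets $B_m$) and the small-energy correctors $e_m$ furnished by Proposition~\ref{prop:cap} are exactly what make this transition quantitative; everything else reduces to the routine Hilbert-space extraction already exploited in Lemma~\ref{lem:3.1}.
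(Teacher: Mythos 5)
Your proposal is correct and follows essentially the same route as the paper: for (i), both arguments use quasi-continuity to produce open sets of small $\Cp_1$, convert this to small relative capacity via Proposition~\ref{prop:cap}, and add a small-energy corrector so that $g$ (suitably dilated) becomes admissible for $\Cp(A;U)$; for (ii), both use the same truncation and Banach--Alaoglu/Banach--Saks extraction with q.e.\ convergence of Ces\`aro means, then invoke (i) for the reverse inequality. Your explicit construction of the decreasing sets $B_m$ is just a more detailed spelling-out of the step the paper compresses into its choice of $O_\eps$.
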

\begin{proof}
(i) Let $\eps>0$.
From Proposition~\ref{prop:cap}, there exists an open subset $O_\eps$ of $U$ such that $\Cp(O_\eps;U)<\eps$, $g|_{U\setminus O_\eps}$ is continuous and $g\ge1$ on $A\setminus O_\eps$.
Take $e_\eps\in\cF$ such that $e_\eps=1$ $\mu$-a.e.\ on $O_\eps$, $e_\eps=0$  q.e.\ on $K\setminus U$, and $\cE(e_\eps,e_\eps)<\eps$.
Then, the set
\[
  G_\eps:=\{x\in U\setminus O_\eps\mid g(x)>1-\eps\}\cup O_\eps
\]
is open, $A\subset G_\eps$, $g+e_\eps>1-\eps$ $\mu$-a.e.\ on $G_\eps$, and $g+e_\eps=0$ q.e.\ on $K\setminus U$.
Therefore,
\begin{align*}
\Cp(A; U)&\le (1-\eps)^{-2}\cE(g+e_\eps,g+e_\eps)\\
&\le (1-\eps)^{-2}\left(\sqrt{\cE(g,g)}+\sqrt{\cE(e_\eps,e_\eps)}\right)^{2}\\
&\le (1-\eps)^{-2}\left(\sqrt{\cE(g,g)}+\sqrt{\eps}\right)^{2}.
\end{align*}
Letting $\eps\to0$, we get the conclusion.

(ii) There exists a sequence of functions $\{f_n\}_{n=1}^\infty$ in $\cF$ such that $0\le f_n\le1$ $\mu$-a.e.\ on $K$, $f_n=1$ $\mu$-a.e.\ on an open set including $A$, and $f_n=0$ q.e.\ on $K\setminus U$ for each $n$, and $\lim_{n\to\infty}\cE(f_n,f_n)=\Cp(A;U)$. Since $\cE(f_n,f_n)+\|f_n\|_{L^2(K,\mu)}^2$ is bounded in $n$, some subsequence $\{f_{n'}\}$ converges weakly in $\cF$. Its limit $f$ satisfies that $\cE(f,f)\le \Cp(A;U)$. Since the Ces\`aro means of some subsequence of $\{f_{n'}\}$ converges to $f$ in $\cF$, from Lemma~\ref{lem:aeqe} and \cite[Theorem~2.1.4]{FOT11}, $f=1$ q.e.\ on $A$ and $f=0$ q.e.\ on $K\setminus U$.
By combining this with the assertion in (i), we arrive at the conclusion.
\end{proof}
The function $f$ in Proposition~\ref{prop:cap} will be denoted by $e_{A;U}$. Although such a function may not be unique, we write $e_{A;U}$ to describe one of such $f$, whose choice does not affect the arguments that follow.
\begin{remark}[Comments for experts of the Dirichlet form theory]
Since the part form of $(\cE,\cF)$ on $U$ is not assumed to be transient, the converse of Proposition~\ref{prop:cap} is not true in general. In particular, Proposition~\ref{prop:qe} does not follow directly from the corresponding claim with $\Cp(A;U)$ replaced by the (1-)capacity of $A$ with respect to the part form on $U$.
\end{remark}
With these definitions, assumptions, and preliminary propositions in place, we now proceed to the proof of Theorem~\ref{th:main} in the next section.

\section{Proof of the main theorem}\label{sec:main}
In this section, we prove Theorem~\ref{th:main} by contradiction. The outline is as follows: suppose that $d_{\mathrm{m}}\ge2$. Then, we can construct a sequence of pair of nice harmonic functions. By considering a sequence of approximated $0$-order Green functions based on such harmonic functions, it ultimately contradicts the condition (A3)~(c).

The following is the key proposition for proving Theorem~\ref{th:main}. The functions $\bfh^{(k)}$ there can be thought of as partial coordinate systems normalized to balance the total energy measure. The weak convergence properties of their energy interactions encapsulate the obstruction to having (A3)~(c).

We recall that for a map $\Phi$, the notation $\Phi_* \nu$ denotes the push-forward of a measure $\nu$, while $\Phi^* f=f\circ\Phi$ denotes the pull-back of functions $f$.
\begin{proposition}\label{prop:key}
We assume {\rm(A1)--(A3)} in Assumption~\ref{assumption} except {\rm (A3) (c)}.
Suppose $d\in\N$ satisfies $d\le d_\mathrm{m}$.
Then, there exist a strict increasing sequence $\{n_k\}_{k=1}^\infty$ of natural numbers and $\lm_k\in \Lm_{n_k}$ and $\bfh^{(k)}=(h_1^{(k)},h_2^{(k)},\dots,h_d^{(k)})\in(\cH_{U_{\lm_k}^{(n_k)}})^d$ for $k\in\N$ such that the following hold.
\begin{itemize}
\item Every $h_i^{(k)}$ $(i=1,2,\dots,d,\ k\in\N)$ is a bounded function.
\item For each $k\in\N$, $\nu_{\bfh^{(k)}}(U_{\lm_k}^{(n_k)})=1$.
\item For each $k\in\N$, $0$ belongs to the support of the measure $(\bfh^{(k)}|_{U_{\lm_k}^{(n_k)}})_*(\mu|_{U_{\lm_k}^{(n_k)}})$ on $\R^d$.
\item For each $i,j=1,2,\dots,d$, the measure $(\a_k\bfh^{(k)}|_{U_{\lm_k}^{(n_k)}})_*(e_k^2|\nu_{h_i^{(k)},h_j^{(k)}}|)$ converges weakly to
\[
\begin{cases}
\xi(x)\,\cL^d(dx)& (i=j)\\
0 &(i\ne j)
\end{cases}
\]
as $k\to\infty$ for some $\xi\in L^1(\R^d,\cL^d)$ (independent of $i$) with $\sqrt{\xi}\in W^{1,2}(\R^d)$.
\item The measure $(\a_k\bfh^{(k)}|_{U_{\lm_k}^{(n_k)}})_*(\a_k^{-2}\nu_{e_k})$ on $\R^d$ converges weakly to some finite Borel measure $\rho$ as $k\to\infty$.
\end{itemize}
Here, 
\begin{align}
\nu_{\bfh^{(k)}}&=\frac1d \sum_{i=1}^d \nu_{h_i^{(k)}},\label{eq:nubfh}\\
\a_k&=\Biggl(\muesssup_{U_{\lm_k}^{(n_k)}}|\bfh^{(k)}|\Biggr)^{-1},\label{eq:akdef}\\
\shortintertext{and}
e_k&=e_{V_{\lm_k}^{(k)};U_{\lm_k}^{(k)}}.\label{eq:ekdef}
\end{align}
\end{proposition}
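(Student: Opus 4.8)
The plan is to exploit the assumption $d \le d_{\mathrm{m}}$ together with the characterization $d_{\mathrm{m}} = \text{index}$ (Theorem~\ref{th:char}). Since the index is at least $d$, there must exist, on a $\nu$-positive set, functions whose mutual energy-measure densities have rank $\ge d$. The construction should proceed by selecting, at each scale $n$, a cell $U_k^{(n)}$ inside which we can find $d$ harmonic functions whose energy measures are ``genuinely $d$-dimensional.'' Let me think about how to build the sequence.

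Let me reconsider from the index condition. Since the index is $\ge d$, there exist $f_1,\dots,f_N \in \cF$ and a Borel set $B$ with $\nu(B)>0$ on which the Gram-type matrix $(\dd\nu_{f_i,f_j}/\dd\nu)$ has rank $\ge d$.

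**Proof proposal.**

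First I would reduce to harmonic functions on cells. By Proposition~\ref{prop:dense}, $\cH_* \cap L^\infty(K,\mu)$ is dense in $\cF$, and since $\text{index}=d_{\mathrm{m}}\ge d$, I may choose $f_1,\dots,f_d \in \cH_* \cap L^\infty$ such that, on a set of positive $\nu$-measure, the matrix $(\dd\nu_{f_i,f_j}/\dd\nu)$ has rank exactly $d$; this is possible because the index is stable under $\cF$-approximation (the rank bound passes to $\cF$-limits via Proposition~\ref{prop:sl}~(i)). Fixing an $n_0$ large enough that all $f_i \in \cH_{n_0}$, each $f_i$ is harmonic on every cell $U_k^{(n)}$ for $n\ge n_0$. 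The set $D=\{x : \rank(\dd\nu_{f_i,f_j}/\dd\nu)(x)=d\}$ has $\nu(D)>0$, and by the Lebesgue density / martingale-convergence structure of the refining partitions (A2)~(a),(b), for $\nu$-a.e.\ $x\in D$ the cell $U_{k_n(x)}^{(n)}$ containing $x$ shrinks to $x$ and concentrates the energy of the $f_i$ in a $d$-dimensional way.

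Next I would perform the linear normalization on a well-chosen cell at each scale. Fix such a density point $x$ and, for each large $n$, set $k=k_n(x)$, $U=U_k^{(n)}$. On $U$ the restrictions $f_i|_U$ are harmonic (they lie in $\cH_U$). Using the symmetric positive-semidefinite matrix $G^{(n)}=\bigl(\nu_{f_i,f_j}(U)\bigr)_{i,j}$, which has rank $\ge d$ for $n$ large by the density-point choice, I orthonormalize: let $\bm{g}^{(n)}=(G^{(n)})^{-1/2}(f_1|_U,\dots,f_d|_U)^{\top}$ (replacing $G^{(n)}$ by its restriction to a rank-$d$ block if necessary). Then $\nu_{g_i^{(n)},g_j^{(n)}}(U)=\delta_{ij}$, so after averaging as in \eqref{eq:nubfh} we obtain $\nu_{\bm g^{(n)}}(U)=1$. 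Setting $h_i^{(k)}=g_i^{(n)}$, $n_k=n$, $\lm_k=k$ gives harmonic, bounded functions with $\nu_{\bm h^{(k)}}(U_{\lm_k}^{(n_k)})=1$, establishing the first two bullets. The third bullet, that $0$ lies in the support of the push-forward of $\mu|_U$, follows after subtracting the $\mu$-mean of $\bm h^{(k)}$ over $U$ (a harmonic shift allowed by Lemma~\ref{lem:3.4}), so that the barycenter is $0$ and hence $0$ is in the support.

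The heart of the argument is the two weak-convergence bullets, and this is where the main obstacle lies. After the affine rescaling by $\a_k$ (so that $\a_k\bm h^{(k)}$ is bounded by $1$ in sup-norm on $U$), the pushed-forward energy measures $(\a_k\bm h^{(k)})_*(e_k^2|\nu_{h_i^{(k)},h_j^{(k)}}|)$ all live on the fixed compact set $\overline{B(0,1)}\subset\R^d$; hence by Banach--Alaoglu/Prokhorov they are weak-$*$ precompact, and passing to a further subsequence of $\{n_k\}$ I may assume each converges. The real work is to identify the limit as $\xi\,\cL^d$ for $i=j$ and $0$ for $i\ne j$. The off-diagonal vanishing should come from the orthonormalization $\nu_{h_i^{(k)},h_j^{(k)}}(U)=\delta_{ij}$ combined with the derivation rule \eqref{eq:derivation}: testing the pushed-forward mutual measures against $C^1_b$ functions $\Phi$ on $\R^d$ and using Proposition~\ref{prop:sl}~(v) converts integrals against $\nu_{h_i,h_j}$ into integrals of $\partial_i\Phi\,\partial_j(\cdot)$-type expressions whose cross terms must vanish in the limit by the asymptotic orthogonality; the absolute continuity $\xi\,\dd\cL^d$ and the Sobolev regularity $\sqrt{\xi}\in W^{1,2}$ should be extracted from a Rademacher/Sobolev argument on the limiting ``coordinate map,'' using that the $\a_k\bm h^{(k)}$ converge (after push-forward) to an essentially $\cL^d$-distributed object because they realize an approximate isometry between the energy structure on $U$ and Euclidean $\R^d$. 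I expect the delicate points to be (a) justifying that the weak limit of the diagonal measures is mutually absolutely continuous with $\cL^d$ rather than concentrating on a lower-dimensional set --- this is precisely where the rank-$d$ (density-point) selection is used --- and (b) the $W^{1,2}$-regularity of $\sqrt\xi$, which requires a uniform energy bound transported through the push-forward, obtained from $\nu_{\bm h^{(k)}}(U)=1$ and the Hilbert--Schmidt control in Proposition~\ref{prop:sl}~(i)--(ii). The final bullet, weak convergence of $(\a_k\bm h^{(k)})_*(\a_k^{-2}\nu_{e_k})$ to some finite $\rho$, follows from the same precompactness once I verify the total masses $\a_k^{-2}\nu_{e_k}(U)=2\a_k^{-2}\cE(e_k,e_k)=2\a_k^{-2}\Cp(V_{\lm_k}^{(k)};U_{\lm_k}^{(k)})$ stay bounded; this boundedness is exactly what conditions (A3)~(a),(b) (the parts of (A3) we are allowed to use) deliver, by comparing $\Cp(V;U)(\muosc_U \bm h^{(k)})^2$ with $\nu_{\bm h^{(k)}}(U)=1$ and noting $\muosc_U(\a_k\bm h^{(k)})\asymp 1$ by the definition \eqref{eq:akdef} of $\a_k$.
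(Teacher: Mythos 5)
Your overall scaffolding (use the index characterization to extract $d$ functions from $\cH_*\cap L^\infty(K,\mu)$ whose density matrix has rank $d$ on a $\nu$-positive set, localize to shrinking cells, normalize, push forward, and extract weak limits by compactness on the closed unit ball) matches the paper's outline, and your mass bound for $(\a_k\bfh^{(k)})_*(\a_k^{-2}\nu_{e_k})$ via (A3)(b) is exactly the paper's argument for the last bullet. But there are two genuine gaps, located precisely at the points you yourself flag as delicate. First, your normalization is the integrated one, $\nu_{h_i^{(k)},h_j^{(k)}}(U)=\dl_{ij}$, obtained from the Gram matrix $\bigl(\nu_{f_i,f_j}(U)\bigr)_{i,j}$. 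This cannot yield the off-diagonal bullet, because the claim concerns the push-forwards of the \emph{total variation} measures $e_k^2|\nu_{h_i^{(k)},h_j^{(k)}}|$, and $\nu_{h_i,h_j}(U)=0$ is perfectly compatible with $|\nu_{h_i,h_j}|(U)$ staying bounded away from $0$. What is needed is pointwise control: the density matrix $\bigl(\dd\nu_{h_i^{(k)},h_j^{(k)}}/\dd\nu_{\bfh^{(k)}}\bigr)_{i,j}$ must be uniformly within $1/k$ of the identity on a subset $B_k$ of the cell carrying all but $2^{-k}$ of the mass $\nu_{\bfh^{(k)}}(U_{\lm_k}^{(n_k)})$. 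The paper achieves this by (i) a linear change of basis making the identity matrix a point of the support of the push-forward of $\nu_\bff$ under the matrix-valued density map $\Phi_\bff$, and (ii) a martingale convergence argument for the conditional expectations of $\bfone_{B_k}$ along the refining partitions of (A2)(a),(b), which locates cells on which $\nu_\bff$ concentrates on $B_k$. Neither ingredient appears in your proposal, and the ``asymptotic orthogonality'' you invoke is precisely what the averaged normalization fails to supply.

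Second, the absolute continuity of the limit with respect to $\cL^d$ and the regularity $\sqrt{\xi}\in W^{1,2}(\R^d)$ are not established by your sketch; a ``Rademacher/Sobolev argument on the limiting coordinate map'' is not the mechanism. The paper derives, from the harmonicity of $h_i^{(k)}$ on $U_{\lm_k}^{(n_k)}$ (Lemma~\ref{lem:3.2}) and the derivation property (Proposition~\ref{prop:sl}~(v)), the identity $0=2\cE(\ph(\a_k\bfh^{(k)})e_k^2,h_i^{(k)})$ for $\ph\in C^1_b(\R^d)$; dividing by $\a_k$ and passing to the limit (using (A3)(b) to bound the total variation of $\a_k^{-1}\nu_{e_k^2,h_i^{(k)}}$ uniformly in $k$) yields $\partial_i\kp=\hat\kp_i$ in the distributional sense with $\hat\kp_i$ a finite signed measure, whence $\kp\ll\cL^d$ by the Bouleau--Hirsch lemma; the $W^{1,2}$-bound on $\sqrt{\xi}$ then follows from the Cauchy--Schwarz estimate $|\int_{\R^d}\ph\,\dd\hat\kp_i|\le2(\int_{\R^d}\ph\,\dd\kp)^{1/2}(\int_{\R^d}\ph\,\dd\rho)^{1/2}$. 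This integration-by-parts chain is the engine of the whole proposition and is absent from your argument. A smaller point: subtracting the $\mu$-barycenter does not guarantee that $0$ lies in the support of $(\bfh^{(k)}|_{U})_*(\mu|_{U})$, since the barycenter of a measure need not belong to its support; the paper instead subtracts constants chosen so that $0$ is literally a support point.
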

\begin{proof}
We divide the proof into five steps.
\smallskip

\textbf{Step 1 (Selecting harmonic functions $\{f_i\}_{i=1}^d$). }We first construct a family of bounded harmonic functions $f_1,f_2,\dots,f_d$ that exhibit maximal linear independence in the sense of energy measures. These functions will serve as a local coordinate system in the blow-up argument below.
 
We fix a minimal energy-dominant measure $\nu$ for $(\cE,\cF)$.
We take a sequence of functions $\{\hat f_i\}_{i=1}^\infty$ from $\cH_*\cap L^\infty(K,\mu)$ such that $\{\hat f_i\mid i\in\N\}$ is dense in $\cF$. This is possible from Proposition~\ref{prop:dense} and the separability of $\cF$.
For each $i,j\in\N$, let $\hat Z^{i,j}$ denote a $\nu$-version of the Radon--Nikodym density $\dd\nu_{\hat f_i,\hat f_j}/\dd\nu$. 
We may assume that $(\hat Z^{i,j}(x))_{i,j=1}^N$ is symmetric and non-negative definite matrix for every $x\in K$ and every $N\in\N$.
From \cite[Proposition~2.10]{Hi10}, 
\[
d_{\mathrm{m}}=\nuesssup_{x\in K} \sup_{N\in\N}\rank(\hat Z^{i,j}(x))_{i,j=1}^N.
\]
Since $d\le d_{\mathrm{m}}$, there exists $N\in\N$ such that $\nu(\{x\in K\mid \rank(\hat Z^{i,j}(x))_{i,j=1}^N\ge d\})>0$. Therefore, there exist $1\le \a_1<\a_2<\dots<\a_d\le N$ such that $\nu(\hat B)>0$ with $\hat B=\{x\in K\mid (\hat Z^{\a_i,\a_j}(x))_{i,j=1}^d\text{ is invertible}\}$.
We write $Z^{i,j}$ for $\hat Z^{\a_i,\a_j}$ and $f_i$ for $\hat f_{\a_i}$, respectively. There exists some $m_0\in\N$ such that $f_i\in \cH_{m_0}$ for every $i=1,2,\dots,d$.
We write $\bff=(f_1,f_2,\dots,f_d)$ and $\nu_\bff=(1/d)\sum_{i=1}^d\nu_{f_i}$. Then,
\[
\frac{\dd\nu_\bff}{\dd\nu}(x)=\frac1d \tr\bigl(Z^{i,j}(x)\bigr)_{i,j=1}^d,\quad \nu\text{-a.e.\,}x.
\]
The right-hand side is positive if $\bigl(Z^{i,j}(x)\bigr)_{i,j=1}^d$ is invertible. Therefore, $\hat B\subset \{\dd\nu_\bff/\dd\nu>0\}$ up to $\nu$-null set. Thus, $\nu_\bff(\hat B)>0$ from $\nu(\hat B)>0$.
Let
\[
\Mat(d):=\{\text{all real square matrices of size $d$}\}\cong\R^{d\times d}.
\]
We define a $\Mat(d)$-valued function $\Phi_\bff$ on $K$ as
\[
\Phi_\bff:=\begin{cases}
\left(\dfrac{\dd\nu_{f_i,f_j}}{\dd\nu_\bff}\right)_{i,j=1}^d&\text{on }\left\{\dfrac{\dd\nu_\bff}{\dd\nu}>0\right\}\\
O&\text{otherwise.}
\end{cases}
\]
Then, since
\[
\Phi_\bff=\left(Z^{i,j}(x)\middle/\frac{\dd\nu_\bff}{\dd\nu}(x)\right)_{i,j=1}^d
\quad\text{on }\left\{\frac{\dd\nu_\bff}{\dd\nu}>0\right\},
\]
it holds that $\nu_\bff(\{x\in K\mid\Phi_\bff(x)\text{ is invertible}\})>0$.

Take $a>0$ such that $\nu_\bff(\{x\in K\mid \det\Phi_\bff(x)\ge a\})>0$.
Let $B=\{x\in K\mid \det\Phi_\bff(x)\ge a\}\setminus K_*$, where $K_*:=\bigcup_{n\in\N}\left(K\setminus \bigcup_{\lm\in \Lm_n}U^{(n)}_\lm\right)$ is a $(\mu+\nu)$-null set. Then, $\nu_\bff(B)>0$.
Take an element $L$ from the support of the non-zero measure $(\Phi_\bff|_B)_*(\nu_\bff|_B)$ on $\Mat(d)$.
\smallskip

\textbf{Step 2 (Reselection of $\{f_i\}_{i=1}^d$). }We will prove that we may assume $L=I$, the identity matrix, by changing $f_1,\dots,f_d$ suitably.
Since $L$ is symmetric and positive-definite with $\det L\ge a$, there exists an orthogonal matrix $\Gm=(\gm_{ij})_{i,j=1}^d$ diagonalizing $L$ such that
\[
\Gm^{\top} L\Gm=\begin{pmatrix}\lm_1&&\raisebox{-1.7ex}[0pt][0pt]{\phantom{$\lm_d$}\llap{\smash{\huge$0$}}}\\&\ddots\\\mbox{\rlap{\smash{\huge$0$}}\quad}&&\lm_d\end{pmatrix}\quad\mbox{with }\lm_i>0,\ i=1,2,\dots,d.
\]
Define $\check\bff=(\check f_1,\check f_2,\dots,\check f_d)\in(\cH_{m_0})^d$ by
$
  \check f_i=\lm_i^{-1/2}\sum_{k=1}^d \gm_{ki}f_k$ for $i=1,2,\dots,d$.
Then,
\[
  \nu_{\check f_i,\check f_j}=\lm_i^{-1/2}\lm_j^{-1/2}\sum_{k,l=1}^d \gm_{ki}\gm_{lj}\,\nu_{f_k,f_l},\qquad\text{$i,j=1,2,\dots,d$},
\]
which implies that
\[
 \left(\frac{\dd\nu_{\check f_i,\check f_j}}{\dd\nu_{\bff}}(x)\right)_{i,j=1}^d
 =\Lm^{-1/2}\Gm^{\top}\Phi_\bff(x) \Gm\Lm^{-1/2},\quad\text{$\nu_{\bff}\mbox{-a.e.}\,x$}.
\]
Here, $\Lm^{-1/2}$ denotes the diagonal matrix whose diagonal elements are $\lm_1^{-1/2},\lm_2^{-1/2},\allowbreak\dots,\lm_d^{-1/2}$.

For $M\in \Mat(d)$ and $r>0$, define
\[
D(M,r)=\left\{A\in \Mat(d)\mid \|A-M\|_{\HS}<r\right\}.
\]
By noting the identity $\Lm^{-1/2}\Gm^{\top}L \Gm\Lm^{-1/2}=I$, for each $\eps\in(0,1)$ there exists $\dl>0$ such that
\begin{equation}\label{eq:L1}
  \left(\frac{\dd\nu_{\check f_i,\check f_j}}{\dd\nu_{\bff}}(x)\right)_{i,j=1}^d\in D(I,\eps)\quad \text{for every }x\in(\Phi_\bff)^{-1}(D(L,\dl)).
\end{equation}
In particular, for $x\in (\Phi_\bff)^{-1}(D(L,\dl))$,
\[
\frac{\dd\nu_{\check f_i}}{\dd\nu_{\bff}}(x)\in(1-\eps,1+\eps),
\qquad i=1,2,\dots,d,
\]
which implies 
\begin{equation}\label{eq:L2}
\frac{\dd\nu_{\check \bff}}{\dd\nu_{\bff}}(x)\in(1-\eps,1+\eps),
\end{equation}
where $\nu_{\check \bff}=(1/d)\sum_{i=1}^d \nu_{\check f_i}$.
Note that $\nu_{\check \bff}\bigl((\Phi_\bff)^{-1}(D(L,\dl))\bigr)>0$ due to the choice of $L$.
Combining \eqref{eq:L1}, \eqref{eq:L2}, and the inequality
\[
 \|\alpha^{-1}(I+A)-I\|_\HS\le |\a^{-1}-1|\sqrt{d}+\a^{-1}\|A\|_\HS,
 \qquad A\in\Mat(d),\ \a>0,
\]
we can confirm that
\begin{align}
\Phi_{\check\bff}(x)&:=\left(\frac{\dd\nu_{\check f_i,\check f_j}}{\dd\nu_{\check\bff}}(x)\right)_{i,j=1}^d
=\left(\frac{\dd\nu_{\check \bff}}{\dd\nu_{\bff}}(x)\right)^{-1}\left(\frac{\dd\nu_{\check f_i,\check f_j}}{\dd\nu_{\bff}}(x)\right)_{i,j=1}^d\nonumber\\*
&\in D\left(I,\left[\left\{(1-\eps)^{-1}-1\}\vee\{1-(1+\eps)^{-1}\right\}\right]\sqrt{d}+(1-\eps)^{-1}\eps\right),\nonumber\\*
&\hspace{15em}x\in(\Phi_\bff)^{-1}(D(L,\dl)).
\label{eq:L3}
\end{align}
Since $\eps>0$ is arbitrary, \eqref{eq:L2} and \eqref{eq:L3} imply that $I$ belongs to the support of the non-zero measure $(\Phi_{\check\bff})_*\nu_{\check\bff}$.
Therefore, by considering $\check f_1,\check f_2,\dots,\check f_d$ instead of $f_1,f_2,\dots,f_d$ and reselecting $a$ ($a=1/2$ will suffice), we may  assume that $L=I$. In what follows, we always assume $L=I$.
\smallskip

\textbf{Step 3 (Introduction of scaled functions $h_i^{(k)}$). }
For $k\in\N$, let 
\begin{equation}\label{eq:Bk}
B_k:=(\Phi_\bff|_B)^{-1}(D(I,1/k))\subset B.
\end{equation}
Then, $\nu_\bff(B_k)=\bigl((\Phi_\bff|_B)_*(\nu_\bff|_B)\bigr)(D(I,1/k))>0$.

For $n\in\N$, let $\cG^{(n)}$ be the sub $\sg$-field on $K$ generated by $\{U_\lm^{(n)}\mid \lm\in \Lm_n\}$.
We define a function $Y^{(k,n)}$ on $K$ as
\[
Y^{(k,n)}(x):=\begin{cases}
\dfrac{\nu_\bff(U_\lm^{(n)}\cap B_k)}{\nu_\bff(U_\lm^{(n)})} & \text{if $x\in U_\lm^{(n)}$ and $\nu_\bff(U_\lm^{(n)})>0$ for some $\lm\in\Lm_n$}\\
0& \text{otherwise.}
\end{cases}
\]
Then, $Y^{(k,n)}$ is equal to the conditional expectation of $\bfone_{B_k}$ given $\cG^{(n)}$ with respect to $\hat\nu_\bff:=\nu_\bff(K)^{-1}\nu_\bff$. From Assumption (A2)~(a), $\{Y^{(k,n)}\}_{n=1}^\infty$ is a $\{\cG^{(n)}\}_{n=1}^\infty$-martingale with respect to $\hat\nu_\bff$. From the martingale convergence theorem, $Y^{(k,n)}$ converges to $\bfone_{B_k}$ $\nu_\bff$-a.e.\ as $n\to\infty$ from Assumption (A2)~(b).
In particular, for each $k\in\N$, there exist $x_k\in B_k$ and $N_k\in\N$ such that $Y^{(k,n)}(x_k)\ge 1-2^{-k}$ for all $n\ge N_k$. Then, there exists an increasing sequence $\{n_k\}_{k=1}^\infty$ of natural numbers such that $Y^{(k,n_k)}(x_k)\ge 1-2^{-k}$ for all $k\in\N$.
We write $\lm_k$ for $\lm$ such that $x_k\in U_\lm^{(n_k)}$.

Let $k\in\N$. Take $\psi_k\in\cF\cap C(K)$ such that $0\le\psi_k\le1$ on $K$ and $\psi_k=1$ on $U_{\lm_k}^{(n_k)}$. We write $h_i^{(k)}$ $(k\in\N,\ i=1,2,\dots,d)$ for $c_{k}(f_i-d_{i,k}\psi_k)$ and let $\bfh^{(k)}=(h_1^{(k)},h_2^{(k)},\dots,h_d^{(k)})$, where real constants $c_{k}$ (depending only on $k$) and $d_{i,k}$ (depending on $i$ and $k$) are selected so that 
\begin{equation}\label{eq:shift}
\text{$0$ belongs to the support of $(\bfh^{(k)}|_{U_{\lm_k}^{(n_k)}})_*(\mu|_{U_{\lm_k}^{(n_k)}})$} 
\end{equation}
and
\begin{equation}\label{eq:normalize}
\nu_{\bfh^{(k)}}(U_{\lm_k}^{(n_k)})=1.
\end{equation}
Here, recall \eqref{eq:nubfh}.
Note that \eqref{eq:shift} implies
\begin{equation}\label{eq:suposc}
\muesssup_{U_{\lm_k}^{(n_k)}}{}\bigl|h_i^{(k)}\bigr|\le\muosc_{U_{\lm_k}^{(n_k)}}h_i^{(k)},
\quad i=1,2,\dots,d.
\end{equation}
From \eqref{eq:normalize}, for $k\in\N$ and $i,j=1,2,\dots,d$,
\begin{equation}\label{eq:normalizeij}
|\nu_{h_i^{(k)},h_j^{(k)}}|(U_{\lm_k}^{(n_k)})
\le \nu_{h_i^{(k)}}(U_{\lm_k}^{(n_k)})^{1/2}\nu_{h_j^{(k)}}(U_{\lm_k}^{(n_k)})^{1/2}
\le d.
\end{equation}
Moreover,
\[
1-2^{-k}
\le Y^{(k,n_k)}(x_k)
=\frac{\nu_{\bfh^{(k)}}(U_{\lm_k}^{(n_k)}\cap B_k)}{\nu_{\bfh^{(k)}}(U_{\lm_k}^{(n_k)})}
=\nu_{\bfh^{(k)}}(U_{\lm_k}^{(n_k)}\cap B_k).
\]
Therefore,
\begin{equation}\label{eq:Y}
\nu_{\bfh^{(k)}}(U_{\lm_k}^{(n_k)}\setminus B_k)\le 2^{-k}.
\end{equation}
 
Let $e_k$ be defined as in \eqref{eq:ekdef}.
That is, $0\le e_k\le1$, $e_k=1$ q.e.\ on $V_{\lm_k}^{(n_k)}$, $e_k=0$ q.e.\ on $K\setminus U_{\lm_k}^{(n_k)}$, and $\cE(e_k,e_k)=\Cp(V_{\lm_k}^{(n_k)};U_{\lm_k}^{(n_k)})$.
Define $a_k$ as in \eqref{eq:akdef}.
Note that $\a_k$ is finite by $\nu_{\bfh^{(k)}}(U_{\lm_k}^{(n_k)})=1>0$ and Proposition~\ref{prop:sl}~(iv).
Then, from Lemma~\ref{lem:aeqe},
\begin{equation}\label{eq:akbfh}
\a_k|\bfh^{(k)}|\le 1
\quad\text{q.e.\ on }U_{\lm_k}^{(n_k)}.
\end{equation}
Also, from \eqref{eq:suposc},
\begin{equation}\label{eq:ak}
\a_k^{-2}\le \sum_{j=1}^d \muesssup_{U_{\lm_k}^{(n_k)}}{}\bigl|h_j^{(k)}\bigr|^2
\le \sum_{j=1}^d \biggl(\muosc_{U_{\lm_k}^{(n_k)}}h_j^{(k)}\biggr)^2.
\end{equation}

Let $\ph\in C^1_b(\R^d)$ with $\ph(0,\dots,0)=0$, and take $g=\ph(\a_k\bfh^{(k)})$.
For each $i=1,2,\dots,d$,
\begin{align*}
0&=2\cE(g e_k^2,h_i^{(k)})\\
&\qquad\parbox{30em}{(because Lemma~\ref{lem:3.2} can be applied, since $g e_k^2=0$ q.e.\ on $K\setminus U_{\lm_k}^{(n_k)}$ and $h_i^{(k)}$ is harmonic on $U_{\lm_k}^{(n_k)}$)}\\
&=\int_{K}\dd\nu_{g e_k^2,h_i^{(k)}}\quad\text{(from Proposition~\ref{prop:sl}~(iii))}\\
&=\int_{K}g\,\dd\nu_{e_k^2,h_i^{(k)}}+\int_{K} e_k^2\,\dd\nu_{g,h_i^{(k)}}\quad\text{(from Proposition~\ref{prop:sl}~(v))}\\
&=\int_{K}g\,\dd\nu_{e_k^2,h_i^{(k)}}+\sum_{j=1}^d \int_{K} e_k^2\frac{\partial \ph}{\partial x_j}(\a_k\bfh^{(k)})\a_k\,\dd\nu_{h_j^{(k)},h_i^{(k)}}\\
&\qquad\text{(from Proposition~\ref{prop:sl}~(v))}\\
&=\int_{\R^d}\ph\,\dd\bigl((\a_k\bfh^{(k)})_*\nu_{e_k^2,h_i^{(k)}}\bigr)
+\int_{\R^d}\frac{\partial\ph}{\partial x_i}\a_k\,\dd\bigl((\a_k\bfh^{(k)})_*(e_k^2\nu_{h_i^{(k)}})\bigr)\\*
&\quad+\sum_{j:j\ne i}\int_{\R^d}\frac{\partial\ph}{\partial x_j}\a_k\,\dd\bigl((\a_k\bfh^{(k)})_*(e_k^2\nu_{h_i^{(k)},h_j^{(k)}})\bigr).
\end{align*}
Dividing both sides by $\a_k$, we obtain
\begin{align}
0&=\int_{\R^d}\ph\,\dd\bigl((\a_k\bfh^{(k)})_*(\a_k^{-1}\nu_{e_k^2,h_i^{(k)}})\bigr)
+\int_{\R^d}\frac{\partial\ph}{\partial x_i}\,\dd\bigl((\a_k\bfh^{(k)})_*(e_k^2\nu_{h_i^{(k)}})\bigr)\nonumber\\*
&\quad+\sum_{j:j\ne i}\int_{\R^d}\frac{\partial\ph}{\partial x_j}\,\dd\bigl((\a_k\bfh^{(k)})_*(e_k^2\nu_{h_i^{(k)},h_j^{(k)}})\bigr).
\label{eq:phi}
\end{align}

\textbf{Step 4 (Proof of the absolute continuity of some limit measure). }
We consider taking limits of the right-hand side of \eqref{eq:phi} as $k\to\infty$.
For $i=1,2,\dots,d$, we have
\begin{align*}
\left|\a_k^{-1}\nu_{e_k^2,h_i^{(k)}}\right|(K)
&=\a_k^{-1}\left|\nu_{e_k^2,h_i^{(k)}}\right|(U_{\lm_k}^{(n_k)})\\
&\le \a_k^{-1}\nu_{e_k^2}(U_{\lm_k}^{(n_k)})^{1/2}\nu_{h_i^{(k)}}(U_{\lm_k}^{(n_k)})^{1/2}\quad\text{(from Proposition~\ref{prop:sl}~(i))}\\
&\le \a_k^{-1}\left(\int_{U_{\lm_k}^{(n_k)}}4e_k^2\,\dd\nu_{e_k}\right)^{1/2}d^{1/2}\quad\text{(from \eqref{eq:normalizeij})}\\
&\le 2\a_k^{-1}\nu_{e_k}(K)^{1/2}d^{1/2}\\
&= 2\sqrt{2d}\a_k^{-1}\cE(e_k,e_k)^{1/2}\\
&\le 2\sqrt{2d}\Biggl\{\sum_{j=1}^d\biggl(\muosc_{U_{\lm_k}^{(n_k)}}h_i\biggr)^2\Cp(V_{\lm_k}^{(n_k)};U_{\lm_k}^{(n_k)})\Biggr\}^{1/2}\quad\text{(from \eqref{eq:ak})}\\
&\le 2\sqrt{2d}\Biggl(\sum_{j=1}^d C\nu_{h_i}(U_{\lm_k}^{(n_k)})\Biggr)^{1/2}\qquad\text{(from Assumption (A3) (b))}\\
&= 2d\sqrt{2C}.
\end{align*}
Also, for $i,j=1,2,\dots,d$, we have
\[
e_k^2|\nu_{h_i^{(k)},h_j^{(k)}}|(K)\le |\nu_{h_i^{(k)},h_j^{(k)}}|(U_{\lm_k}^{(n_k)})
\le d
\]
from \eqref{eq:normalizeij}.
Thus, the total masses of the measures $|\a_k^{-1}\nu_{e_k^2,h_i^{(k)}}|$ and $|e_k^2\nu_{h_i^{(k)},h_j^{(k)}}|$ are bounded in $i$, $j$, and $k$.
Since $\a_{k} \bfh^{(k)}(x)$ belongs to the closed unit ball $W$ of $\R^d$ for q.e.\ $x\in U_{\lm_{k}}^{(n_{k})}$, we can take a subsequence $\{k'\}$ of $\{k\}$ and finite Borel measures $\kp_{i,j}$ $(i,j=1,2,\dots,d)$ and signed measures $\hat\kp_i$ $(i=1,2,\dots,d)$ on $\R^d$ such that the supports of $\kp_{i,j}$ and $\hat\kp_i$ are all included in $W$ and
\begin{align*}
\Bigl(\a_{k'} \bfh^{(k')}|_{U_{\lm_{k'}}^{(n_{k'})}}\Bigr)_*\bigl(e_{k'}^2|\nu_{h_i^{(k')},h_j^{(k')}}|\bigr)&\to \kp_{i,j}\quad (i,j=1,2,\dots,d),\\
\Bigl(\a_{k'} \bfh^{(k')}|_{U_{\lm_{k'}}^{(n_{k'})}}\Bigr)_*\bigl(\a_{k'}^{-1}\nu_{e_{k'}^2,h_i^{(k')}}\bigr) &\to \hat\kp_i\quad (i=1,2,\dots,d),
\end{align*}
as $k'\to\infty$.
Here, the convergences mean the weak convergence on the space of signed Borel measures on $\R^d$. 
We define $\kp=(1/d)\sum_{i=1}^d \kp_{i,i}$.
Then,
\[
\Bigl(\a_{k'} \bfh^{(k')}|_{U_{\lm_{k'}}^{(n_{k'})}}\Bigr)_*\bigl(e_{k'}^2\nu_{\bfh^{(k')}}\bigr)
=\frac1d\sum_{i=1}^d\Bigl(\a_{k'} \bfh^{(k')}|_{U_{\lm_{k'}}^{(n_{k'})}}\Bigr)_*\bigl(e_{k'}^2\nu_{h_i^{(k')}}\bigr)
\to \kp\quad\text{as $k'\to\infty$}.
\]
From Assumption~(A3)~(a), 
\[
\int_K e_{k'}^2\,\dd\nu_{\bfh^{(k')}}\ge \nu_{\bfh^{(k')}}(V_{\lm_{k'}}^{(n_{k'})})\ge C^{-1}\nu_{\bfh^{(k')}}(U_{\lm_{k'}}^{(n_{k'})})=C^{-1},
\]
which implies $\kp(\R^d)\ge C^{-1}>0$.

Let us recall \eqref{eq:Bk}. On $B_{k'}$, it holds that $\|\Phi_{\bfh^{(k')}}-I\|_{\HS}<1/k'$, which implies that
\[
\left|\left|\frac{\dd\nu_{h_i^{(k')},h_j^{(k')}}}{\dd\nu_{\bfh^{(k')}}}\right|-\dl_{ij}\right|
<\frac1{k'}
\quad\text{$\nu_{\bfh^{(k)}}$-a.e.\ on }B_{k'},
\qquad i,j=1,2,\dots,d.
\]
Let $F\in C_c(\R^d)$ with $F\ge0$. For $i=1,2,\dots,d$, we have
\begin{equation}\label{eq:F}
\int_{\R^d} F\,\dd\Bigl(\bigl(\a_{k'}\bfh^{(k')}|_{U_{\lm_{k'}}^{(n_{k'})}}\bigr)_*(e_{k'}^2\nu_{h_i^{(k')}})\Bigr)
=\int_{U_{\lm_{k'}}^{(n_{k'})}}\{F\circ(\a_{k'}\bfh^{(k')})\}e_{k'}^2\,\dd\nu_{h_i^{(k')}}.
\end{equation}
The left-hand side of \eqref{eq:F} converges to $\int_{\R^d}F\,\dd\kp_{i,i}$ as $k'\to\infty$, while
\begin{align*}
&\text{RHS of \eqref{eq:F}}\\*
&\ge\left(1-\frac1{k'}\right)\int_{U_{\lm_{k'}}^{(n_{k'})}\cap B_{k'}}\{F\circ(\a_{k'}\bfh^{(k')})\}e_{k'}^2\,\dd\nu_{\bfh^{(k')}}\\
&\ge\left(1-\frac1{k'}\right)\int_K \{F\circ(\a_{k'}\bfh^{(k')})\}e_{k'}^2\,\dd\nu_{\bfh^{(k')}}
-\left(1-\frac1{k'}\right)\|F\|_\infty\nu_{\bfh^{(k')}}(U_{\lm_{k'}}^{(n_{k'})}\setminus B_{k'})\\
&\ge\left(1-\frac1{k'}\right)\int_{\R^d} F\,\dd\bigl((\a_{k'}\bfh^{(k')})_*(e_{k'}^2\nu_{\bfh^{(k')}})\bigr)
-\left(1-\frac1{k'}\right)\|F\|_\infty 2^{-k'} \qquad\text{(from \eqref{eq:Y})}\\
&\to \int_{\R^d}F\,\dd\kp \quad\text{as }k'\to\infty
\end{align*}
and
\begin{align*}
&\text{RHS of \eqref{eq:F}}\\*
&\le\left(1+\frac1{k'}\right)\int_{U_{\lm_{k'}}^{(n_{k'})}\cap B_{k'}}\{F\circ(\a_{k'}\bfh^{(k')})\}e_{k'}^2\,\dd\nu_{\bfh^{(k')}}
+d\|F\|_\infty\nu_{\bfh^{(k')}}(U_{\lm_{k'}}^{(n_{k'})}\setminus B_{k'})\\
&\le\left(1+\frac1{k'}\right)\Biggl[\int_{U_{\lm_{k'}}^{(n_{k'})}}\{F\circ(\a_{k'}\bfh^{(k')})\}e_{k'}^2\,\dd\nu_{\bfh^{(k')}}
+\|F\|_\infty\nu_{\bfh^{(k')}}(U_{\lm_{k'}}^{(n_{k'})}\setminus B_{k'})\Biggr]\\
&\quad+d\|F\|_\infty\nu_{\bfh^{(k')}}(U_{\lm_{k'}}^{(n_{k'})}\setminus B_{k'})\\
&\le \left(1+\frac1{k'}\right)\int_{\R^d} F\,\dd\bigl((\a_{k'}\bfh^{(k')})_*(e_{k'}^2\nu_{\bfh^{(k')}})\bigr)
+\left(1+\frac1{k'}+d\right)\|F\|_\infty 2^{-k'}\qquad\text{(from \eqref{eq:Y})}\\
&\to \int_{\R^d}F\,\dd\kp \quad\text{as }k'\to\infty.
\end{align*}
Therefore, $\int_{\R^d}F\,\dd\kp_{i,i}=\int_{\R^d}F\,\dd\kp$.
This implies that $\kp_{i,i}=\kp$.
Moreover, for $i,j=1,2,\dots,d$ with $i\ne j$,
\begin{align}
&\int_{\R^d} F\,\dd\Bigl(\bigl(\a_{k'}\bfh^{(k')}|_{U_{\lm_{k'}}^{(n_{k'})}}\bigr)_*\bigl(e_{k'}^2\bigl|\nu_{h_i^{(k')},h_j^{(k')}}\bigr|\bigr)\Bigr)\nonumber\\*
&=\int_{U_{\lm_{k'}}^{(n_{k'})}}\{F\circ(\a_{k'}\bfh^{(k')})\}e_{k'}^2\,\dd\bigl|\nu_{h_i^{(k')},h_j^{(k')}}\bigr|.\label{eq:F2}
\end{align}
The left-hand side of \eqref{eq:F2} converges to $\int_{\R^d}F\,\dd\kp_{i,j}$ as $k'\to\infty$, while
\begin{align*}
0&\le \text{RHS of \eqref{eq:F2}}\\
&\le \frac1{k'}\int_{U_{\lm_{k'}}^{(n_{k'})}\cap B_{k'}}\{F\circ(\a_{k'}\bfh^{(k')})\}e_{k'}^2\,\dd\nu_{\bfh^{(k')}}
+\|F\|_\infty\bigl|\nu_{h_i^{(k')},h_j^{(k')}}\bigr|(U_{\lm_{k'}}^{(n_{k'})}\setminus B_{k'})\\
&\le\frac1{k'}\|F\|_\infty\nu_{\bfh^{(k')}}(U_{\lm_{k'}}^{(n_{k'})}\cap B_{k'})+\|F\|_\infty\cdot d\nu_{\bfh^{(k')}}(U_{\lm_{k'}}^{(n_{k'})}\setminus B_{k'})\\
&\le \frac1{k'}\|F\|_\infty+d2^{-k'}\|F\|_\infty \qquad\text{(from \eqref{eq:Y})}\\
&\to 0 \quad\text{as }k'\to\infty.
\end{align*}
Therefore, $\int_{\R^d}F\,\dd\kp_{i,j}=0$. This implies that $\kp_{i,j}=0$.
In particular, the signed measure $\Bigl(\a_{k'} \bfh^{(k')}|_{U_{\lm_{k'}}^{(n_{k'})}}\Bigr)_*\bigl(e_{k'}^2\nu_{h_i^{(k')},h_j^{(k')}}\bigr)$ converges weakly to the zero measure.

Then, letting $k\to\infty$ along the subsequence $\{k'\}$ in \eqref{eq:phi}, we obtain that
\begin{equation}\label{eq:phi2}
0=\int_{\R^d}\ph\,\dd\hat\kp_i+\int_{\R^d}\frac{\partial\ph}{\partial x_i}\,\dd\kp_i
=\int_{\R^d}\ph\,\dd\hat\kp_i+\int_{\R^d}\frac{\partial\ph}{\partial x_i}\,\dd\kp.
\end{equation}
By noting that
\begin{align*}
\hat\kp_i(\R^d)&=\lim_{k'\to\infty}\a_{k'}^{-1}\Bigl(\bigl(\a_{k'} \bfh^{(k')}|_{U_{\lm_{k'}}^{(n_{k'})}}\bigr)_*\nu_{e_{k'}^2,h_i^{(k')}}\Bigr)(\R^d)\\
&=\lim_{k'\to\infty}\a_{k'}^{-1}\nu_{e_{k'}^2,h_i^{(k')}}(K)\\
&=\lim_{k'\to\infty}2\a_{k'}^{-1}\cE(e_{k'}^2,h_i^{(k')})\qquad\text{(from Proposition~\ref{prop:sl}~(iii))}\\
&=0,\qquad\text{(from Lemma~\ref{lem:3.2})}
\end{align*}
\eqref{eq:phi2} holds for all $\ph\in C^1_b(\R^d)$.
Therefore, $\frac{\partial}{\partial x_i}\kp=\hat\kp_i$ holds in the distribution sense.
From \cite[Chapter~I, Lemma~7.2.2.1]{BH91}, $\kp$ is absolutely continuous with respect to the $d$-dimensional Lebesgue measure $\cL^d$.
\smallskip

\textbf{Step 5 (Proof of the regularity of the limit measure). }
Let $\xi\in L^1(\R^d,\cL^d)$ denote the Radon--Nikodym derivative $\dd\kp/\dd\cL^d$.

Let $\ph\in C_c(\R^d)$ with $\ph\ge0$. For $i=1,2,\dots,d$, we have
\begin{align}
&\left|\int_{\R^d}\ph\,\dd\hat\kp_i\right|\nonumber\\*
&=\lim_{k'\to\infty}\left|\int_K \ph(\a_{k'}\bfh^{(k')})\a_{k'}^{-1}\,\dd\nu_{e_{k'}^2,h_i^{(k')}}\right|\nonumber\\
&=\lim_{k'\to\infty}\a_{k'}^{-1}\left|\int_K \ph(\a_{k'}\bfh^{(k')})\cdot 2e_{k'}\,\dd\nu_{e_{k'},h_i^{(k')}}\right|\nonumber\\
&\le\varliminf_{k'\to\infty} 2\a_{k'}^{-1}\left(\int_K \ph(\a_{k'}\bfh^{(k')})e_{k'}^2\,\dd\nu_{h_i^{(k')}}\right)^{1/2}\left(\int_K \ph(\a_{k'}\bfh^{(k')})\,\dd\nu_{e_{k'}}\right)^{1/2}.
\label{eq:kpi}
\end{align}
Here, we used Proposition~\ref{prop:sl} (ii) in the last line.
As $k'\to\infty$,
\begin{align*}
\int_K \ph(\a_{k'}\bfh^{(k')})e_{k'}^2\,\dd\nu_{h_i^{(k')}}
&=\int_{\R^d}\ph\,\dd\bigl((\a_{k'}\bfh^{(k')})_*(e_{k'}^2\,\dd\nu_{h_i^{(k')}})\bigr)\\
&\to \int_{\R^d}\ph\,\dd\kp_i=\int_{\R^d}\ph\,\dd\kp.
\end{align*}
We also have
\[
\a_{k'}^{-1}\left(\int_K \ph(\a_{k'}\bfh^{(k')})\,\dd\nu_{e_{k'}}\right)^{1/2}
=\left\{\int_{\R^d}\ph \,\dd\bigl((\a_{k'}\bfh^{(k')})_*(\a_{k'}^{-2}\nu_{e_{k'}})\bigr)\right\}^{1/2}.
\]
Here, we note that
\begin{align*}
\bigl(\a_{k'}^{-2}\nu_{e_{k'}}\bigr)(K)
&=2\a_{k'}^{-2}\cE(e_{k'},e_{k'})\\
&\le4\sum_{j=1}^d \biggl(\muosc_{U_{\lm_{k'}}^{(n_{k'})}}h_j^{(k')}\biggr)^2\Cp(V_{\lm_{k'}}^{(n_{k'})};U_{\lm_{k'}}^{(n_{k'})})
\quad\text{(from \eqref{eq:ak})}\\
&\le 4C\sum_{j=1}^d \nu_{h_j^{(k')}}(U_{\lm_{k'}}^{(n_{k'})})
\quad\text{(from Assumption~(A3) (b))}\\
&=4Cd\quad\text{(from \eqref{eq:normalize})}.
\end{align*}
Thus, $\bigl(\a_{k'}^{-2}\nu_{e_{k'}}\bigr)(K)$ is uniformly bounded in $k'$.
Keeping \eqref{eq:akbfh} in mind, we can extract a subsequence $\{k''\}$ of $\{k'\}$ such that $(\a_{k''}\bfh^{(k'')})_*(\a_{k''}^{-2}\nu_{e_{k''}})$ converges weakly as $k''\to\infty$ to some finite Borel measure $\rho$ on $\R^d$ whose support is included in the closed unit ball.
Therefore, \eqref{eq:kpi} implies
\begin{equation}\label{eq:kp}
\left|\int_{\R^d}\ph\,\dd\hat\kp_i\right|
\le 2\left(\int_{\R^d}\ph\,\dd\kp\right)^{1/2}\left(\int_{\R^d}\ph\,\dd\rho\right)^{1/2}.
\end{equation}
By taking a monotone limit, \eqref{eq:kp} holds for $\ph=\bfone_A$ with any open sets $A$. From the outer regularity of the measures $|\hat\kp_i|$, $\kp$, and $\rho$, \eqref{eq:kp} holds for $\ph=\bfone_A$ with any Borel sets $A$.
Therefore, $\hat\kp_i\ll\kp$, in particular, $\hat\kp_i\ll\cL^d$.
Thus, $\xi\in W^{1,1}(\R^d)$.

Let $\xi_i=\dd\hat\kp_i/\dd\cL^d$. From \cite[Section~1.6, Theorems~1.30--1.32]{EG25}, 
\[
|\xi_i|\le 2\xi^{1/2}\left(\frac{\dd\rho_{\mathrm{ac}}}{\dd\cL^d}\right)^{1/2}\quad \cL^d\text{-a.e.},
\]
where $\rho_{\mathrm{ac}}$ is the absolutely continuous part of $\rho$ in the Lebesgue decomposition.
For $\eps>0$, let $\gm_\eps(t)=\sqrt{t+\eps}-\sqrt{\eps}$, $t\ge0$.
Then,
\[
\left(\frac{\partial(\gm_\eps\circ\xi)}{\partial x_i}\right)^2
=\left(\frac1{2\sqrt{\xi+\eps}}\frac{\partial\xi}{\partial x_i}\right)^2
=\frac{\xi_i^2}{4(\xi+\eps)}
\le \frac{\dd\rho_{\mathrm{ac}}}{\dd\cL^d}.
\]
Thus,
\[
\int_{\R^d}\left(\frac{\partial(\gm_\eps\circ\xi)}{\partial x_i}\right)^2 \dd x
\le \rho_{\mathrm{ac}}(\R^d)\le\rho(\R^d)<\infty.
\]
Since $\gm_\eps(t)\to\sqrt{t}$ as $\eps\to0$, $\partial\sqrt{\xi}/{\partial x_i}\in L^2(\R^d,\cL^d)$. Therefore, we conclude that $\sqrt{\xi}\in W^{1,2}(\R^d)$.
\end{proof}
We now prove Theorem~\ref{th:main}.
\begin{proof}[Proof of Theorem~\protect\ref{th:main}]
From the non-triviality assumption (A1), the AF-martingale dimension $d_{\mathrm{m}}$ is greater than 0 from Theorem~\ref{th:char} and \cite[Proposition~2.11]{Hi10}. 
Therefore, it suffices to prove that $d_{\mathrm{m}}<2$.
We deduce a contradiction by assuming $d_{\mathrm{m}}\ge2$.
By applying Proposition~\ref{prop:key} with $d=2$, there exist a strict increasing sequence $\{n_k\}_{k=1}^\infty$ of natural numbers and $\lm_k\in \Lm_{n_k}$ and $\bfh^{(k)}=(h_1^{(k)},h_2^{(k)})\in(\cH_{U_{\lm_k}^{(n_k)}})^2$ for $k\in\N$ as claimed in Proposition~\ref{prop:key}.

For $z\in\R^2$ and $r>0$, let $B(z,r)$ (resp.\ $\bar B(z,r)$) denote the open (resp.\ closed) ball in $\R^2$ with center $z$ and radius $r$.

Let 
\[
N_1=\{z\in\R^2\mid \rho(\{z\})>0\}
\]
and
\[
N_2=\left\{z\in\R^2\mathrel{}\middle|\mathrel{} \sup_{r\in(0,1]}\fint_{\bar B(z,r)}\xi(w)\,\dd w=+\infty\right\}.
\]
Then, $N_1$ is at most countable, and $\cL^2(N_2)=0$ from Lebesgue's differentiation theorem.

Let
\[
\eta_{k}=\Bigl(\a_{k} \bfh^{(k)}|_{U_{\lm_k}^{(n_{k})}}\Bigr)_*\bigl(\bfone_{V_{\lm_k}^{(n_{k})}}\nu_{\bfh^{(k)}}\bigr).
\]
Since $\eta(\R^2)\le1$ and the support of $\eta_{k}$ is included in the closed unit ball $W$ in $\R^2$, we can take a subsequence $\{k'\}$ of $\{k\}$ such that $\eta_{k'}$ converges weakly to some Borel measure $\eta_\infty$ on $\R^2$ whose support is included in $W$.
Since 
\[
\eta_{k}(W)=\nu_{\bfh^{(k)}}(V_{\lm_k}^{(n_k)})\ge C^{-1}\nu_{\bfh^{(k)}}(U_{\lm_k}^{(n_k)})
=C^{-1}
\] 
from Assumption~(A3)~(a), $\eta_\infty(W)\ge C^{-1}>0$.
From the construction, $\eta_\infty\le \kp$; that is, $\eta_\infty(A)\le \kp(A)$ for any Borel sets $A$ of $\R^2$.
Therefore, $\eta_\infty$ is absolutely continuous with respect to $\cL^2$ and its Radon--Nikodym derivative $\xi_\infty$ satisfies that $\xi_\infty\le\xi$ $\cL^2$-a.e.
Let $S$ be the support of $\eta_\infty$. Since $\xi_\infty$ is a non-zero function, $\cL^2(S)>0$ from Lebesgue's differentiation theorem.
Choose a point $a$ from $S\setminus(N_1\cup N_2)\,(\ne\emptyset)$ and define
\[
b:=\sup_{r\in(0,1]}\fint_{\bar B(a,r)}\xi(w)\,\dd w<\infty.
\]
From Lemma~\ref{lem:measure}, there exist $k_1\in\N$ and a sequence $\{a^{(k')}\}_{k'=k_1}^\infty$ in $W\subset \R^2$ such that $a^{(k')}$ converges to $a$ as $k'\to\infty$ and $a^{(k')}$ belongs to the support of $\eta_{k'}$ for each $k'$.
For simplicity, we write just $k$ instead of $k'$ in what follows.

For $\eps>0$, let $g_\eps\colon[0,\infty)\to[0,1]$ be a smooth function such that
\[
g_\eps(t)=\begin{cases}
1&\text{on }[0,e^{-2/\eps}]\\
-3\eps\log t-4 & \text{on }[e^{-14/(9\eps)},e^{-13/(9\eps)}]\\
0 & \text{on }[e^{-1/\eps},\infty)
\end{cases}
\]
and $-3\eps/t\le g_\eps'(t)\le0$ for all $t>0$.
We define $\ph_\eps^{(k)}(x)=g_\eps\bigl(|\a_k\bfh^{(k)}(x)-a^{(k)}|\bigr)$ for $x\in K$.
To be precise,
\[
|\a_k\bfh^{(k)}(x)-a^{(k)}|=\left\{\sum_{i=1}^2(\a_k h_i^{(k)}(x)-a_i^{(k)})^2\right\}^{1/2},
\quad\text{where }a^{(k)}=(a_1^{(k)},a_2^{(k)}).
\]
Then, $\ph_\eps^{(k)}$ is quasi-continuous, $e_k\ph_\eps^{(k)}=1$ q.e.\ on $G_\eps^{(k)}:=V_{\lm_k}^{(n_k)}\cap \{x\in K\mid |\a_k\bfh^{(k)}(x)-a^{(k)}|<e^{-2/\eps}\}$, and $e_k\ph_\eps^{(k)}=0$ q.e.\ on $K\setminus U_{\lm_k}^{(n_k)}$. We note that $G_\eps^{(k)}$ is non-empty because $a^{(k)}$ belongs to the support of $\eta_k$ and thus $\nu_{\bfh^{(k)}}(G_\eps^{(k)})=\eta_k(B(a^{(k)},e^{-2/\eps}))>0$.
From Proposition~\ref{prop:qe}~(i), 
\begin{equation}\label{eq:CapG}
\Cp(G_\eps^{(k)};U_{\lm_k}^{(n_k)})\le \cE(e_k\ph_\eps^{(k)},e_k\ph_\eps^{(k)}).
\end{equation}
We also have
\begin{align}
\a_{k}^{-2}\cE(e_{k}\ph_\eps^{(k)},e_{k} \ph_\eps^{(k)})
&=\frac12 \a_{k}^{-2}\int_K \dd\nu_{e_{k} \ph_\eps^{(k)}}\nonumber\\
&= \frac12\a_{k}^{-2}\left\{\int_K e_{k}^2\,\dd\nu_{\ph_\eps^{(k)}}+2\int_K e_{k}\ph_\eps^{(k)}\,\dd\nu_{e_{k},\ph_\eps^{(k)}}+\int_K (\ph_\eps^{(k)})^2\,\dd\nu_{e_{k}}\right\}\nonumber\\
&\le \a_{k}^{-2}\int_K e_{k}^2\,\dd\nu_{\ph_\eps^{(k)}}+\a_{k}^{-2}\int_K (\ph_\eps^{(k)})^2\,\dd\nu_{e_{k}}.
\label{eq:2dim}
\end{align}
We give an upper bound of the right-hand side of \eqref{eq:2dim}.
Let $\kp(\dd x)=\xi(x)\cL^2(\dd x)$.
First,
\begin{align}
&\a_k^{-2}\int_K e_k^2\,\dd\nu_{\ph_\eps^{(k)}}\nonumber\\
&=\sum_{i,j=1}^2\int_K e_k^2 g_\eps'(|\a_k\bfh^{(k)}-a^{(k)}|)^2\frac{(\a_k h_i^{(k)}-a_i^{(k)})(\a_k h_j^{(k)}-a_j^{(k)})}{|\a_k\bfh^{(k)}-a^{(k)}|^2} \,\dd\nu_{h_i^{(k)},h_j^{(k)}}\nonumber\\
&\le\sum_{i,j=1}^2\int_{\R^2}g_\eps'(|z-a^{(k)}|)^2\,\dd\bigl((\a_k\bfh^{(k)})_*(e_k^2|\nu_{h_i^{(k)},h_j^{(k)}}|)\bigr)(\dd z)\nonumber\\
&\to 2\int_{\R^2}g_\eps'(|z-a|)^2\,\kp(\dd z)
\quad\text{(as $k\to\infty$)}\nonumber\\
&\le 2\int_{e^{-2/\eps}}^{e^{-1/\eps}}g_\eps'(r)^2\,(\psi_*\kp)(\dd r)
\quad(\psi\colon \R^2\ni z\mapsto |z-a|\in\R)\nonumber\\
&\le 2\int_{e^{-2/\eps}}^{e^{-1/\eps}}9\eps^2 r^{-2}\,\dd\Theta(r),
\label{eq:ak2}
\end{align}
where 
\[
  \Theta(r):=(\psi_*\kp)([0,r])=\int_{\bar B(a,r)}\xi(x)\,\dd x\le b\cL^2(\bar B(a,r))=b\pi r^2.
\]
Here, in the third line of \eqref{eq:ak2} from below, we used the fact that $g_\eps'(|\cdot-a^{(k)}|)^2$ converges to $g_\eps'(|\cdot-a|)^2$ uniformly as $k\to\infty$.

Then, the last term of \eqref{eq:ak2} is dominated by
\begin{align*}
18\eps^2\left\{\left[r^{-2}\Theta(r)\right]_{e^{-2/\eps}}^{e^{-1/\eps}}+\int_{e^{-2/\eps}}^{e^{-1/\eps}}2r^{-3}\Theta(r)\,\dd r\right\}
&\le 18\eps^2\left(b\pi+2b\pi\int_{e^{-2/\eps}}^{e^{-1/\eps}}r^{-1}\,\dd r\right)\\
&=18b\pi(\eps^2+2\eps),
\end{align*}
which converges to $0$ as $\eps\to0$.
Next,
\begin{align*}
\a_k^{-2}\int_K (\ph_\eps^{(k)})^2\,\dd\nu_{e_k}
&=\int_{\R^2}g_\eps(|z-a^{(k)}|)^2\,\bigl((\a_k\bfh^{(k)})_*(\a_k^{-2}\nu_{e_k})\bigr)(\dd z)\\
&\to \int_{\R^2}g_\eps(|z-a|)^2\,\rho(\dd z)\quad\text{(as $k\to\infty$)}\\
&\to \rho(\{a\})=0\quad\text{(as $\eps\to0$)}.
\end{align*}
Therefore, in view of \eqref{eq:2dim},
\[
\lim_{\eps\to0}\varlimsup_{k\to\infty}\a_k^{-2}\cE(e_k\ph_\eps^{(k)}, e_k\ph_\eps^{(k)})=0.
\]
This implies that there exists a sequence of positive numbers $\{\eps_{k}\}$ converging to $0$ such that
\[
\a_{k}^{-2}\cE(e_{k}\ph_{\eps_{k}}^{(k)},e_{k}\ph_{\eps_{k}}^{(k)})\to0
\quad\text{as $k\to\infty$}.
\]
On the other hand, for each $i=1,2$, from \eqref{eq:CapG} and Assumption~(A3)~(c),
\begin{align*}
\a_{k}^{-2}\cE(e_{k}\ph_{\eps_{k}}^{(k)},e_{k}\ph_{\eps_{k}}^{(k)})
&\ge \biggl(\frac12 \muosc_{U_{\lm_k}^{(n_k)}}h_i^{(k)}\biggr)^2\Cp(G_{\eps_k}^{(k)};U_{\lm_k}^{(n_k)})\\
&\ge \frac1{4C}\nu_{h_i^{(k)}}(U_{\lm_k}^{(n_k)}).
\end{align*}
Therefore, $\nu_{h_i^{(k)}}(U_{\lm_k}^{(n_k)})$ converges to $0$ as $k\to\infty$ for each $i=1,2$, which is contradictory to $\nu_{\bfh^{(k)}}(U_{\lm_k}^{(n_k)})=1$ for all $k$.
\end{proof}
\begin{remark}
We proved that $\sqrt\xi\in W^{1,2}(\R^d)$ in Proposition~\ref{prop:key}, but we did not use this property fully for proving Theorem~\ref{th:main}: only $\xi\in L^1(\R^2,\cL^2)$ is used for Theorem~\ref{th:main}. Such a regularity property will be useful when $d\ge3$, which will be left for future investigation.\end{remark}
\section{Examples}\label{sec:example}
In this section, we discuss some examples that meet the assumptions of Theorem~\ref{th:main}.
The first one is a class of self-similar sets, by which we confirm that our main theorem is consistent with earlier studies. The second one is a new one, a class of inhomogeneous Sierpinski gaskets. This example lacks global geometric homogeneity, and their Hausdorff dimensions may even be arbitrarily large. Nonetheless, we show that Assumption~\ref{assumption} still holds, and hence their AF-martingale dimension is equal to one.

In all examples, every element of $\cF$ has continuous modification, so we use $\osc_A f:=\sup_A f-\inf_A f$ instead of $\muosc_A f$ for $f\in\cF$.
We retain the notation in Section~\ref{sec:results}.

\subsection{P.\,c.\,f.\ self-similar sets}\label{sec:pcf}
We introduce a class of self-similar fractals, following \cite{Ki01}.
Let $K$ be a compact metrizable topological space and $N$ an integer greater than one.
Set $S=\{1,2,\ldots,N\}$ and $\Sg=S^\N$.
For $i\in S$, the shift operator $\sg_i\colon \Sg\to\Sg$ is defined as $\sg_i(\om_1\om_2\cdots)=i\om_1\om_2\cdots$.
Suppose that we are given a continuous injective map $\psi_i\colon K\to K$ for each $i\in S$.
We assume that there exists a continuous surjective map $\pi\colon \Sg\to K$ such that $\psi_i\circ \pi=\pi\circ\sg_i$ for every $i\in S$.
The triplet $\cL=(K,S,\{\psi_i\}_{i\in S})$ is called a self-similar structure.

We define $W_0=\{\emptyset\}$ and $W_m=S^m$ for $m\in \N$.
The set $W_*:=\bigcup_{m\in\Z_+}W_m$ is the totality of words consisting of elements of $S$ with finite length.
For $w=w_1w_2\cdots w_m\in W_m$, we define $\psi_w=\psi_{w_1}\circ\psi_{w_2}\circ\cdots\circ\psi_{w_m}$
and $K_w=\psi_w(K)$.
Here, by definition, $\psi_\emptyset$ is the identity map from $K$ to $K$.
For $w=w_1w_2\cdots w_m\in W_m$ and $w'=w'_1w'_2\cdots w'_{m'}\in W_{m'}$, $ww'$ denotes $w_1w_2\cdots w_mw'_1w'_2\cdots w'_{m'}\in W_{m+m'}$.
For $m\in\Z_+$, let $\cB_m$ be the $\sg$-field on $K$ generated by $\{K_w\mid w\in W_m\}$.
Then, $\{\cB_m\}_{m=0}^\infty$ is a filtration on $K$ and the $\sg$-field generated by $\{\cB_m\mid m\in\Z_+\}$ is equal to the Borel $\sg$-field $\cB(K)$ on $K$. 

Let 
\[
\cP=\bigcup_{m\in\N} \sg^m\left(\pi^{-1}\left(\bigcup_{i,j\in S,\,i\ne j}(K_i\cap K_j)\right)\right)\quad\text{and}\quad V_0=\pi(\cP),
\]
where $\sg^m\colon\Sigma\to\Sigma$ is the shift operator that is defined by $\sg^m(\om_1\om_2\cdots)=\om_{m+1}\om_{m+2}\cdots$.
The set $\cP$ is called the post-critical set.
We assume that $K$ is connected and the self-similar structure $(K,S,\{\psi_i\}_{i\in S})$ is {\em post-critically finite} (p.c.f.), that is, $\cP$ is a finite set.
\begin{figure}[t]\centering
\includegraphics[width=0.8\textwidth]{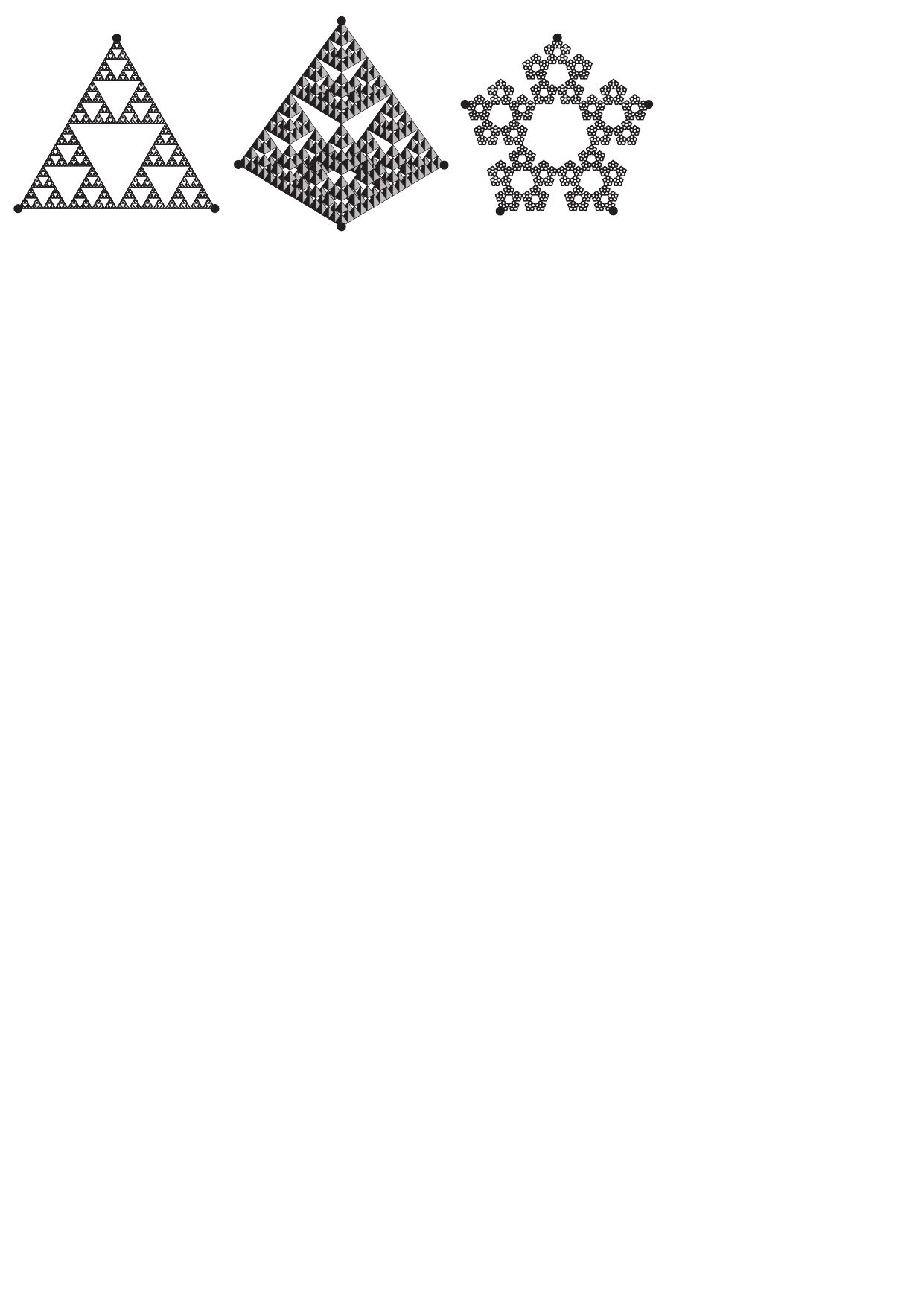}\medskip

\includegraphics[width=0.8\textwidth]{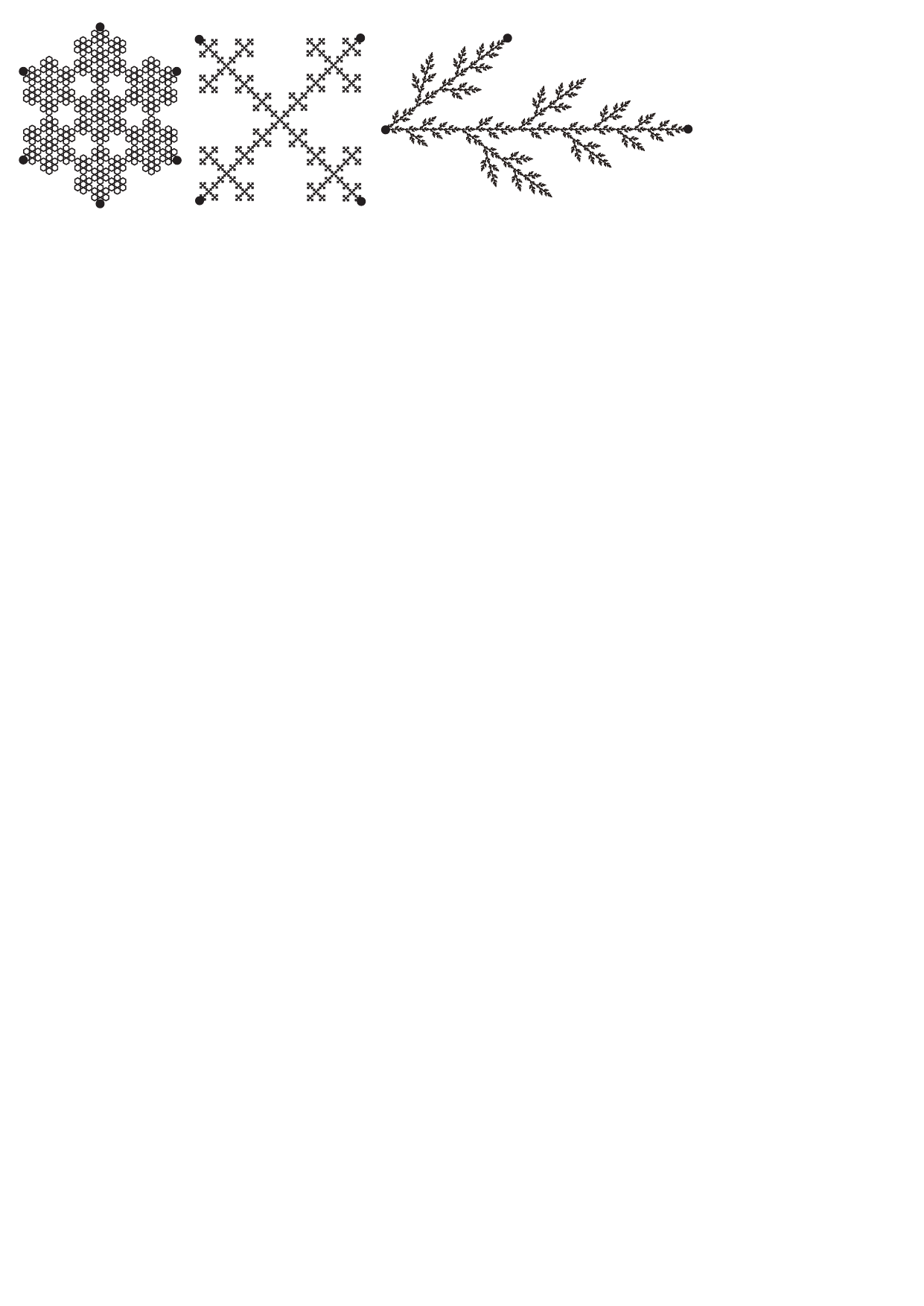}
\caption{(Adapted from \cite[Figure~1]{Hi10}.) Examples of p.c.f.\ self-similar sets. From the upper left, two- and three-dimensional standard Sierpinski gasket, Pentakun (pentagasket), snowflake, the Vicsek set, and Hata's tree-like set.}
\label{fig:pcf}
\end{figure}
Fig.~\ref{fig:pcf} shows several examples of p.c.f.\ self-similar sets $K$.
The set of black points denotes $V_0$ for each fractal.

Let $V_m=\bigcup_{w\in W_m}\psi_w(V_0)$ for $m\in\N$ and $V_*=\bigcup_{m\in\Z_+} V_m$.
For each $x\in K\setminus V_*$, there exists a unique element $\om=\om_1\om_2\cdots\in \Sg$ such that $\pi(\om)=x$.
For each $m\in\N$, $[x]_m$ denotes $\om_1\om_2\cdots\om_m\in W_m$ and $[x]_0$ denotes $\emptyset$.
For $x\in K$, the sequence $\{\bigcup_{w\in W_m;\,x\in K_w}K_w\}_{m=0}^\infty$ is a fundamental system of neighborhoods of $x$ (\cite[Proposition~1.3.6]{Ki01}). Note that $\bigcup_{w\in W_m;\,x\in K_w}K_w=K_{[x]_m}$ if $x\in K\setminus V_*$.

For a finite set $A$, let $l(A)$ denote the space of all real-valued functions on $A$, equipped with the inner product $(\cdot,\cdot)_{l(A)}$ defined as $(u,v)_{l(A)}=\sum_{q\in A}u(q)v(q)$.
Let $D=(D_{qq'})_{q,q'\in V_0}$ be a symmetric linear operator on $l(V_0)$ (also regarded as a square matrix of size $\#V_0$) such that the following conditions hold:
\begin{itemize}
\item $D$ is non-positive definite,
\item $Du=0$ if and only if $u$ is constant on $V_0$,
\item $D_{qq'}\ge0$ for all $q\ne q'\in V_0$.
\end{itemize}
We define $\cE^{(0)}(u,v)=(-Du,v)_{l(V_0)}$ for $u,v\in l(V_0)$.
This is a Dirichlet form on $l(V_0)$, where $l(V_0)$ is identified with the $L^2$ space on $V_0$ with the counting measure (\cite[Proposition~2.1.3]{Ki01}).
For $\bfr=\{r_i\}_{i\in S}\in(0,\infty)^S$, we define a bilinear form $\cE^{(m)}$ on $l(V_m)$ for $m\in\N$ as
\[
  \cE^{(m)}(u,v)=\sum_{w\in W_m}\frac{1}{r_w}\cE^{(0)}(u\circ\psi_w|_{V_0},v\circ\psi_w|_{V_0}),\quad
  u,v\in l(V_m).
\]
Here, $r_w:=r_{w_1}r_{w_2}\cdots r_{w_m}$ for $w=w_1w_2\cdots w_m\in W_m$ and $r_\emptyset=1$.
We refer to $(D,\bfr)$ as a \emph{regular harmonic structure} if $0<r_i<1$ for every $i\in S$ and 
\[
\cE^{(0)}(v,v)=\inf\{\cE^{(1)}(u,u)\mid u\in l(V_1)\mbox{ and }u|_{V_0}=v\}
\]
for every $v\in l(V_0)$.
Then, for $m\in\Z_+$ and $v\in l(V_m)$, we obtain 
\[
\cE^{(m)}(v,v)=\inf\{\cE^{(m+1)}(u,u)\mid u\in l(V_{m+1})\text{ and }u|_{V_m}=v\}.
\]
The existence of regular harmonic structures is a nontrivial problem.
See, e.g., \cite{Li90,Kus93,HMT06,Pe07,Ki01} for related studies. 
In particular, a class of nested fractals, which are self-similar sets that are realized in Euclidean spaces and have some good symmetries, have canonical regular harmonic structures.
For the precise definition of nested fractals, see \cite{Li90,Ki01}. All the fractals shown in Fig.~\ref{fig:pcf} except Hata's tree-like set are nested fractals and have regular harmonic structures.
Note that Hata's tree-like set also has regular harmonic structures (\cite[Example~3.1.6]{Ki01}).

Here we assume that a regular harmonic structure $(D,\bfr)$ is given.
Take a Borel probability measure $\mu$ on $K$ with full support and $\mu(V_*)=0$.
We can then define a strongly local regular Dirichlet form $(\cE,\cF)$ on $L^2(K,\mu)$ associated with $(D,\bfr)$ as
\begin{align*}
\cF&=\left\{f\in C(K)\subset L^2(K,\mu)\left|\,
\lim_{m\to\infty}\cE^{(m)}(f|_{V_m},f|_{V_m})<\infty\right.\right\},\\
\cE(f,g)&= \lim_{m\to\infty}\cE^{(m)}(f|_{V_m},g|_{V_m}),\quad f,g\in\cF.
\end{align*}
For further details, see the beginning of \cite[Section~3.4]{Ki01}.

The Dirichlet form $(\cE,\cF)$ constructed above satisfies the self-similarity: $\psi_i^* f\in\cF$ for each $i\in S$ and $f\in\cF$ and
\[
  \cE(f,g)=\sum_{i\in S}\frac1{r_i}\cE(\psi_i^* f,\psi_i^* g),\quad f,g\in\cF.
\]
This implies that 
\begin{equation}\label{eq:selfsimilar}
\cE(f,g)=\sum_{w\in W_m}\frac1{r_w}\cE(\psi_w^* f,\psi_w^* g),\quad f,g\in\cF,\ m\in\N
\end{equation}
and
\begin{equation}\label{eq:selfsimilarity2}
\nu_{f,g}=\sum_{w\in W_m}\frac1{r_w}\nu_{\psi_w^* f,\psi_w^* g},\quad f,g\in\cF,\ m\in\N.
\end{equation}
For every $f\in\cF$, $\nu_f$ does not charge any one-points. This holds for arbitrary strongly local regular Dirichlet forms; see, e.g., \cite[Theorem~4.3.8]{CF12}.
Fix any minimal energy-dominant measure $\nu$ with $\nu(K)<\infty$. $\nu$ also does not charge any one-point sets. In particular, $\nu(V_*)=0$.

We will confirm that (A1)--(A3) in Assumptions~\ref{assumption} hold.
(A1) is obvious. Let $U$ denote $K\setminus V_0$. For $n\in\N$, we define $\Lm_n=W_n$ and $U_w^{(n)}=\psi_w(U)$ for $w\in\Lm_n$.
Then, $\{U_w^{(n)}\}_{w\in\Lm_n}$ are disjoint. Since $K\setminus\bigsqcup_{w\in\Lm_n}U_w^{(n)}\subset V_*$ and $(\mu+\nu)(V_*)=0$, $\{U_w^{(n)}\}_{w\in\Lm_n}$ is a partition of $K$.
(A2)~(a) holds from the construction of $\{U_w^{(n)}\}_{w\in\Lm_n}$.
(A2)~(b) holds from the following:
\begin{itemize}
\item The totality of $A\in\cB(K)$ that satisfies \eqref{eq:triangle} is a $\sg$-field.
\item Any open subset of $K$ is described by a countable union of elements of $\{K_w\}_{w\in W_*}$.
\item For any $n\in\N$ and $w\in W_n$, $(\mu+\nu)(K_w\setminus U_w^{(n)})=0$.
\end{itemize}
Since $K$ is compact, (A2)~(c) is obvious.

Recall that $\cH_U$ is the space of functions in $\cF$ that are harmonic on $U$.
From the self-similarity of $(\cE,\cF)$, $\psi_w^* h\in\cH_U$ holds for any $h\in\cH_U$ and $w\in W_*$.
Let $\cD=\{h\in\cH_U\mid \cE(h,h)=1/2\text{ and }\int_K h\,\dd\mu=0\}$. Since $\cH_U$ is a finite dimensional space (indeed, the dimension of $\cH_U$ is $\#V_0$), $\cD$ is a compact subset of $\cF$.
For $n\in\N$, let $Y_n$ denote the closure of $K\setminus \bigcup_{w\in\hat W_n}K_w$, where $\hat W_n=\{w\in W_n\mid K_w\cap V_0\ne\emptyset\}$.

Let $h\in\cD$. Since $\bigcup_{n\in\N}Y_n=U$ and $\nu_h(U)=1$, there exists $n_0(h)\in\N$ such that $\nu_h(Y_n)>1/2$ for all $n\ge n_0(h)$. From the continuity of $\nu_h$ in $h$ (Proposition~\ref{prop:sl} (i)), there exists $\dl>0$ such that $\nu_{\hat h}(Y_n)>1/2$ for all $\hat h\in\cF$ with $\cE(h-\hat h,h-\hat h)<\dl$.
From the compactness of $\cD$, there exists $n_1\in\N$ such that $\nu_h(Y_{n_1})>1/2$ for all $h\in\cD$. This implies that 
\begin{equation}\label{eq:UV}
 \nu_h(Y_{n_1})\ge\frac12\nu_h(K)\quad\text{for any $h\in\cH_U$}.
\end{equation}
We write $V$ for $Y_{n_1}$.

To confirm (A3), we provide some estimates.
\begin{proposition}\label{prop:equivalence}
There exist positive constants $c_1$ and $c_2$ such that
\begin{equation}\label{eq:equivalence1}
2\cE(f,f)=\nu_f(U)\ge c_1\Bigl(\osc_{U}f\Bigr)^2=c_1\Bigl(\osc_{K}f\Bigr)^2\quad\text{for any $f\in\cF$}
\end{equation}
and
\begin{equation}\label{eq:equivalence2}
\nu_h(U)\le c_2\Bigl(\osc_{U}h\Bigr)^2\quad\text{for any $h\in\cH_U$}.
\end{equation}
\end{proposition}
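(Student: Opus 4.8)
The plan is to reduce the two inequalities to standard facts about the resistance-form structure on p.c.f.\ self-similar sets and to finite-dimensional linear algebra on $l(V_0)$. First, the asserted equalities are immediate. Since energy measures charge no one-point set and $V_0$ is finite, $\nu_f(V_0)=0$, so $\nu_f(U)=\nu_f(K)=2\cE(f,f)$ by Proposition~\ref{prop:sl}~(iii). Moreover, since $K$ is connected with more than one point it has no isolated points, so $U=K\setminus V_0$ is dense in $K$; as $f$ is continuous, $\sup_U f=\sup_K f$ and $\inf_U f=\inf_K f$, whence $\osc_U f=\osc_K f$. It therefore remains to establish the two genuine inequalities.

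For \eqref{eq:equivalence1} I would invoke the resistance-form theory of~\cite{Ki01}. Because $(D,\bfr)$ is a regular harmonic structure, $(\cE,\cF)$ is a resistance form whose resistance metric $R$ is compatible with the original topology of $K$; in particular $K$ is $R$-compact and $\diam_R K<\infty$. The defining property of $R$ gives the pointwise bound $|f(x)-f(y)|^2\le R(x,y)\,\cE(f,f)$ for all $f\in\cF$ and $x,y\in K$. Taking the supremum over $x,y$ and using continuity of $f$ yields $(\osc_K f)^2\le(\diam_R K)\,\cE(f,f)$, so \eqref{eq:equivalence1} holds with $c_1=2/\diam_R K$.

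For \eqref{eq:equivalence2} I would exploit that $\cH_U$ is finite-dimensional. The restriction map $h\mapsto h|_{V_0}$ identifies $\cH_U$ with $l(V_0)$, since a function harmonic on $U$ is precisely the energy-minimizing extension of its boundary data on $K\setminus U=V_0$, and under this identification $\cE(h,h)=\cE^{(0)}(h|_{V_0},h|_{V_0})$ because harmonic functions have the same energy at every level. By the maximum principle for harmonic functions (a harmonic $h$ attains its extreme values on $V_0$), $\osc_K h=\osc_{V_0}(h|_{V_0})$. Now both $\rho\mapsto\cE^{(0)}(\rho,\rho)$ and $\rho\mapsto(\osc_{V_0}\rho)^2$ are nonnegative quadratic forms on the finite-dimensional space $l(V_0)$ that vanish exactly on the constants; hence they descend to squared norms on the quotient of $l(V_0)$ modulo constant functions, which are equivalent by finite-dimensionality. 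This yields $c_2$ with $2\cE^{(0)}(\rho,\rho)\le c_2(\osc_{V_0}\rho)^2$ for all $\rho$, which is \eqref{eq:equivalence2}.

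The genuinely substantive step is \eqref{eq:equivalence1}, since it must hold for every $f\in\cF$, not only for harmonic functions, so a naive compactness argument over the infinite-dimensional space $\cF$ does not suffice; the essential input is the existence of a resistance metric of finite diameter, which encodes that $K$ is a single energy-connected object. In contrast, \eqref{eq:equivalence2} is elementary once the finite-dimensionality of $\cH_U$ and the maximum principle are available, both of which belong to the standard p.c.f.\ theory of~\cite{Ki01}.
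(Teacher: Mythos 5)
Your proof is correct and follows essentially the same route as the paper: \eqref{eq:equivalence1} is exactly the resistance-metric estimate the paper delegates to \cite[Theorem~3.3.4]{Ki01}, and \eqref{eq:equivalence2} is the same finite-dimensional norm-equivalence argument, which the paper runs on the zero-mean subspace $\hat\cH_U$ while you run it on $l(V_0)$ modulo constants via the restriction map and the maximum principle. The two realizations of the quotient are isomorphic, so the difference is only cosmetic.
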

\begin{proof}
See \cite[Theorem~3.3.4]{Ki01} for the proof of \eqref{eq:equivalence1}.
Let $\hat\cH_U=\{h\in\cH_U\mid\int_K h\,\dd\mu=0\}$. Since $\hat\cH_U$ is finite dimensional and both maps $h\mapsto\sqrt{\nu_h(U)}=\sqrt{2\cE(h,h)}$ and $h\mapsto \osc_{U}h$ provide norms on $\hat\cH_U$, there exists $c_3>0$ such that
\[
\sqrt{\nu_h(U)}\le c_3\osc_{U}h\quad\text{for all $h\in\hat\cH_U$}.
\]
This implies \eqref{eq:equivalence2}.
\end{proof}
It is easy to show from \eqref{eq:equivalence1} that every non-empty subset of $K$ has positive capacity.

For $n\in\N$ and $w\in\Lm_n$, we define $V_w^{(n)}:=\psi_w(V)\subset U_w^{(n)}$.
Then, for $h\in\cH_{U_w^{(n)}}$,
\[
  \nu_h(U_w^{(n)})=\frac1{r_w}\nu_{\psi_w^* h}(U)
  \le \frac2{r_w}\nu_{\psi_w^* h}(V)
  =2\nu_h(V_w^{(n)})
\]
from \eqref{eq:selfsimilarity2} and \eqref{eq:UV}. Thus, (A3) (a) holds.

We fix a function $g\in\cF$ such that $0\le g\le 1$, $g=1$ on $V$, and $g=0$ on $K\setminus U=V_0$.
Let $f_w\in\cF$ be defined as
\[
f_w(x)=\begin{cases}
g(\psi_w^{-1}(x)) & (x\in K_w^{(n)})\\
0 & (x\notin K_w^{(n)}).
\end{cases}
\]
Since $f_w=1$ on $V_w^{(n)}$ and $f_w=0$ on $K\setminus U_w^{(n)}$, we have
\begin{align*}
\Cp(V_w^{(n)};U_w^{(n)})&\le \cE(f_w,f_w)\quad\text{(from Proposition~\ref{prop:qe}~(1))}\\
&=\frac1{r_w}\cE(g,g)\quad\text{(from \eqref{eq:selfsimilar})}.
\end{align*}
Moreover, for any $h\in\cH_{U_w^{(n)}}$,
\begin{align*}
\nu_h(U_w^{(n)})
&=\frac1{r_w}\nu_{\psi_w^*h}(U)\quad\text{(from \eqref{eq:selfsimilarity2})}\\
&\ge \frac{c_1}{r_w}\Bigl(\osc_{U}\psi_w^*h\Bigr)^2\quad\text{(from \eqref{eq:equivalence1})}\\
&=\frac{c_1}{r_w}\Bigl(\osc_{U_w^{(n)}}h\Bigr)^2.
\end{align*}
Therefore,
\[
\Cp(V_w^{(n)};U_w^{(n)}) \Bigl(\osc_{U_w^{(n)}}h\Bigr)^2\le \frac{\cE(g,g)}{c_1}\nu_h(U_w^{(n)})
\]
and (A3) (b) holds.

Let $x\in V_w^{(n)}$ and $e_x$ denote $e_{\{x\};U_w^{(n)}}$. 
Then,
\begin{align*}
\Cp(\{x\};U_w^{(n)})
&=\cE(e_x,e_x)\\
&=\frac1{r_w}\cE(\psi_w^*e_x,\psi_w^*e_x)\quad\text{(from \eqref{eq:selfsimilar})}\\
&\ge \frac{c_1}{2 r_w}\Bigl(\osc_U \psi_w^*e_x\Bigr)^2\quad\text{(from \eqref{eq:equivalence1})}\\
&=\frac{c_1}{2 r_w}.
\end{align*}
On the other hand, for $h\in\cH_{U_w^{(n)}}$,
\begin{align*}
\nu_h(U_w^{(n)})&=\frac1{r_w}\nu_{\psi_w^*h}(U)\quad\text{(from \eqref{eq:selfsimilarity2})}\\
&\le \frac{c_2}{r_w}\Bigl(\osc_{U}\psi_w^*h\Bigr)^2\quad\text{(from \eqref{eq:equivalence2})}\\
&= \frac{c_2}{r_w}\Bigl(\osc_{U_w^{(n)}}h\Bigr)^2.
\end{align*}
Therefore,
\[
\Cp(\{x\};U_w^{(n)}) \Bigl(\osc_{U_w^{(n)}}h\Bigr)^2\ge \frac{c_1}{2c_2}\nu_h(U_w^{(n)})
\]
and (A3) (c) holds.

In conclusion, we can apply Theorem~\ref{th:main} and conclude that the AF-martingale dimension is one. 
This shows that our arguments recover \cite[Theorem~4.10]{Hi13} within the framework of the present paper.

\subsection{Inhomogeneous Sierpinski gaskets}
As typical examples that were not covered in earlier studies, we introduce inhomogeneous Sierpinski gaskets in this subsection.

Fix an integer $d$ greater than $1$.
We take a closed regular $d$-simplex $\tilde K$ in $\R^d$.
The vertices of $\tilde K$ are denoted by $p_1,p_2,\dots,p_{d+1}$.
Let $l$ be an integer greater than $1$.
Let $K_i^{(l)} \subset \tilde K$, $i = 1, 2, \dots$, be the closed regular $d$-simplices obtained as follows:
we divide each edge of $\tilde K$ into $l$ equal parts, join the division points by line segments, and form hyperplanes parallel to the faces of $\tilde K$.
Among the resulting closed simplices whose sizes are $1/l$ times that of $\tilde K$, we retain those whose orientation is consistent with that of $\tilde K$, and remove all the others.
The number of simplices is denoted by $N(l)$. We can confirm that
\[
N(l)=\sum_{j_1=1}^l \sum_{j_2=1}^{j_1}\sum_{j_3=1}^{j_2}\cdots\sum_{j_{d}=1}^{j_{d-1}}1
=\frac{1}{d!}l(l+1)\cdots(l+d-1).
\]
In particular, $N(l)=l(l+1)/2$ when $d=2$ and $N(l)=l(l+1)(l+2)/6$ when $d=3$.
Concerning the indexing, $K_i^{(l)}$ $(i=1,2,\dots,d+1)$ is chosen so that $p_i\in K_i^{(l)}$, while the remaining indices $(i=d+2,d+3,\dots,N(l))$ are assigned arbitrarily. See Fig.~\ref{fig:triangles}.
\begin{figure}[t]
\centering
\includegraphics[width=1\textwidth]{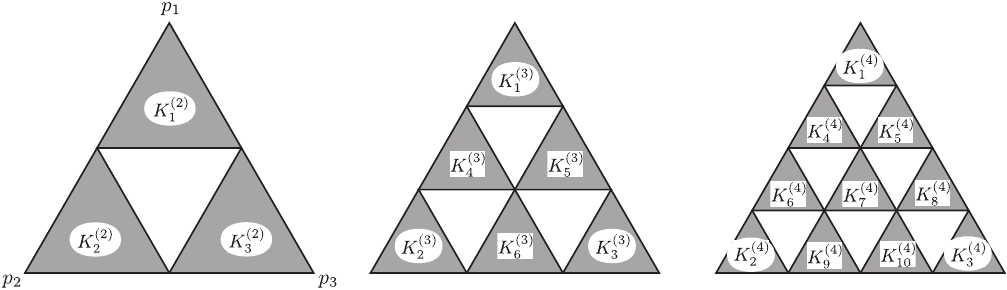}
\caption{(Quoted from \cite[Fig.~1]{HY22}) Illustration of $K_i^{(l)}$ ($i=1,2,\dots,N(l)$) when $d=2$ and $l=2,3,4$, respectively. Pay attention to the choice of $K_1^{(l)}$, $K_2^{(l)}$, and $K_3^{(l)}$.}
\label{fig:triangles}
\end{figure}
Let $\psi_i^{(l)}$, $i=1,2,\dots,N(l)$, be the contractive affine map from $\tilde K$ onto $K_i^{(l)}$ of type $\psi_i^{(l)}(z)=l^{-1}z+\a_i^{(l)}$ for some $\a_i^{(l)}\in \R^d$, which is uniquely determined. 
Note that $p_i$ is the fixed point of $\psi_i^{(l)}$ for $i=1,2,\dots,d+1$.

For a set $A$, $l(A)$ denotes the space of all real-valued functions on $A$.
Let 
\[
V_0=\{p_1,p_2,\dots,p_{d+1}\}\quad\text{and}\quad V^{(l)}=\bigcup_{i=1}^{N(l)}\psi_i^{(l)}(V_0).
\]
We further define
\begin{align*}
E_0&=\{\{p,q\}\mid p,q\in V_0\text{ and } p\ne q\},\\
E^{(l)}&=\left\{\{\psi_i^{(l)}(p),\psi_i^{(l)}(q)\}\mathrel{}\middle|\mathrel{} i\in\{1,2,\dots,N(l)\}\text{ and }\{p,q\}\in E_0\right\}.
\end{align*}
We introduce the following quadratic forms:
\begin{align*}
\cQ(f,g)&=\sum_{\{p,q\}\in E_0}(f(p)-f(q))(g(p)-g(q)),\quad f,g\in l(V_0),\\
\cQ^{(l)}(f,g)&=\sum_{\{p,q\}\in E^{(l)}}(f(p)-f(q))(g(p)-g(q)),\quad f,g\in l(V^{(l)}).
\end{align*}
Then, there exists a unique $r^{(l)}\in(0,1)$ such that for all $f\in l(V_0)$,
\[
\cQ(f,f)=\frac1{r^{(l)}}\inf\{\cQ^{(l)}(g,g)\mid g\in l(V^{(l)})\text{ and }g|_{V_0}=f\}.
\]
Indeed, the infimum on the right-hand side defines a quadratic form on $l(V_0)$ (the trace of $\cQ^{(l)}$ on $l(V_0)$), which should be equal to $\cQ$ up to a positive multiple constant by symmetry. For the proof of $r^{(l)}<1$, see e.g., \cite[Corollary~6.28]{Ba98} or \cite[Proposition~3.1.8]{Ki01}.
For example, $r^{(2)}=3/5$, $r^{(3)}=7/15$, and $r^{(4)}=41/103$ when $d=2$.
The asymptotics of $r^{(l)}$ as $l\to\infty$ is found in \cite[Theorem~2.2]{HK02}.

We now fix a non-empty finite subset $T$ of $\{l\in\N\mid l\ge 2\}$. For each $l\in T$, let $S^{(l)}$ denote the set of letters $i^l$ for $i=1,2,\dots,N(l)$. We should remark that $i^l$ is a letter and does not represent $\underbrace{i\cdots i}_l$.
We set $S=\bigcup_{l\in T} S^{(l)}$ and $\Sg= S^\N$. 

For each $v\in S$ the shift operator $\sg_v\colon \Sg\to\Sg$ is defined as $\sg_v(\om_1\om_2\cdots)=v\om_1\om_2\cdots$.
Let $W_0=\{\emptyset\}$ and $W_m=S^m$ for $m\in\N$, and define $W_*=\bigcup_{m\in\Z_+} W_m$. 
As in Section~\ref{sec:pcf}, $ww'\in W_{m+n}$ denotes $w_1w_2\cdots w_m w'_1w'_2\cdots w'_n$ for $w=w_1w_2\cdots w_m\in W_m$ and $w'=w'_1w'_2\cdots w'_n\in W_n$. 
For $\om=\om_1\om_2\cdots\in\Sg$ and $n\in\N$, let $[\om]_n$ denote $\om_1\om_2\cdots\om_n\in W_n$.
By convention, $[\om]_0:=\emptyset\in W_0$ for $\om\in\Sg$.

For $i^l\in S$ we define $\psi_{i^l}:=\psi_i^{(l)}$.
For $w=w_1w_2\cdots w_m\in W_m$, $\psi_w$ denotes $\psi_{w_1}\circ\psi_{w_2}\circ\dots\circ\psi_{w_m}$. Here $\psi_\emptyset$ is the identity map by definition.
For $w\in W_*$, $\tilde K_w$ denotes $\psi_w(\tilde K)$.
For $\om\in \Sg$, $\bigcap_{m\in\Z_+}\tilde K_{[\om]_m}$ is a one-point set, say $\{p\}$. The map $\Sg\ni \om\mapsto p\in \tilde K$ is denoted by $\pi$. The relation $\psi_{v}\circ\pi =\pi\circ \sg_v$ holds for $v\in S$.

We fix $L=\{L_w\}_{w\in W_*}\in T^{W_*}$. In other words, we assign $L_w\in T$ to each $w\in W_*$.
We set $\tilde W_0=\{\emptyset\}$ and
\[
\tilde W_m=\bigcup_{w\in\tilde W_{m-1}}\bigl\{w v\bigm|v\in S^{(L_w)}\bigr\},\quad m\in\N,
\]
inductively.
Define $\tilde W_*=\bigcup_{m\in\Z_+}\tilde W_m\subset W_*$, $\tilde\Sg=\{\om\in\Sg\mid [\om]_m\in\tilde W_m\text{ for all }m\in\Z_+\}$ and $G(L)=\pi(\tilde\Sg)$. Then it holds that
\[
G(L)=\bigcap_{m\in\Z_+}\bigcup_{w\in\tilde W_m}\tilde K_w.
\]
We call $G(L)$ the \emph{inhomogeneous Sierpinski gasket} generated by $L$, see Fig.~\ref{fig:inhsg}.
\begin{figure}[!htbp]
\centering
\includegraphics[height=0.88\textheight]{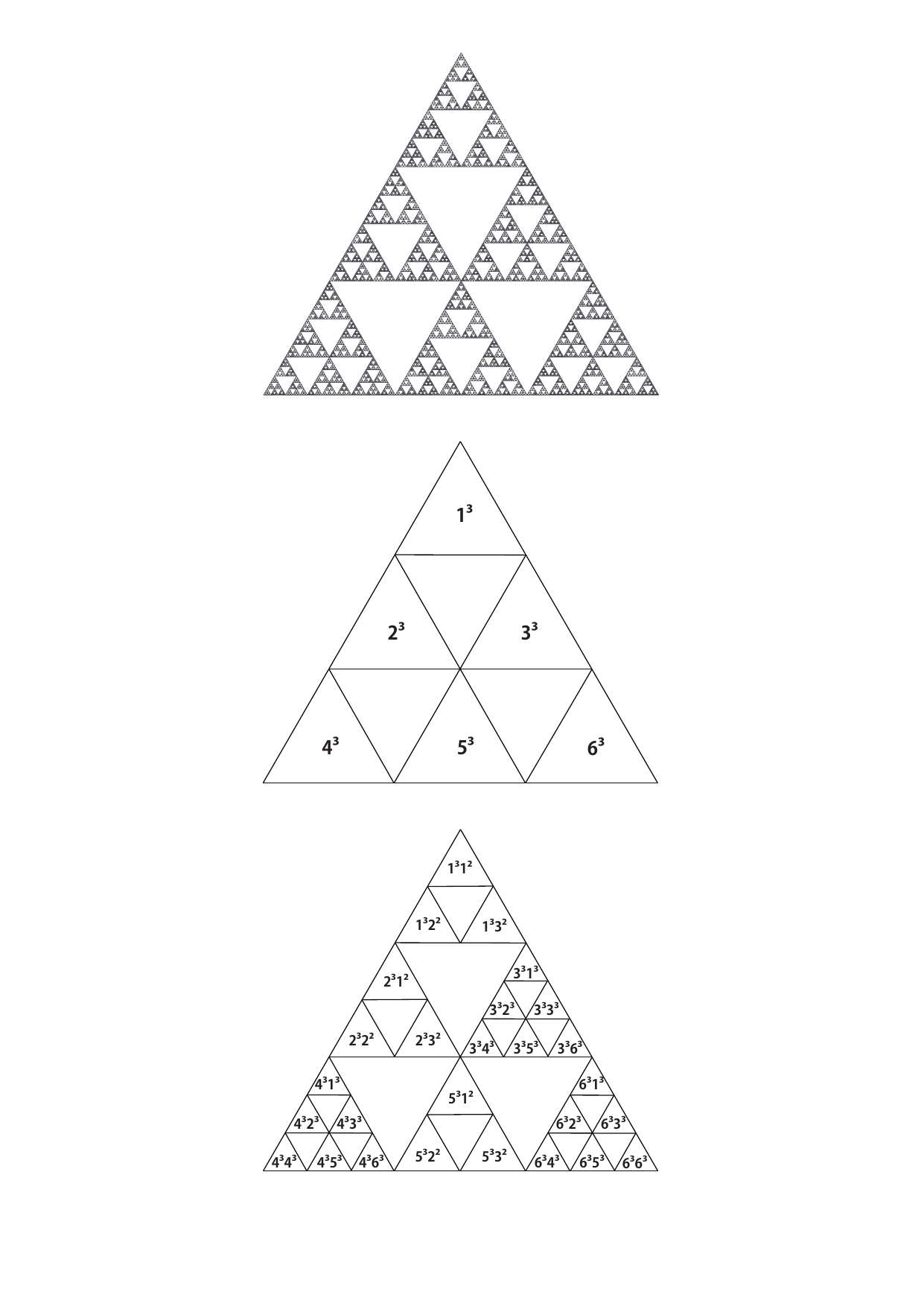}
\caption{(Quoted from \cite[Fig.~4]{Hi25}.) An example of inhomogeneous Sierpinski gaskets with $d=2$ and $T=\{2,3\}$ (the upper figure). Here, $L=\{L_w\}_{w\in W_*}$ is given by $L_\emptyset=3$, $L_{1^3}=L_{2^3}=L_{5^3}=2$, $L_{3^3}=L_{4^3}=L_{6^3}=3$, $L_{1^3 1^2}=L_{1^3 2^2}=L_{1^3 3^2}=2$, $L_{2^3 1^2}=2$, $L_{2^3 2^2}=L_{2^3 3^2}=3$, $L_{3^3 1^3}=L_{3^3 3^3}=L_{3^3 4^3}=L_{3^3 5^3}=L_{3^3 6^3}=2$, $L_{3^3 2^3}=3$, etc. The indices are indicated in the middle and lower figures.}
\label{fig:inhsg}
\end{figure}
We should note that only $\{L_w\}_{w\in\tilde W_*}$ among $\{L_w\}_{w\in W_*}$ is essential to define $G(L)$. When $L_w=l$ for all $w\in W_*$ for some $l\ge2$, the corresponding $G(L)$ is called the level $l$ Sierpinski gasket.

For $w\in\tilde W_*$, $K_w$ denotes $\tilde K_w\cap G(L)$.
Let 
\[
V_m=\bigcup_{w\in\tilde W_m}\psi_w(V_0)\quad\text{and}\quad
E_m=\left\{\{\psi_w(p),\psi_w(q)\}\mathrel{}\middle|\mathrel{} w\in \tilde W_m\text{ and }\{p,q\}\in E_0\right\}
\]
for $m\in\N$, and $V_*=\bigcup_{m\in\Z_+}V_m$. The closure of $V_*$ in $\R^d$ coincides with $G(L)$.

In the following, we write $K$ for $G(L)$.
Fix a finite Borel measure $\mu$ on $K$ with full support.
We can construct a canonical, strongly local regular Dirichlet form $(\cE,\cF)$ on $L^2(K,\mu)$~\cite{Ha97} in a similar manner to the previous subsection. For readers' convenience and later discussion, we explain some more details here, by following and modifying the arguments in \cite{Kus93}.

We set $r_{i^l}=r^{(l)}$ for $i^l\in S$ and $r_w=r_{w_1}r_{w_2}\cdots r_{w_m}$ for $w=w_1w_2\cdots w_m\in W_m$.
By convention, $r_\emptyset=1$.
For $m\in\Z_+$, we define a quadratic form $\cE^{(m)}$ on $l(V_m)$ as
\[
\cE^{(m)}(f,g)=\sum_{w\in\tilde W_m}\frac1{r_w}\cQ(f\circ\psi_w|_{V_0}, g\circ\psi_w|_{V_0}).
\]
Then, for $m\in\Z_+$ and $f\in l(V_m)$ it holds that
\[
\cE^{(m)}(f,f)=\inf\{\cE^{(m+1)}(g,g)\mid g\in l(V_{m+1})\text{ and }g|_{V_m}=f\}.
\]
In particular, $\{\cE^{(m)}(f|_{V_m},f|_{V_m})\}_{m\in\Z_+}$ is nondecreasing for every $f\in l(V_*)$.
Define
\[
\cF_*=\left\{f\in l(V_*)\mathrel{}\middle|\mathrel{}\lim_{m\to\infty}\cE^{(m)}(f|_{V_m},f|_{V_m})<\infty\right\}
\]
and for $f,g\in\cF_*$,
\begin{align*}
&\cE_*(f,g)\\*
&:=\lim_{m\to\infty}\cE^{(m)}(f|_{V_m},g|_{V_m})\\
&=\lim_{m\to\infty}\frac12\{\cE^{(m)}((f+g)|_{V_m},(f+g)|_{V_m})-\cE^{(m)}(f|_{V_m},f|_{V_m})-\cE^{(m)}(g|_{V_m},g|_{V_m})\}.
\end{align*}
Note that $(\cE_*,\cF_*)$ is a resistance form on $V_*$ from \cite[Definition~2.2.1, Theorem~2.2.6, Definition~2.3.1]{Ki01}.

The following proposition (the piecewise harmonic extension and maximum principle) is standard and the proof is omitted.
\begin{proposition}\label{prop:1}
Let $m\le n$. For each $f\in l(V_m)$ there exists a unique $h\in l(V_n)$ such that $h|_{V_m}=f$ and
\[
\cE^{(n)}(h,h)=\inf\{\cE^{(n)}(g,g)\mid g\in l(V_n)\text{ and }g|_{V_m}=f\}.
\]
Moreover, for any $w\in\tilde W_m$ and $x\in V_n\cap \tilde K_w$,
\[
\min_{\psi_w(V_0)}f\le h(x)\le \max_{\psi_w(V_0)}f.
\]
\end{proposition}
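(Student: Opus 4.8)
The plan is to treat this as a finite-dimensional strictly convex minimization problem, then exploit the additivity of $\cE^{(n)}$ over cells to reduce a discrete maximum principle to the single-cell case.

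For existence and uniqueness, I would first note that $g\mapsto\cE^{(n)}(g,g)$ is a non-negative quadratic form on the finite-dimensional space $l(V_n)$, so it suffices to check that its restriction to $W:=\{g\in l(V_n)\mid g|_{V_m}=0\}$ is positive definite. If $g\in W$ satisfies $\cE^{(n)}(g,g)=0$, then $\cQ(g\circ\psi_{w'}|_{V_0},g\circ\psi_{w'}|_{V_0})=0$ for every $w'\in\tilde W_n$, so $g$ is constant on each $\psi_{w'}(V_0)$; since the graph $(V_n,E_n)$ is connected (by the gasket construction) and $g$ vanishes on the non-empty set $V_m$, this forces $g\equiv0$. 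Minimizing the strictly convex functional over the affine set $f_0+W$, where $f_0$ is any extension of $f$, then yields a unique minimizer $h$, characterized by the Euler--Lagrange (discrete harmonicity) condition $\cE^{(n)}(h,\ph)=0$ for all $\ph\in W$.

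The key structural step is to localize to each level-$m$ cell. Every $w'\in\tilde W_n$ factors uniquely as $w'=wv$ with $w\in\tilde W_m$, and since $r_{wv}=r_w r_v$, the energy splits as
\[
\cE^{(n)}(g,g)=\sum_{w\in\tilde W_m}\frac1{r_w}\,\cE^{[w]}\bigl(g\circ\psi_w,\,g\circ\psi_w\bigr),
\]
where $\cE^{[w]}$ is the energy form of the sub-gasket rooted at $w$ (governed by the labels $\{L_{wv}\}_v$) and depends only on the values of $g$ on $V_n\cap\tilde K_w$. Because distinct level-$m$ cells meet only at common corners (finite ramification), the only $V_m$-points of $\tilde K_w$ are its corners $\psi_w(V_0)$, so the constraint $g|_{V_m}=f$ fixes precisely the data $f|_{\psi_w(V_0)}$ within each cell. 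Hence the minimization decouples, and by the uniqueness just established, $h|_{V_n\cap\tilde K_w}$ is itself the energy-minimizing extension of $f|_{\psi_w(V_0)}$ inside $\tilde K_w$. Applying $\psi_w^{-1}$, this reduces the asserted bound to the case $m=0$: an energy minimizer with prescribed values on $V_0$.

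For this reduced case I would use the harmonicity condition against the test functions $\bfone_{\{x\}}$ for $x\in V_n\setminus V_0$ (which lie in $W$). Since the off-diagonal entries of $\cQ$ equal $1>0$ and all weights $1/r_{w'}$ are positive, $\cE^{(n)}(h,\bfone_{\{x\}})=0$ rewrites as a mean-value identity $h(x)=\sum_{y\sim x}c_{xy}h(y)$ with conductances $c_{xy}>0$ and $\sum_y c_{xy}=1$, so each interior value is a convex combination of its neighbors. A standard propagation argument then finishes: if $\max_{V_n}h$ were attained only at interior vertices, the identity would force equality at all neighbors of such a maximizer, and iterating along the connected graph $(V_n,E_n)$ would make $h$ constant, hence attain its maximum on $V_0$ after all; thus $\max_{V_n}h=\max_{V_0}f$, and applying the same reasoning to $-h$ gives the lower bound. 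The main obstacle is the bookkeeping in the localization step: because the subdivision levels $L_w$ vary from cell to cell, one must verify carefully that the energy is additive over cells and that the $V_m$-points of $\tilde K_w$ coincide with its corners in the inhomogeneous setting; once these are in hand, the remaining arguments are the usual finite-graph convexity and maximum-principle facts.
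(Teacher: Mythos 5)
The paper omits the proof of this proposition, declaring it standard; your argument---strictly convex finite-dimensional minimization, decoupling of the energy over level-$m$ cells via finite ramification, and the discrete maximum principle obtained from testing harmonicity against $\bfone_{\{x\}}$ to get a mean-value identity with positive conductances---is exactly that standard proof and is correct. The two points you flag as needing care in the inhomogeneous setting do hold: the energy is additive over cells because $r_{wv}=r_w r_v$ and every $w'\in\tilde W_n$ factors through a unique $w\in\tilde W_m$, and $V_m\cap\tilde K_w=\psi_w(V_0)$ because distinct same-level cells, even of different sizes, meet only in common corners.
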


\begin{proposition}\label{prop:2}
There exists a positive constant $c_4$ independent of the choice of $L=\{L_w\}_{w\in W_*}$ such that 
\[
  |f(x)-f(y)|\le c_4 r^{m/2}\cE_*(f,f)^{1/2}
\]
for all $f\in\cF_*$, $m\in\Z_+$, $w\in\tilde W_m$, and $x,y\in V_*\cap \tilde K_w$.
Here, $r:=\max_{l\in T}r^{(l)}\in(0,1)$.
\end{proposition}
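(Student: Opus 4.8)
The plan is to prove the estimate by a telescoping (chaining) argument through the nested cells containing $x$ and $y$, controlling the increment of $f$ across each scale by the energy carried on a single cell. The basic building block is a per-cell bound. For any $k\in\Z_+$ and $u\in\tilde W_k$, the single summand of $\cE^{(k)}$ indexed by $u$ is dominated by the whole: since $\{\cE^{(m)}(f|_{V_m},f|_{V_m})\}_m$ is nondecreasing with limit $\cE_*(f,f)$, we have $\tfrac1{r_u}\cQ(f\circ\psi_u|_{V_0},f\circ\psi_u|_{V_0})\le\cE_*(f,f)$. As any single squared vertex difference is bounded by $\cQ(f\circ\psi_u|_{V_0},f\circ\psi_u|_{V_0})$ and $r_u\le r^k$ (each of the $k$ factors is some $r^{(l)}\le r$), it follows that
\[
|f(p)-f(q)|\le r_u^{1/2}\cE_*(f,f)^{1/2}\le r^{k/2}\cE_*(f,f)^{1/2}
\]
whenever $p,q$ are vertices of one and the same $k$-cell. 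The same bound with $r^{k/2}$ replaced by $r^{(k+1)/2}$ holds for the two endpoints of any edge in $E_{k+1}$ lying in $\tilde K_u$, since such an edge belongs to a single $(k+1)$-cell contained in $\tilde K_u$.

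Next I would set up the chain. Fix $w\in\tilde W_m$ and a reference vertex $q^*:=\psi_w(p_1)\in\psi_w(V_0)$. Given $x\in V_*\cap\tilde K_w$, choose $n\ge m$ with $x\in V_n$; using that the cells of the inhomogeneous Sierpinski gasket meet only at vertices, one has $V_n\cap\tilde K_v=\psi_v(V_0)$ for every $v\in\tilde W_n$, so there is $v\in\tilde W_n$ with $w$ a prefix of $v$ and $x\in\psi_v(V_0)$. Writing $u_k$ for the length-$k$ prefix of $v$ ($m\le k\le n$) gives a nested chain $w=u_m\prec u_{m+1}\prec\cdots\prec u_n=v$, and for each $k$ I pick a vertex $q_k\in\psi_{u_k}(V_0)$ with $q_n=x$. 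Since $q_k$ and $q_{k+1}$ are both vertices of the first subdivision of $\tilde K_{u_k}$, whose graph is the image under $\psi_{u_k}$ of $(V^{(L_{u_k})},E^{(L_{u_k})})$, they are joined by a path in $E_{k+1}$ of length at most $\diam(V^{(L_{u_k})},E^{(L_{u_k})})\le D$, where $D:=\max_{l\in T}\diam(V^{(l)},E^{(l)})$. Applying the per-edge bound along this path gives $|f(q_k)-f(q_{k+1})|\le D\,r^{(k+1)/2}\cE_*(f,f)^{1/2}$, and summing the resulting geometric series yields
\[
|f(x)-f(q_m)|\le\sum_{k=m}^{n-1}D\,r^{(k+1)/2}\cE_*(f,f)^{1/2}\le\frac{D\,r^{1/2}}{1-r^{1/2}}\,r^{m/2}\cE_*(f,f)^{1/2}.
\]
Adding the single-cell step $|f(q_m)-f(q^*)|\le r^{m/2}\cE_*(f,f)^{1/2}$ (both are corners of $\tilde K_w$) and the symmetric estimate for $y$, the triangle inequality gives the claim with $c_4=2\bigl(1+D\,r^{1/2}/(1-r^{1/2})\bigr)$.

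The constant $c_4$ is manifestly independent of the assignment $L=\{L_w\}_{w\in W_*}$: both $r=\max_{l\in T}r^{(l)}$ and $D=\max_{l\in T}\diam(V^{(l)},E^{(l)})$ depend only on the fixed finite set $T$ and on $d$. I expect the main obstacle to be securing these two uniformities \emph{at once} — one needs the per-scale energy factor $r^{k/2}$ to decay geometrically, so that the chain sum converges, while the combinatorial length of the connecting paths stays bounded across all cells. Both hold precisely because $T$ is finite, which forces $r<1$ strictly and $\sup_{l\in T}\diam(V^{(l)},E^{(l)})<\infty$; without finiteness of $T$ this scheme would break down. A minor structural point used above, worth recording explicitly, is the finite-ramification identity $V_n\cap\tilde K_v=\psi_v(V_0)$, which guarantees that a point of $V_*$ lying in a cell is a genuine corner of that cell and hence that the nested chain can indeed be formed.
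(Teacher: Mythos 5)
Your proposal is correct and takes essentially the same route as the paper: both arguments chain from a corner of $\tilde K_w$ down through the nested cells containing the given point, connect consecutive corner representatives by paths of uniformly bounded length in the subdivision graphs (bounded because $T$ is finite), bound each edge increment by $r^{(\text{level})/2}\cE_*(f,f)^{1/2}$ via monotonicity of $\cE^{(m)}$ and domination of a single squared difference by one summand of $\cQ$, and sum the resulting geometric series. The only cosmetic difference is that the paper chains from $x$ directly toward $y$ and then reduces general $x$ to a corner at the end, whereas you connect both $x$ and $y$ to a fixed reference corner $q^*$; the constants agree in form.
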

\begin{proof}
First, we consider the case $x\in\psi_w(V_0)$.
There exist $n\in\Z_+$ and $v_i\in S$ $(i=1,2,\dots,n)$ such that $w v_1v_2\cdots v_n\in\tilde W_{m+n}$ and $y\in \psi_{w v_1v_2\cdots v_n}(V_0)$.
Let $x_0=x$ and $x_{n+1}=y$, and take 
\[
x_1\in \psi_{w}(V_0)\text{ and } 
x_i\in\psi_{w v_1v_2\cdots v_{i-1}}(V_0)\ (i=2,3,\dots,n).
\]
Note that $x_i\in V_{m+i-1}\subset V_{m+i}$ for $i=1,2,\dots,n+1$.
Since $T$ is a finite set, there exists $M\in\N$ depending only on $T$ satisfying the following: for each $i=0,1,2,\dots,n$, there exist $k\le M$ and $z_0,z_1,\dots,z_k\in V_{m+i}\cap \tilde K_{w v_1v_2\cdots v_i}$ such that $z_0=x_i$, $z_k=x_{i+1}$, and $\{z_j,z_{j+1}\}\in E_{m+i}$ for all $j=0,1,\dots,k-1$. Then
\begin{align*}
|f(x_i)-f(x_{i+1})|
&\le \sum_{j=0}^{k-1}|f(z_j)-f(z_{j+1})|\\
&\le \sum_{j=0}^{k-1}\left\{r_{wv_1v_2\cdots v_i}\cE^{(m+i)}(f|_{V_{m+i}},f|_{V_{m+i}})\right\}^{1/2}\\
&\le Mr^{(m+i)/2}\cE_*(f,f)^{1/2}.
\end{align*}
Therefore,
\begin{align}
|f(x)-f(y)|
&\le \sum_{i=0}^{n}|f(x_i)-f(x_{i+1})|\nonumber\\
&\le \sum_{i=0}^{n}Mr^{(m+i)/2}\cE_*(f,f)^{1/2}\nonumber\\
&\le \frac{M}{1-r^{1/2}}r^{m/2}\cE_*(f,f)^{1/2}.\label{eq:2}
\end{align}
For general $x$, we take $z\in\psi_w(V_0)$ and apply \eqref{eq:2} to the pairs $\{z,x\}$ and $\{z,y\}$ to obtain
\[
|f(x)-f(y)|\le |f(z)-f(x)|+|f(z)-f(y)| \le \frac{2M}{1-r^{1/2}}r^{m/2}\cE_*(f,f)^{1/2}.
\]
Thus, it suffices to take $c_4={2M}/(1-r^{1/2})$.
\end{proof}
\begin{proposition}\label{prop:3}
Each $f\in\cF_*$ is uniformly continuous on $V_*$.
\end{proposition}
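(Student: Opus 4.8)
The plan is to upgrade the uniform oscillation estimate of Proposition~\ref{prop:2} into a genuine modulus of continuity by exploiting the finite ramification of $K=G(L)$ at each fixed scale. Writing $E:=\cE_*(f,f)^{1/2}$, I would first dispose of the degenerate case: if $E=0$, then Proposition~\ref{prop:2} applied with $m=0$ and $w=\emptyset$ (so that $\tilde K_\emptyset=\tilde K\supset V_*$) forces $|f(x)-f(y)|\le c_4 E=0$ for all $x,y\in V_*$, i.e.\ $f$ is constant and hence uniformly continuous. So I may assume $E>0$.

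Given $\eps>0$, I would choose $m\in\Z_+$ so large that $c_4 r^{m/2}E<\eps/2$. By Proposition~\ref{prop:2} this makes the oscillation of $f$ over $V_*\cap\tilde K_w$ strictly less than $\eps/2$ for every $w\in\tilde W_m$. Since $T$ is finite, each $\tilde W_m$ is a finite set, and the level-$m$ cells $\{\tilde K_w\}_{w\in\tilde W_m}$ form a finite cover of $K\supset V_*$ (because $K\subset\bigcup_{w\in\tilde W_m}\tilde K_w$). This finiteness is exactly what lets me trade the scale parameter $m$ for a spatial parameter $\dl$.

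Next I would set $\dl$ to be the minimum of the (finitely many, and strictly positive, being distances between disjoint compact sets) numbers $\mathrm{dist}(\tilde K_w,\tilde K_{w'})$ over all pairs $w,w'\in\tilde W_m$ with $\tilde K_w\cap\tilde K_{w'}=\emptyset$, with the convention $\dl=1$ if no disjoint pair occurs; thus $\dl>0$. Now take $x,y\in V_*$ with $|x-y|<\dl$, and pick $w,w'\in\tilde W_m$ with $x\in\tilde K_w$ and $y\in\tilde K_{w'}$. The bound $|x-y|<\dl$ excludes the possibility that $\tilde K_w$ and $\tilde K_{w'}$ are disjoint, so either $w=w'$ or the two cells intersect. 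If $w=w'$, then $|f(x)-f(y)|<\eps/2<\eps$ directly. If the cells are distinct but intersect, I would invoke the key geometric feature of the gasket: distinct level-$m$ cells meet only at common vertices, so $\tilde K_w\cap\tilde K_{w'}\subset V_m\subset V_*$; choosing a point $p$ in this intersection, we have $p\in V_*\cap\tilde K_w$ and $p\in V_*\cap\tilde K_{w'}$, and two applications of Proposition~\ref{prop:2} yield
\[
|f(x)-f(y)|\le|f(x)-f(p)|+|f(p)-f(y)|<\tfrac{\eps}{2}+\tfrac{\eps}{2}=\eps.
\]
Since $\dl$ depends only on $\eps$ and $m$, this gives uniform continuity of $f$ on $V_*$.

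The only nonroutine ingredient is the finite-ramification statement that two distinct cells of the same word length intersect only along $V_*$ (indeed at a single shared vertex), which I expect to be the one point requiring care. It follows from the construction of $G(L)$: within any parent cell the retained, equally oriented subsimplices are separated by the removed regions and therefore abut only at the subdivision vertices lying in $V_m$, while two cells contained in different parents can meet only inside the intersection of their parents, which is itself a vertex belonging to $V_{m-1}\subset V_m$. Everything else is a soft compactness-and-finiteness argument, and no estimate beyond Proposition~\ref{prop:2} is needed.
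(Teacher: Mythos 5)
Your proof is correct and follows essentially the same route as the paper: both arguments reduce the problem to two cells of a common level $m$ that intersect, bridge through a shared vertex in $\psi_w(V_0)\cap\psi_{w'}(V_0)\subset V_*$, and apply Proposition~\ref{prop:2} twice; your ``minimum distance between disjoint level-$m$ cells'' plays the role of the paper's constant $c_5$, and the finite-ramification fact you flag is likewise used (implicitly) in the paper when it chooses $z\in\psi_w(V_0)\cap\psi_{w'}(V_0)$. The only difference is that the paper chooses the scale $m$ adapted to $|x-y|$ and thereby extracts the stronger quantitative conclusion $|f(x)-f(y)|\le c_6|x-y|^{\gm}$ (H\"older continuity), whereas your fixed-$m$, $\eps$--$\dl$ version yields exactly the uniform continuity stated.
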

\begin{proof}
Let $\diam A$ denote the diameter of $A$ for $A\subset \R^d$.
First, we note that there exists $c_5>0$ depending only $d$ and $T$ that satisfies the following: for any $m\in\Z_+$ and $x,y\in V_*$, $|x-y|\le c_5\min_{w\in\tilde W_m}\diam \tilde K_w$ implies that there exist $w,w'\in \tilde W_m$ such that $x\in K_w$, $y\in K_{w'}$, and $K_w\cap K_{w'}\ne\emptyset$. 

Let $N=\max T\,(\ge2)$ and $\rho=c_5 \diam \tilde K$.
Take any $x,y\in V_*$ with $0<|x-y|\le \rho$.
Let $\dl=|x-y|$.
Take the largest $m\in\Z_+$ such that $\rho N^{-m}\ge \dl$.
Since $\rho N^{-m-1}<\dl$, it holds that $m>\log_N(\rho/\dl)-1$.
Since the contraction ratio of $\psi_v$ with any $v\in S^{(l)}$ for $l\in T$ is $1/l\,(\ge 1/N)$, we have $\diam \tilde K_w\ge N^{-m}\diam \tilde K$ for all $w\in\tilde W_m$. Then, 
\[
c_5\min_{w\in\tilde W_m}\diam \tilde K_w\ge \rho N^{-m}\ge\dl=|x-y|.
\]
This implies that there exist $w,w'\in \tilde W_m$ such that $x\in K_w$, $y\in K_{w'}$, and $K_w\cap K_{w'}\ne\emptyset$.
Choose $z\in \psi_w(V_0)\cap \psi_{w'}(V_0)$.
From Proposition~\ref{prop:2},
\[
|f(x)-f(y)|\le|f(x)-f(z)|+|f(z)-f(y)|\le 2c_4 r^{m/2}\cE_*(f,f)^{1/2}.
\]
Since 
\begin{align*}
r^{m/2}
&<r^{\{\log_N(\rho/\dl)-1\}/2}
=r^{-1/2}\left(\frac{\dl}{\rho}\right)^{-(\log_N r)/2}\\
&=r^{-1/2}\rho^{(\log_N r)/2}|x-y|^{-(\log_N r)/2},
\end{align*}
$|f(x)-f(y)|\le c_6 |x-y|^\gm$ for with $c_6=2c_4r^{-1/2}\rho^{(\log_N r)/2}\cE_*(f,f)^{1/2}$ and $\gm=-(\log_N r)/2$.

Thus, we conclude that $f$ is uniformly continuous on $V_*$.
\end{proof}
From this proposition, any $f\in\cF_*$ can extend to a continuous function on $K$ uniquely.
We set
\[
\cF=\{f\in C(K)\mid f|_{V_*}\in\cF_*\}
\quad\text{and}\quad
\cE(f,g)=\cE_*(f|_{V_*},g|_{V_*}),\ f,g\in\cF.
\]
Proposition~\ref{prop:2} immediately implies the following.
\begin{proposition}\label{prop:poincare}
For any $f\in\cF$,
\[
\osc_{K} f \le c_4 \cE(f,f)^{1/2}.
\]
\end{proposition}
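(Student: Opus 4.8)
The plan is to read off the statement as the special case $m=0$ of Proposition~\ref{prop:2}, followed by a short density argument; I do not expect any genuine obstacle here, since the estimate is essentially a normalization of the H\"older-type bound already established.

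First I would specialize Proposition~\ref{prop:2} to $m=0$. Since $\tilde W_0=\{\emptyset\}$ and $\psi_\emptyset$ is the identity map, we have $\tilde K_\emptyset=\tilde K$, and because $V_*\subset\tilde K$ this gives $V_*\cap\tilde K_\emptyset=V_*$. Applying Proposition~\ref{prop:2} with $m=0$ and $w=\emptyset$ to $f|_{V_*}\in\cF_*$ (using $r^0=1$) then yields
\[
|f(x)-f(y)|\le c_4\,\cE_*(f|_{V_*},f|_{V_*})^{1/2}=c_4\,\cE(f,f)^{1/2}
\qquad\text{for all }x,y\in V_*,
\]
where the equality is just the definition $\cE(f,f)=\cE_*(f|_{V_*},f|_{V_*})$. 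Taking the supremum over $x,y\in V_*$ gives $\osc_{V_*}f\le c_4\,\cE(f,f)^{1/2}$.

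It then remains to pass from the oscillation over $V_*$ to the oscillation over $K$. Here I would invoke that $f\in\cF\subset C(K)$ together with the fact, noted above, that the closure of $V_*$ in $\R^d$ coincides with $K=G(L)$; thus $V_*$ is dense in $K$, and the continuity of $f$ (guaranteed by Proposition~\ref{prop:3}) forces $\sup_K f=\sup_{V_*}f$ and $\inf_K f=\inf_{V_*}f$, whence $\osc_K f=\osc_{V_*}f$. Combining this with the previous display completes the argument. The only point deserving a word of justification is this last density step, and it is immediate from the definition of $\cF$ and the uniform continuity already proved; the estimate itself requires no new computation.
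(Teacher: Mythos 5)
Your proposal is correct and matches the paper's intent: the paper simply states that Proposition~\ref{prop:2} ``immediately implies'' the result, and your specialization to $m=0$, $w=\emptyset$ together with the density of $V_*$ in $K$ and the continuity of $f\in\cF$ is exactly the routine verification being left implicit. No issues.
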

This proposition in particular implies that every non-empty subset of $K$ has positive capacity.

Combining Proposition~\ref{prop:1} and Proposition~\ref{prop:3}, we obtain the following.
\begin{proposition}\label{prop:4}
Let $m\in\Z_+$. For each $f\in l(V_m)$ there exists a unique $h\in \cF$ such that $h|_{V_m}=f$ and $\cE(h,h)=\cE^{(m)}(f,f)$.
Moreover, for any $w\in\tilde W_m$ and $x\in K_w$,
\begin{equation}\label{eq:max}
\min_{\psi_w(V_0)}f\le h(x)\le \max_{\psi_w(V_0)}f.
\end{equation}
\end{proposition}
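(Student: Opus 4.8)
The plan is to obtain $h$ as the limit of the discrete piecewise harmonic extensions furnished by Proposition~\ref{prop:1}, and then to read off the maximum principle from its level-$n$ approximations.

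For \textbf{existence}, I would first use Proposition~\ref{prop:1} to define, for every $n\ge m$, the unique $h^{(n)}\in l(V_n)$ with $h^{(n)}|_{V_m}=f$ and $\cE^{(n)}(h^{(n)},h^{(n)})=\inf\{\cE^{(n)}(g,g)\mid g|_{V_m}=f\}$. Iterating the consistency relation between consecutive levels gives $\cE^{(n)}(h^{(n)},h^{(n)})=\cE^{(m)}(f,f)$ for all $n\ge m$. The crucial point is the tower compatibility $h^{(n+1)}|_{V_n}=h^{(n)}$: applying Proposition~\ref{prop:1} between levels $n$ and $n+1$, any $g\in l(V_n)$ with $g|_{V_m}=f$ extends to some $\bar g\in l(V_{n+1})$ with $\bar g|_{V_n}=g$ and $\cE^{(n+1)}(\bar g,\bar g)=\cE^{(n)}(g,g)$, so $\bar g$ competes in the level-$(n+1)$ problem; combined with $\cE^{(n)}(h^{(n+1)}|_{V_n},h^{(n+1)}|_{V_n})\le \cE^{(n+1)}(h^{(n+1)},h^{(n+1)})$, this forces $h^{(n+1)}|_{V_n}$ to be minimal at level $n$, whence $h^{(n+1)}|_{V_n}=h^{(n)}$ by uniqueness. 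The $h^{(n)}$ therefore glue to a single $\tilde h\in l(V_*)$ with $\tilde h|_{V_n}=h^{(n)}$, so $\cE^{(n)}(\tilde h|_{V_n},\tilde h|_{V_n})=\cE^{(m)}(f,f)$ for all $n$, giving $\tilde h\in\cF_*$ with $\cE_*(\tilde h,\tilde h)=\cE^{(m)}(f,f)$. By Proposition~\ref{prop:3}, $\tilde h$ extends to a continuous function $h$ on $K$, which then lies in $\cF$ and satisfies $h|_{V_m}=f$ and $\cE(h,h)=\cE^{(m)}(f,f)$.

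For \textbf{uniqueness}, I would first observe that, for any $g\in\cF$ with $g|_{V_m}=f$, the level-$n$ minimality of $h^{(n)}$ yields $\cE^{(n)}(g|_{V_n},g|_{V_n})\ge\cE^{(m)}(f,f)$ for every $n$, so in the limit $\cE(g,g)\ge\cE^{(m)}(f,f)$; thus $\cE^{(m)}(f,f)$ is exactly the infimum over admissible $g$. If $h_1,h_2\in\cF$ both attain it with $h_i|_{V_m}=f$, then $\tfrac12(h_1+h_2)$ is again admissible, and the parallelogram identity
\[
\cE\!\left(\tfrac{h_1+h_2}{2},\tfrac{h_1+h_2}{2}\right)=\tfrac12\cE(h_1,h_1)+\tfrac12\cE(h_2,h_2)-\tfrac14\cE(h_1-h_2,h_1-h_2)
\]
together with the minimality of the left-hand side forces $\cE(h_1-h_2,h_1-h_2)\le0$, hence $=0$. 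Since $(\cE_*,\cF_*)$ is a resistance form, $h_1-h_2$ is constant on $V_*$; as it vanishes on $V_m\supset V_0\neq\emptyset$, we conclude $h_1=h_2$.

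Finally, for the \textbf{maximum principle}, fix $w\in\tilde W_m$. For every $n\ge m$ and every $x\in V_n\cap\tilde K_w$, Proposition~\ref{prop:1} gives $\min_{\psi_w(V_0)}f\le h^{(n)}(x)\le\max_{\psi_w(V_0)}f$, and since $h(x)=h^{(n)}(x)$ for $x\in V_n$, these bounds hold for all $x\in V_*\cap\tilde K_w=V_*\cap K_w$. Because $V_*\cap K_w$ is dense in $K_w$ and $h$ is continuous, \eqref{eq:max} extends to all $x\in K_w$. I expect the \textbf{main obstacle} to be establishing the tower compatibility $h^{(n+1)}|_{V_n}=h^{(n)}$ cleanly: it is precisely what makes the discrete extensions consistent across scales, so that they assemble into an element of $\cF_*$ of the prescribed energy, and it rests on invoking Proposition~\ref{prop:1} between \emph{consecutive} levels together with the uniqueness of the harmonic extension.
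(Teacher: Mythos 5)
Your proof is correct and follows exactly the route the paper intends: the paper gives no written proof, stating only that the result is obtained by combining Proposition~\ref{prop:1} and Proposition~\ref{prop:3}, and your argument is a careful fleshing-out of precisely that combination (tower compatibility of the discrete minimizers, gluing, continuous extension, and the maximum principle by density). The uniqueness step via the parallelogram identity and the resistance-form property is also sound.
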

Such $h$ will be called an $m$-harmonic function.
$0$-harmonic functions are nothing but harmonic functions on $K\setminus V_0$.
\begin{proposition}\label{prop:5}
By regarding $C(K)$ as a subspace of $L^2(K,\mu)$, $(\cE,\cF)$ is a strongly local regular Dirichlet form on $L^2(K,\mu)$.
\end{proposition}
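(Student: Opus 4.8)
The strategy is to transfer the algebraic and Hilbert-space properties from the resistance form $(\cE_*,\cF_*)$ on $V_*$ to $(\cE,\cF)$ through the restriction map, and to establish regularity and strong locality directly from the piecewise-harmonic approximation and the cellwise structure of $\cE^{(m)}$. First I would record that $\iota\colon\cF\ni f\mapsto f|_{V_*}$ is a linear bijection onto $\cF_*$: injectivity holds because $V_*$ is dense in $K$ and elements of $\cF$ are continuous, while surjectivity is exactly Proposition~\ref{prop:3}, which extends any $\phi\in\cF_*$ to a unique continuous function on $K$ lying in $\cF$. Hence $\cF$ is a linear space and the symmetry, bilinearity and non-negative definiteness of $\cE$ follow at once from those of $\cE_*$. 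The Markov property transfers the same way: for $f\in\cF$ the unit contraction $\hat f=(0\vee f)\wg1$ is continuous on $K$ and restricts on $V_*$ to the unit contraction of $f|_{V_*}\in\cF_*$, which lies in $\cF_*$ with non-increasing energy by the Markov property of the resistance form; thus $\hat f\in\cF$ and $\cE(\hat f,\hat f)\le\cE(f,f)$.

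For density and regularity, note that $K$ is compact, so $C_c(K)=C(K)\supset\cF$ and therefore $\cF\cap C_c(K)=\cF$; it then suffices to show that $\cF$ is uniformly dense in $C(K)$, which also yields density in $L^2(K,\mu)$ since $C(K)$ is dense there. Given $g\in C(K)$ and $\eps>0$, uniform continuity lets me choose $m$ so large that $\osc_{K_w}g<\eps$ for every $w\in\tilde W_m$; letting $h$ be the $m$-harmonic function with $h|_{V_m}=g|_{V_m}$ from Proposition~\ref{prop:4}, the maximum principle \eqref{eq:max} forces $h(x)$ and $g(x)$ to lie in the common interval $[\min_{K_w}g,\max_{K_w}g]$ for $x\in K_w$, so $\|h-g\|_\infty<\eps$ with $h\in\cF$.

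\emph{Closedness is the main technical point.} Let $\{f_n\}$ be Cauchy for $(\cdot,\cdot)_\cF$. Proposition~\ref{prop:poincare} gives $\osc_K(f_n-f_m)\le c_4\cE(f_n-f_m,f_n-f_m)^{1/2}\to0$, so after subtracting the means $a_n=\fint_K f_n\,\dd\mu$ the functions $f_n-a_n$ are uniformly Cauchy; the $L^2$-convergence of $\{f_n\}$ then forces $a_n\to a$, producing a uniform limit $f\in C(K)$ with $f_n\to f$ in $L^2$. To identify $f$ with the energy limit I would invoke the completeness of the resistance form modulo constants (a defining property, cf.\ \cite{Ki01}): $\{f_n|_{V_*}\}$ converges in $\cF_*/\R$, and the resistance estimate of Proposition~\ref{prop:2} applied with a fixed base point turns this into pointwise convergence up to constants, which the uniform convergence identifies with $f|_{V_*}$. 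Hence $f\in\cF$ and $f_n\to f$ in $\cF$.

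Finally, for strong locality, suppose $v\in\cF$ is constant on an open set $O\supset\supp[u]$. The sets $\supp[u]$ and $K\setminus O$ are disjoint and compact, so $\dl:=\mathrm{dist}(\supp[u],K\setminus O)>0$. Writing $\cE(u,v)=\lim_m\cE^{(m)}(u|_{V_m},v|_{V_m})$ with $\cE^{(m)}(u,v)=\sum_{w\in\tilde W_m}r_w^{-1}\cQ(u\circ\psi_w|_{V_0},v\circ\psi_w|_{V_0})$, I would fix $m$ large enough that $\diam\tilde K_w<\dl$ for all $w\in\tilde W_m$. Then each cell $K_w$ either lies in $O$---so $v$ is constant on $\psi_w(V_0)$ and $\cQ(\cdot,v\circ\psi_w|_{V_0})=0$---or misses $\supp[u]$, in which case $u$ vanishes on $K_w\supset\psi_w(V_0)$ (a continuous function vanishing off $\supp[u]$, since $\mu$ has full support) and $\cQ(u\circ\psi_w|_{V_0},\cdot)=0$. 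Thus $\cE^{(m)}(u|_{V_m},v|_{V_m})=0$ for all large $m$, whence $\cE(u,v)=0$. The one step demanding genuine care is closedness, where the completeness of the resistance form only holds modulo constants, so the oscillation estimate of Proposition~\ref{prop:poincare} and the $L^2$-convergence must be used together to fix the additive constant and match the energy limit with the uniform limit $f$.
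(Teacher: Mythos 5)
Your proposal is correct. The regularity argument (piecewise $m$-harmonic interpolation plus the maximum principle \eqref{eq:max}) and the first half of the closedness argument (Proposition~\ref{prop:poincare} to get uniform convergence after subtracting means, then recovering the additive constant from $L^2$-convergence) coincide with the paper's proof. Where you diverge is in identifying the uniform limit $f$ with the energy limit: you invoke the completeness of the resistance form $(\cE_*,\cF_*)$ modulo constants, cited as a defining property from \cite{Ki01}, together with the resistance estimate of Proposition~\ref{prop:2}. The paper instead argues directly and more elementarily: for each fixed $k$, the finite-dimensional form $\cE^{(k)}((f-f_n)|_{V_k},\cdot)$ is the pointwise limit in $m$ of $\cE^{(k)}((f_m-f_n)|_{V_k},\cdot)$, which is bounded by $\varliminf_m\cE(f_m-f_n,f_m-f_n)$; taking the supremum over $k$ gives $f-f_n\in\cF$ and $\cE(f-f_n,f-f_n)\to0$ without appealing to resistance-form completeness. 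Both routes are valid (the paper does assert that $(\cE_*,\cF_*)$ is a resistance form, so the cited completeness is available to you), but the paper's version is self-contained and avoids the slightly delicate step of matching the abstract limit in $\cF_*/\R$ with $f|_{V_*}$. Conversely, your treatment of strong locality is more detailed than the paper's, which dismisses it as following "from the definition": your cell-decomposition dichotomy (each small cell either lies in $O$, killing the $v$-factor, or misses $\supp[u]$, killing the $u$-factor) is exactly the argument the paper leaves implicit, and it is carried out correctly, including the observation that for continuous $u$ and fully supported $\mu$ the measure-theoretic support agrees with the topological one.
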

\begin{proof}
First, we prove the closedness.
Suppose that a sequence $\{f_n\}_{n=1}^\infty$ in $\cF$ satisfies
\[
\cE(f_m-f_n,f_m-f_n)+\|f_m-f_n\|_{L^2(K,\mu)}^2\to0
\quad \text{as }m,n\to\infty.
\]
Since $\{\int_K (f_m-f_n)\,\dd\mu\}^2\le \mu(K)\|f_m-f_n\|_{L^2(K,\mu)}^2$, $\{\int_K f_n\,\dd\mu\}_{n=1}^\infty$ is a Cauchy sequence, hence converges as $n\to\infty$.
Let $g_n=f_n-\int_K f_n\,\dd\mu$ for $n\in\N$. Then
\begin{align*}
\|g_m-g_n\|_\infty &\le \osc_{K}(g_m-g_n)
=\osc_{K}(f_m-f_n)\\
&\le c_4\cE(f_m-f_n,f_m-f_n)^{1/2}\quad\text{(from Proposition~\ref{prop:poincare})}\\
&\to 0\quad \text{as }m,n\to\infty.
\end{align*}
Therefore, $\{g_n\}_{n=1}^\infty$ converges uniformly, so does $\{f_n\}_{n=1}^\infty$. Let $f\in C(K)$ denote the limit of $\{f_n\}_{n=1}^\infty$.
Then, for every $k\in\Z_+$,
\begin{align*}
\cE^{(k)}((f-f_n)|_{V_k},(f-f_n)|_{V_k})
&=\lim_{m\to\infty}\cE^{(k)}((f_m-f_n)|_{V_k},(f_m-f_n)|_{V_k})\\
&\le\varliminf_{m\to\infty}\cE(f_m-f_n,f_m-f_n).
\end{align*}
Since the right-hand side is independent of $k$, it holds that $f-f_n\in\cF$, in particular, $f\in\cF$. Moreover,
\begin{align*}
\cE(f-f_n,f-f_n)&=\lim_{k\to\infty}\cE^{(k)}((f-f_n)|_{V_k},(f-f_n)|_{V_k})\\
&\le \varliminf_{m\to\infty}\cE(f_m-f_n,f_m-f_n)
\to 0\quad \text{as }n\to\infty.
\end{align*}
Since $f_n\to f$ in $L^2(K,\mu)$ by the uniform convergence, this implies that $f_n$ converges to $f$ in $\cF$, which means the closedness of $(\cE,\cF)$.

The symmetry, the Markov property, and the strong locality of $(\cE,\cF)$ follows from the definition.

Concerning the regularity, it suffices to prove that $\cF$ is dense in $C(K)$ since $\cF\subset C(K)$.
Take an arbitrary $f\in C(K)$. Given $\eps>0$, there exists $\dl>0$ such that $x,y\in K$ with $|x-y|\le\dl$ implies $|f(x)-f(y)|\le \eps$ from the uniform continuity of $f$.
Take $m\in\Z_+$ such that $\max_{w\in\tilde W_m}\diam\tilde K_w\le\dl$.
Take $h\in\cF$ in Proposition~\ref{prop:4} with $f$ replaced by $f|_{V_m}$.
Choose any $x\in K$. There exists $w\in\tilde W_m$ such that $x\in K_w$. Take any $y\in \psi_w(V_0)$.
From \eqref{eq:max},
\[
|h(x)-f(y)|\le\osc_{\psi_w(V_0)}f\le \eps.
\]
Combining this inequality with $|f(x)-f(y)|\le\eps$, we obtain that $|h(x)-f(x)|\le 2\eps$. Therefore, $\|h-f\|_\infty\le 2\eps$. This implies that $\cF$ is dense in $C(K)$.
\end{proof}
For $v\in\tilde W_*$, let $L^{[v]}$ denote $\{L_{vw}\}_{w\in W_*}\in T^{W_*}$. We write $K^{[v]}$ for $G(L^{[v]})$ and define a Borel measure $\mu^{[v]}$ on $K^{[v]}$ by
\[
\mu^{[v]}=\frac1{\mu(K_v)}\left((\psi_v|_{K^{[v]}})^{-1}\right)_*\mu.
\]
By considering $(K^{[v]},\mu^{[v]})$ in place of $(K,\mu)$, we can define a canonical Dirichlet form $(\cE^{[v]},\cF^{[v]})$ on $L^2(K^{[v]},\mu^{[v]})$. The energy measure of $f\in\cF^{[v]}$ associated with $(\cE^{[v]},\cF^{[v]})$ is denoted by $\nu_f^{[v]}$.

For $f\in\cF$, $m\in\Z_+$, and $v\in\tilde W_m$, let $f^{[v]}=f\circ\psi_v|_{K^{[v]}}$.
From the definition, the following holds.
\begin{lemma}[cf.\ {\cite[Lemma~3.2]{HY22}}]\label{lem:6}
$f^{[v]}\in\cF^{[v]}$. Moreover, it holds that
\[
\cE(f,f)=\sum_{v\in\tilde W_m}\frac1{r_v}\cE^{[v]}(f^{[v]},f^{[v]}).
\]
Furthermore, if $f$ is harmonic on $K_v\setminus \psi_v(V_0)$ with respect to $(\cE,\cF)$, then $f^{[v]}$ is harmonic on $K^{[v]}\setminus V_0$ with respect to $(\cE^{[v]},\cF^{[v]})$.
\end{lemma}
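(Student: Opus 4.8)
The plan is to reduce everything to the discrete approximations $\cE^{(m)}$ and the combinatorial structure of $\tilde W_*$, exploiting that the maps $\psi_{i^l}=\psi_i^{(l)}$ and the weights $r_{i^l}=r^{(l)}$ depend only on the letters and not on the hierarchy $L$. First I would record the basic correspondence: writing $\tilde W_k^{[v]}$ and $V_k^{[v]}$ for the analogues of $\tilde W_k$ and $V_k$ attached to the shifted hierarchy $L^{[v]}$, one checks by induction on $k$ that $u\mapsto vu$ is a bijection from $\tilde W_k^{[v]}$ onto $\{w\in\tilde W_{m+k}\mid w=vu\text{ for some }u\}$, that on $V_0$ one has $f^{[v]}\circ\psi_u=f\circ\psi_{vu}$, and that $r_{vu}=r_vr_u$. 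Substituting these into the definition of $\cE^{[v],(k)}$ yields
\[
\frac1{r_v}\cE^{[v],(k)}\bigl(f^{[v]}|_{V_k^{[v]}},f^{[v]}|_{V_k^{[v]}}\bigr)=\sum_{u\in\tilde W_k^{[v]}}\frac1{r_{vu}}\cQ\bigl(f\circ\psi_{vu}|_{V_0},f\circ\psi_{vu}|_{V_0}\bigr),
\]
so that the left-hand side is exactly the block of $\cE^{(m+k)}(f|_{V_{m+k}},f|_{V_{m+k}})$ indexed by the cells contained in $K_v$.

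Second, the membership claim $f^{[v]}\in\cF^{[v]}$ follows: $f^{[v]}=f\circ\psi_v|_{K^{[v]}}$ is continuous, and the displayed block is bounded above by $\cE^{(m+k)}(f|_{V_{m+k}},\cdot)\le\cE(f,f)$, so $\lim_k\cE^{[v],(k)}(\ldots)\le r_v\cE(f,f)<\infty$, which places $f^{[v]}$ in $\cF^{[v]}$. Summing the identity over the finitely many $v\in\tilde W_m$ gives $\sum_{v\in\tilde W_m}\frac1{r_v}\cE^{[v],(k)}(\ldots)=\cE^{(m+k)}(f|_{V_{m+k}},\ldots)$, and letting $k\to\infty$ (each term converges monotonically, the sum is finite) yields the energy identity. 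The corresponding bilinear identity follows by polarization.

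Third, for the harmonicity claim I would argue through Lemma~\ref{lem:3.2}. Since every point of $K$ has positive capacity, q.e.\ coincides with everywhere and all relevant functions are continuous. Given $g\in\cF^{[v]}$ with $g=0$ on $V_0$, transplant it to $K$ by setting $\tilde g=g\circ(\psi_v|_{K^{[v]}})^{-1}$ on $K_v$ and $\tilde g=0$ off $K_v$; the condition $g|_{V_0}=0$ is exactly what makes $\tilde g$ continuous across the contact vertices $\psi_v(V_0)$, so $\tilde g\in\cF$, and $\tilde g=0$ on $K\setminus U$ with $U=K_v\setminus\psi_v(V_0)$ (which is relatively open because distinct level-$m$ cells meet only at vertices). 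Applying the polarized energy identity to the pair $(f,\tilde g)$, all summands with $w\ne v$ vanish because $\tilde g^{[w]}\equiv0$ there, leaving $\cE(f,\tilde g)=\frac1{r_v}\cE^{[v]}(f^{[v]},g)$. Since $f$ is harmonic on $U$ and $\tilde g=0$ on $K\setminus U$, Lemma~\ref{lem:3.2} gives $\cE(f,\tilde g)=0$, hence $\cE^{[v]}(f^{[v]},g)=0$; as $g$ was arbitrary, Lemma~\ref{lem:3.2} applied on $K^{[v]}$ shows that $f^{[v]}$ is harmonic on $K^{[v]}\setminus V_0$.

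The main obstacle I anticipate is not the energy bookkeeping, which is essentially forced once the bijection and the weight factorization $r_{vu}=r_vr_u$ are in place, but the harmonicity step: one must verify that the zero-extension $\tilde g$ genuinely lies in $\cF$ (continuity at $\psi_v(V_0)$, which relies on $g|_{V_0}=0$ together with the vertex-only contact of cells) and that $\tilde g^{[w]}\equiv0$ for every $w\in\tilde W_m\setminus\{v\}$, so that the polarized identity collapses to the single term indexed by $v$. These are precisely the points where the finitely ramified geometry of the gasket is used.
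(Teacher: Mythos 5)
Your proof is correct, and it fills in an argument that the paper itself omits entirely (Lemma~\ref{lem:6} is introduced with only the remark ``From the definition, the following holds,'' with a pointer to \cite[Lemma~3.2]{HY22}). The route you take --- the block decomposition of $\cE^{(m+k)}$ via the bijection $u\mapsto vu$ and the factorization $r_{vu}=r_vr_u$, monotone convergence of each block for the energy identity, and the zero-extension transplant combined with Lemma~\ref{lem:3.2} for the harmonicity claim --- is exactly the standard argument the paper is implicitly invoking, and your attention to the continuity of $\tilde g$ across $\psi_v(V_0)$ and to the vanishing of the off-diagonal blocks is where the finitely ramified structure is genuinely needed.
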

From this lemma, it holds that
\[
\int_K g\,\dd\nu_f=\sum_{v\in\tilde W_m}\frac1{r_v}\int_{K^{[v]}}g\circ \psi_v|_{K^{[v]}}\,\dd\nu_{f^{[v]}}^{[v]},\quad g\in\cF,
\]
and therefore
\begin{equation}\label{eq:nu}
\nu_f(A)=\sum_{v\in\tilde W_m}\frac1{r_v}\nu_{f^{[v]}}^{[v]}\left((\psi_v|_{K^{[v]}})^{-1}(A)\right)
\end{equation}
for Borel sets $A$ of $K$.
Noting that energy measures for strongly local Dirichlet forms do not have masses on one points, \eqref{eq:nu} implies that
\[
\nu_f(A)=\frac1{r_v}\nu_{f^{[v]}}^{[v]}\left((\psi_v|_{K^{[v]}})^{-1}(A)\right)
\]
if the Borel set $A$ is a subset of $K_v$ for some $v\in\tilde W_*$.
In particular, by letting $A=K_v$,
\begin{equation}\label{eq:A}
\nu_f(K_v)=\frac1{r_v}\nu_{f^{[v]}}^{[v]}(K^{[v]})
=\frac2{r_v}\cE^{[v]}(f^{[v]},f^{[v]}).
\end{equation}
\begin{lemma}\label{lem:7}
Let $w\in\tilde W_*$ and $h\in\cF$ be harmonic on $K_w\setminus \psi_w(V_0)$ with respect to $(\cE,\cF)$. Then,
\[
\nu_h(K_w)=\frac2{r_w} \cQ(h\circ\psi_w|_{V_0},h\circ\psi_w|_{V_0}).
\]
\end{lemma}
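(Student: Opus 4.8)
The plan is to reduce the statement to the scaling identity \eqref{eq:A} together with the harmonic-extension characterization of Proposition~\ref{prop:4}, applied to the rescaled Dirichlet form on $K^{[w]}$. First, applying \eqref{eq:A} with $f=h$ and $v=w$ gives immediately
\[
\nu_h(K_w)=\frac2{r_w}\cE^{[w]}(h^{[w]},h^{[w]}),
\]
where $h^{[w]}=h\circ\psi_w|_{K^{[w]}}$. This transports the problem from the subcell $K_w$ of $K$ to the whole space $K^{[w]}$ carrying its canonical Dirichlet form $(\cE^{[w]},\cF^{[w]})$.

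Next, since $h$ is harmonic on $K_w\setminus\psi_w(V_0)$, the last assertion of Lemma~\ref{lem:6} guarantees that $h^{[w]}$ is harmonic on $K^{[w]}\setminus V_0$ with respect to $(\cE^{[w]},\cF^{[w]})$; that is, $h^{[w]}$ is a $0$-harmonic function on $K^{[w]}$. Applying Proposition~\ref{prop:4} at level $m=0$ to the form on $K^{[w]}$, and recalling that the level-$0$ form is exactly $\cQ$ (because $\tilde W_0=\{\emptyset\}$, $r_\emptyset=1$, and $\psi_\emptyset$ is the identity), the energy of such a $0$-harmonic function equals $\cQ$ of its boundary values:
\[
\cE^{[w]}(h^{[w]},h^{[w]})=\cQ\bigl(h^{[w]}|_{V_0},\,h^{[w]}|_{V_0}\bigr).
\]
Here $\cQ$ is common to all inhomogeneous Sierpinski gaskets, since it depends only on the fixed vertex set $V_0$ and edge set $E_0$. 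I also use that each point of $V_0$ has positive capacity (Proposition~\ref{prop:poincare}), so that the ``q.e.\ on $V_0$'' constraint in the definition of harmonicity is a genuine pointwise condition on the finite set $V_0$, making the variational notion of harmonicity coincide with the $0$-harmonic extension of Proposition~\ref{prop:4}.

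Finally, I would observe that $h^{[w]}|_{V_0}=h\circ\psi_w|_{V_0}$, directly from $h^{[w]}=h\circ\psi_w|_{K^{[w]}}$ and the fact that $\psi_w$ maps $V_0$ onto the vertex set $\psi_w(V_0)$ of the cell $K_w$. Substituting back yields the claimed identity. The argument is essentially a bookkeeping assembly of previously established facts, so no step presents a genuine analytic difficulty; the only point requiring slight care is the identification of the two notions of harmonicity, which is precisely what the positivity of point capacities secures.
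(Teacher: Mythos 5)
Your proof is correct and follows essentially the same route as the paper, whose proof is simply the one-line remark that the lemma follows from \eqref{eq:A}; you have filled in the implicit steps (Lemma~\ref{lem:6} for harmonicity of $h^{[w]}$, the identification of the minimal energy with $\cQ$ of the boundary values via the level-$0$ harmonic extension, and the positivity of point capacities to reconcile the q.e.\ constraint with a pointwise one) exactly as intended.
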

\begin{proof}
This follows from \eqref{eq:A}.
\end{proof}
Recall Proposition~\ref{prop:1}.
For $l\ge2$ and $f\in l(V_0)$, there exists a unique $h\in l(V^{(l)})$ such that $h|_{V_0}=f$ and $\cQ(f,f)=\frac1{r^{(l)}}\cQ^{(l)}(h,h)$. Then, for $v\in S^{(l)}$, the map
\[
A_v\colon l(V_0)\ni f\mapsto h\circ\psi_v|_{V_0}\in l(V_0)
\]
is linear.
By identifying $l(V_0)$ with $\R^{d+1}$, $A_v$ is regarded as a square matrix of size $d+1$.
As usual we write 
\[
A_{v_1v_2\cdots v_m}=A_{v_m}A_{v_{m-1}}\cdots A_{v_1}
\quad (v_1,v_2,\dots,v_m\in S).
\]
For each $l\ge2$, $A_{i^l}$ $(i=1,2,\dots,d+1)$ has eigenvalues $1$ and $r^{(l)}$ with multiplicities $1$, and the modulus of all the other eigenvalues are less than $r^{(l)}$. This follows from the general theory (see, e.g., \cite[Proposition~A.1.1 and Theorem~A.1.2]{Ki01}). In our situation, we can provide their eigenvectors explicitly: let $\bfone,u_i,v_i\in l(V_0)$ be defined as
\[
\bfone(p)=1\ (p\in V_0),\quad
u_i(p_k)=\begin{cases}-d & (k=i)\\ 1 &(k\ne i),\end{cases}\quad
v_i(p_k)=\begin{cases}0 & (k=i)\\ 1/d &(k\ne i).\end{cases}
\]
Fix $i'\in\{1,2,\dots,d+1\}\setminus\{i\}$. (For example, it suffices to take $i'=i+1$ for $i\ne d+1$ and $i'=1$ for $i=d+1$.)
For $j\in\{1,2,\dots,d+1\}\setminus\{i,i'\}$, let $y_{i,j}\in l(V_0)$ be defined as
\[
y_{i,j}(p_k)=\begin{cases}1 & (k=i') \\ -1 & (k=j)\\ 0 & \text{(otherwise).}\end{cases}
\]
Then, $\bfone$, $v_i$, and $y_{i,j}$ are eigenvectors of $A_{i^l}$ with respect to the eigenvalues $1$, $r^{(l)}$, and the other eigenvalue (say $s^{(l)}$), respectively, with $|s^{(l)}|<r^{(l)}$.
Moreover, $u_i$ is an eigenvector of $^t\! A_{i^l}$ with respect to the eigenvalue $r^{(l)}$. For the proof of the assertions for $u_i$ and $v_i$, see \cite[Lemmas~A.1.4 and A.1.5]{Ki01} and \cite[Lemma~5]{HN06}.
That $y_{i,j}$ is an eigenvector with respect to $s^{(l)}$ follows from the symmetry of $\cQ$ under isometries on $V_0$.
Furthermore, from the direct calculation, we can confirm that
\begin{equation}\label{eq:inner_product}
(u_i,\bfone)_{l(V_0)}=0,\quad (u_i,v_i)_{l(V_0)}=1,\quad (u_i,y_{i,j})_{l(V_0)}=0,
\end{equation}
where $(\cdot,\cdot)_{l(V_0)}$ denotes the standard inner product on $l(V_0)$.
We note that $\bfone$, $u_i$, $v_i$, and $y_{i,j}$ are common eigenvectors with respect to $l$.

Let $\tilde l(V_0)=\{u\in l(V_0)\mid (u,\bfone)_{l(V_0)}=0\}$ and $P$ denote the orthogonal projection of $l(V_0)$ onto $\tilde l(V_0)$.
The following lemma was proved in \cite[Lemma~3.5]{HY22} when $d=2$.
\begin{lemma}\label{lem:9}
Let $i\in\{1,2,\dots,d+1\}$, $u\in l(V_0)$, and $\tau=\{\tau_k\}_{k\in\N}\in T^\N$.
Then, it holds that
\begin{equation}\label{eq:PA}
\lim_{n\to\infty}r_{i^{\tau_1}i^{\tau_2}\cdots i^{\tau_n}}^{-1}PA_{i^{\tau_1}i^{\tau_2}\cdots i^{\tau_n}}u=(u_i,u)_{l(V_0)}P v_i
\end{equation}
and
\begin{equation}\label{eq:Q}
\lim_{n\to\infty}r_{i^{\tau_1}i^{\tau_2}\cdots i^{\tau_n}}^{-2}\cQ(A_{i^{\tau_1}i^{\tau_2}\cdots i^{\tau_n}}u,A_{i^{\tau_1}i^{\tau_2}\cdots i^{\tau_n}}u)=(u_i,u)_{l(V_0)}^2 \cQ(v_i,v_i).
\end{equation}
Here, these convergences are uniform in $i\in \{1,2,\dots,d+1\}$, $u\in \cC$, and $\tau\in T^\N$, where $\cC$ is the inverse image of an arbitrary compact set of $l(V_0)$ by $P$.
\end{lemma}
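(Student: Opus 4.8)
The plan is to exploit the structure recorded just before the statement: the vectors $\bfone$, $v_i$, and $y_{i,j}$ ($j\in\{1,\dots,d+1\}\setminus\{i,i'\}$) form a \emph{common} eigenbasis of $A_{i^l}$ for every $l\in T$, with eigenvalues $1$, $r^{(l)}$, and $s^{(l)}$ respectively, where $|s^{(l)}|<r^{(l)}$. Because these eigenvectors are common in $l$, the composite $A_{i^{\tau_1}i^{\tau_2}\cdots i^{\tau_n}}=A_{i^{\tau_n}}\cdots A_{i^{\tau_1}}$ acts diagonally in this basis: it fixes $\bfone$, multiplies $v_i$ by $r_n:=\prod_{k=1}^n r^{(\tau_k)}=r_{i^{\tau_1}\cdots i^{\tau_n}}$, and multiplies every $y_{i,j}$ by $s_n:=\prod_{k=1}^n s^{(\tau_k)}$. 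Setting $Y_i:=\operatorname{span}\{y_{i,j}\}\subset\tilde l(V_0)$, I would first write an arbitrary $u\in l(V_0)$ as $u=a\bfone+b\,v_i+y$ with $y\in Y_i$, and pair with the left eigenvector $u_i$: using the relations $(u_i,\bfone)_{l(V_0)}=0$, $(u_i,v_i)_{l(V_0)}=1$, $(u_i,y_{i,j})_{l(V_0)}=0$ from \eqref{eq:inner_product}, this yields $b=(u_i,u)_{l(V_0)}$.

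Next I would compute the scaled image. Since $A_{i^{\tau_1}\cdots i^{\tau_n}}u=a\bfone+b\,r_n v_i+s_n y$, and $P$ annihilates $\bfone$ while fixing $y$ (as $y\in\tilde l(V_0)$), dividing by $r_n$ gives
\[
r_{i^{\tau_1}\cdots i^{\tau_n}}^{-1}PA_{i^{\tau_1}\cdots i^{\tau_n}}u=b\,Pv_i+(s_n/r_n)\,y.
\]
Because $T$ is finite, $\theta:=\max_{l\in T}|s^{(l)}|/r^{(l)}<1$, so $|s_n/r_n|\le\theta^n\to0$, and \eqref{eq:PA} follows once $y$ is known to stay bounded. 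For \eqref{eq:Q}, I would use that $\cQ$ annihilates constants (so the $a\bfone$ term drops out) to obtain
\[
r_{i^{\tau_1}\cdots i^{\tau_n}}^{-2}\cQ\bigl(A_{i^{\tau_1}\cdots i^{\tau_n}}u,A_{i^{\tau_1}\cdots i^{\tau_n}}u\bigr)=b^2\cQ(v_i,v_i)+2b(s_n/r_n)\cQ(v_i,y)+(s_n/r_n)^2\cQ(y,y),
\]
and the final two terms vanish as $n\to\infty$, leaving the claimed limit $(u_i,u)_{l(V_0)}^2\,\cQ(v_i,v_i)$.

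It remains to secure the uniformity assertions. Uniformity in $i$ is automatic since there are only $d+1$ indices, and uniformity in $\tau$ follows from the $\tau$-independent bound $|s_n/r_n|\le\theta^n$. The one point that needs care is uniformity over $\cC=P^{-1}(\text{compact set})$: membership $u\in\cC$ only constrains $Pu$ and permits an arbitrary (unbounded) $\bfone$-component $a$. Here I would observe that everything factors through $Pu$. Indeed, since $u_i\perp\bfone$ one has $b=(u_i,u)_{l(V_0)}=(u_i,Pu)_{l(V_0)}$, and then $y=Pu-b\,Pv_i$; thus both $b$ and $y$ are continuous linear functions of $Pu$, hence bounded as $Pu$ ranges over the fixed compact set, and $\cQ(v_i,y)$, $\cQ(y,y)$ are likewise bounded by continuity of $\cQ$. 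This makes the error terms uniformly $O(\theta^n)$ over $u\in\cC$.

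I expect the only genuine (and still mild) obstacle to be precisely this bookkeeping in the uniformity claim, namely confirming that no quantity depends on the unbounded $\bfone$-component. This is resolved by the twin facts that $P$ kills $\bfone$ and that $\cQ$ is invariant under adding constants, so that the entire computation descends to $Pu$, which is confined to a compact set by the definition of $\cC$; the spectral part itself is routine once the common eigenbasis and the uniform gap $\theta<1$ are in hand.
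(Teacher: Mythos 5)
Your proof is correct and follows essentially the same route as the paper: decompose $u$ in the common eigenbasis $\{\bfone, v_i, y_{i,j}\}$, use the uniform ratio bound $\theta=\max_{l\in T}|s^{(l)}/r^{(l)}|<1$, and identify the $v_i$-coefficient as $(u_i,u)_{l(V_0)}$ via \eqref{eq:inner_product}. Your explicit verification that the error terms depend only on $Pu$ (hence are uniform over $\cC$ despite the unbounded $\bfone$-component) makes precise a point the paper leaves implicit, and your direct expansion of $\cQ$ is just an unpacking of the paper's remark that \eqref{eq:Q} follows from \eqref{eq:PA}.
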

\begin{proof}
We first note that for all $n\in\N$,
\begin{align}
& P A_{i^{\tau_1}i^{\tau_2}\cdots i^{\tau_n}}\bfone=0,\label{eq:conv1}\\
&r_{i^{\tau_1}i^{\tau_2}\cdots i^{\tau_n}}^{-1}A_{i^{\tau_1}i^{\tau_2}\cdots i^{\tau_n}}v_i=v_i,\label{eq:conv2}\\
&|r_{i^{\tau_1}i^{\tau_2}\cdots i^{\tau_n}}^{-1}A_{i^{\tau_1}i^{\tau_2}\cdots i^{\tau_n}}y_{i,j}|
=|r_{i^{\tau_1}i^{\tau_2}\cdots i^{\tau_n}}^{-1}s_{i^{\tau_1}}s_{i^{\tau_2}}\cdots s_{i^{\tau_n}}y_{i,j}|\le\theta^n|y_{i,j}|\nonumber\\*
&\hspace{19em}(j\in\{1,2,\dots,d+1\}\setminus\{i,i'\}),\label{eq:conv3}
\end{align}
where $\theta=\max_{l\in T}|s^{(l)}/r^{(l)}|\in[0,1)$.
Since $\bfone$, $v_i$, and $y_{i,j}$'s $(j\in\{1,2,\dots,d+1\}\setminus\{i,i'\})$ form a basis of $l(V_0)$, any $u\in l(V_0)$ can be uniquely expressed as
\[
u=x_0\bfone+x_i v_i+\sum_{j\in\{1,2,\dots,d+1\}\setminus\{i,i'\}}x_j y_{i,j}
\quad(x_0,x_i,x_j\in\R).
\]
From \eqref{eq:conv1}, \eqref{eq:conv2}, and \eqref{eq:conv3}, 
\[
\lim_{n\to\infty}r_{i^{\tau_1}i^{\tau_2}\cdots i^{\tau_n}}^{-1}PA_{i^{\tau_1}i^{\tau_2}\cdots i^{\tau_n}}u=x_i P v_i,
\]
where the convergence is uniform as in the statement of the lemma.
From \eqref{eq:inner_product}, $(u_i,u)_{l(V_0)}=x_i$.
Thus, \eqref{eq:PA} holds.
\eqref{eq:Q} follows from \eqref{eq:PA}.
\end{proof}
\begin{lemma}\label{lem:10}
For each $c>0$, there exists $N\in\N$ such that for any $m\in\Z_+$, $w\in\tilde W_m$, $h\in \cH_{K_w\setminus\psi_w(V_0)}$, and $i\in\{1,2,\dots,d+1\}$, it holds
\[
\nu_h(K_{w i^{l_1}i^{l_2}\cdots i^{l_N}})\le c\nu_h(K_w),
\]
where $l_1,l_2,\dots,l_N\in T$ are uniquely determined so that $w i^{l_1}i^{l_2}\cdots i^{l_N}\in\tilde W_{m+N}$.
\end{lemma}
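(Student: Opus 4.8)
The plan is to reduce both energy masses to the boundary quadratic form $\cQ$ on $V_0$ via Lemma~\ref{lem:7}, and then to feed the iterated harmonic--extension matrices into the corner asymptotics of Lemma~\ref{lem:9}. Write $v=wi^{l_1}i^{l_2}\cdots i^{l_N}\in\tilde W_{m+N}$ and set $f=h\circ\psi_w|_{V_0}\in l(V_0)$. First I would note that since $p_i$ is the common fixed point of every $\psi_{i^l}$, the only vertex of $\psi_w(V_0)$ lying in $K_v$ is $\psi_v(p_i)=\psi_w(p_i)$, so $K_v\setminus\psi_v(V_0)\subset K_w\setminus\psi_w(V_0)$ and hence $h$ is harmonic on $K_v\setminus\psi_v(V_0)$ as well (harmonicity on an open set passes to open subsets, by the characterization in Lemma~\ref{lem:3.2}). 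Applying Lemma~\ref{lem:7} at both scales gives
\[
\nu_h(K_w)=\frac{2}{r_w}\cQ(f,f),\qquad
\nu_h(K_v)=\frac{2}{r_v}\cQ\bigl(h\circ\psi_v|_{V_0},\,h\circ\psi_v|_{V_0}\bigr).
\]
The structural identity to establish is $h\circ\psi_v|_{V_0}=A_{i^{l_1}i^{l_2}\cdots i^{l_N}}f$: propagating the harmonic boundary values cell by cell (using Lemma~\ref{lem:6} and Proposition~\ref{prop:4}, the fact that the level-$l$ harmonic extension realizes $A_{i^l}$ on the $i$-th corner cell, and $l_{k+1}=L_{wi^{l_1}\cdots i^{l_k}}$), the data on the $N$-fold corner cell is the composition $A_{i^{l_N}}\cdots A_{i^{l_1}}f=A_{i^{l_1}\cdots i^{l_N}}f$ in the notation of the excerpt.

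Combining these and using $r_v=r_w\,r_{i^{l_1}\cdots i^{l_N}}$,
\[
\frac{\nu_h(K_v)}{\nu_h(K_w)}=r_{i^{l_1}\cdots i^{l_N}}^{-1}\,\frac{\cQ\bigl(A_{i^{l_1}\cdots i^{l_N}}f,A_{i^{l_1}\cdots i^{l_N}}f\bigr)}{\cQ(f,f)}.
\]
If $\cQ(f,f)=0$, then $f$ is constant, $h$ is constant on $K_w$ by the maximum principle \eqref{eq:max}, and both sides vanish, so the inequality is trivial; hence I may assume $\cQ(f,f)>0$. Because $A_v\bfone=\bfone$ and $\cQ$ annihilates constants, the ratio is invariant under $f\mapsto f+c\bfone$ and is $0$-homogeneous in $f$, so it depends only on $Pf\in\tilde l(V_0)$; I may therefore normalize $f\in\tilde l(V_0)$ with $\cQ(f,f)=1$, so that $f$ ranges over the compact set $S=\{u\in\tilde l(V_0)\mid\cQ(u,u)=1\}$.

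It remains to bound $r_{i^{l_1}\cdots i^{l_N}}^{-1}\cQ(A_{i^{l_1}\cdots i^{l_N}}f,A_{i^{l_1}\cdots i^{l_N}}f)$ uniformly. The main point is uniformity over $w$ (equivalently over the induced sequence $(l_1,l_2,\dots)\in T^\N$), over $i\in\{1,\dots,d+1\}$, and over $f\in S$, which is precisely what the uniform convergence in Lemma~\ref{lem:9} provides: taking $\cC=P^{-1}(S)\supset S$ in \eqref{eq:Q}, there are $N_0\in\N$ and $B<\infty$ with $r_{i^{\tau_1}\cdots i^{\tau_N}}^{-2}\cQ(A_{i^{\tau_1}\cdots i^{\tau_N}}f,A_{i^{\tau_1}\cdots i^{\tau_N}}f)\le B$ for all $N\ge N_0$, all $\tau\in T^\N$, all such $i$, and all $f\in S$ (the limit $(u_i,f)_{l(V_0)}^2\cQ(v_i,v_i)$ is continuous on the compact $S$ and $i$ ranges over a finite set). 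With $r=\max_{l\in T}r^{(l)}\in(0,1)$ and $r_{i^{l_1}\cdots i^{l_N}}=\prod_{k=1}^N r^{(l_k)}\le r^N$, I obtain
\[
\frac{\nu_h(K_v)}{\nu_h(K_w)}=r_{i^{l_1}\cdots i^{l_N}}\cdot r_{i^{l_1}\cdots i^{l_N}}^{-2}\cQ\bigl(A_{i^{l_1}\cdots i^{l_N}}f,A_{i^{l_1}\cdots i^{l_N}}f\bigr)\le r^N B
\]
for $N\ge N_0$. Since $r^NB\to0$, choosing $N\ge N_0$ with $r^NB\le c$ finishes the proof. I expect the only genuinely delicate step to be the cell-by-cell propagation identity $h\circ\psi_v|_{V_0}=A_{i^{l_1}\cdots i^{l_N}}f$ together with the bookkeeping that the correct contraction factors $r^{(l_k)}$ appear; the decay itself is then immediate from the spectral gap $|s^{(l)}|<r^{(l)}$ already exploited in Lemma~\ref{lem:9}.
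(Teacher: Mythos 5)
Your proposal is correct and follows essentially the same route as the paper: reduce both energy masses to the trace form $\cQ$ via Lemma~\ref{lem:7} (with the boundary values on the corner cell given by $A_{i^{l_1}\cdots i^{l_N}}$ applied to $h\circ\psi_w|_{V_0}$), invoke the uniform convergence in Lemma~\ref{lem:9} to bound $r_v^{-2}\cQ(A_vu,A_vu)/\cQ(u,u)$ by a constant depending only on $d$ and $T$, and then use $r_v\le r^N$ to make the ratio small. The extra details you supply (localization of harmonicity to the corner cell, the degenerate case $\cQ(f,f)=0$, and the normalization to a compact set) are all consistent with, and implicit in, the paper's argument.
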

\begin{proof}
Let $u=h\circ\psi_w|_{V_0}\in l(V_0)$ and $v=i^{l_1}i^{l_2}\cdots i^{l_N}$. From Lemma~\ref{lem:7},
\begin{align*}
\nu_h(K_w)&=\frac2{r_w}\cQ(u,u),\\
\nu_h(K_{wv})&=\frac2{r_{wv}}\cE^{[wv]}(h\circ\psi_{wv}|_{K^{[wv]}},h\circ\psi_{wv}|_{K^{[wv]}})
=\frac2{r_{wv}}\cQ(A_v u,A_v u).
\end{align*}
It suffices to consider only when $\cQ(u,u)>0$.
Then
\[
\frac{\nu_h(K_{wv})}{\nu_h(K_w)}=\frac{\cQ(A_v u,A_v u)}{r_v\cQ(u,u)}
=r_v\cdot\frac{r_v^{-2}\cQ(A_v u,A_v u)}{\cQ(u,u)}.
\]
From Lemma~\ref{lem:9},
\begin{align*}
\frac{r_v^{-2}\cQ(A_v u,A_v u)}{\cQ(u,u)}
&\xrightarrow{N\to\infty}
\frac{(u_i,u)_{l(V_0)}^2 \cQ(v_i,v_i)}{\cQ(u,u)}\\
&\le \max_{|u|_{l(V_0)}=1,\ i\in\{1,2,\dots,d+1\}}\frac{(u_i,u)_{l(V_0)}^2}{\cQ(u,u)}\cdot
\max_{i\in\{1,2,\dots,d+1\}}\cQ(v_i,v_i)\\
&<\infty.
\end{align*}
Since the convergence is uniform as in the claim of Lemma~\ref{lem:9}, $r_v^{-2}\cQ(A_v u,A_v u)/\cQ(u,u)$ is dominated by some positive constant $C$ depending only $d$ and $T$.
Since $r_v\le r^N$, it suffices to take $N$ that is larger than $\log_r(c/C)$.
\end{proof}
In what follows, we assume that $\mu(V_*)=0$. One of the natural choices of $\mu$ is given by
\begin{equation}\label{eq:mu}
\mu(K_w)=\prod_{j=1}^m N(l_j)^{-1}\quad\text{for }
w=i_1^{l_1}i_2^{l_2}\cdots i_m^{l_m}\in\tilde W_m.
\end{equation}
We now confirm (A1)--(A3) in Assumption~\ref{assumption}.
(A1) is obvious.
Take $N\in\N$ in Lemma~\ref{lem:10} for $c=\{2(d+1)\}^{-1}$.
We set $\Lm_n=\tilde W_n$ for $n\in\N$.
For each $w\in\Lm_n$, we define
\[
U_w^{(n)}=K_w\setminus \psi_w(V_0)\quad\text{and}\quad
V_w^{(n)}=\text{the closure of }\left(K_w\setminus\bigcup_{i=1}^{d+1}K_{w v^{(i)}}\right),
\]
where $v^{(i)}=i^{l_1(w,i)}i^{l_2(w,i)}\cdots i^{l_N(w,i)}\in W_N$ and $l_1(w,i),l_2(w,i),\dots,l_N(w,i)\in T$ are uniquely determined so that $w v^{(i)}\in \tilde W_{n+N}$.

Then, (A2)~(a) holds from the construction.
(A2)~(b) holds by the same reason as in the case of p.c.f.\ self-similar sets in Section~\ref{sec:pcf}.
(A2)~(c) is evident.

(A3)~(a) holds because for $w\in\Lm_n$ and $h\in\cH_{U_w^{(n)}}$,
\begin{align*}
\nu_h(V_w^{(n)})
&=\nu_h(K_w)-\sum_{i=1}^{d+1}\nu_h(K_{w v^{(i)}})\\
&\ge \nu_h(K_w)-\sum_{i=1}^{d+1}\frac{1}{2(d+1)}\nu_h(K_w)
\qquad\text{(from the choice of $N$)}\\
&=\frac12\nu_h(K_w)
=\frac12\nu_h(U_w^{(n)}).
\end{align*}
For confirming (A3)~(b) and (c), we prepare the following:
\begin{proposition}\label{prop:11}
There exist positive constants $c_7$ and $c_8$ such that the following hold.
\begin{enumerate}
\item For every $w\in\tilde W_*$ and $f\in\cF$,
\[
\nu_f(K_w)\ge\frac{c_7}{r_w}\Bigl(\osc_{K_w}f\Bigr)^2.
\]
\item For any $w\in\tilde W_*$ and $h\in\cH_{K_w\setminus\psi_w(V_0)}$,
\[
\nu_h(K_w)\le\frac{c_8}{r_w}\Bigl(\osc_{K_w}h\Bigr)^2.
\]
\end{enumerate}
\end{proposition}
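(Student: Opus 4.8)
The plan is to reduce both inequalities to the unscaled configuration on the blown-up gasket $K^{[w]}=G(L^{[w]})$, which is itself an inhomogeneous Sierpinski gasket, by exploiting the scaling identity \eqref{eq:A} together with Lemma~\ref{lem:6}. Since $\psi_w|_{K^{[w]}}\colon K^{[w]}\to K_w$ is a bijection, oscillation is preserved: for every $f\in\cF$ one has $\osc_{K_w}f=\osc_{K^{[w]}}f^{[w]}$, where $f^{[w]}=f\circ\psi_w|_{K^{[w]}}\in\cF^{[w]}$. Moreover \eqref{eq:A} gives $\nu_f(K_w)=(2/r_w)\cE^{[w]}(f^{[w]},f^{[w]})$. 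Hence both (i) and (ii) follow once the corresponding unscaled estimates are established for $(\cE^{[w]},\cF^{[w]})$ with constants that can be chosen uniformly over the whole family $\{K^{[w]}\}_{w\in\tilde W_*}$.

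For part (i), I would apply the oscillation estimate Proposition~\ref{prop:poincare} to $(\cE^{[w]},\cF^{[w]})$. Because the constant $c_4$ in Proposition~\ref{prop:2} is independent of the generating family $L$, the same $c_4$ governs every $K^{[w]}$, so $\osc_{K^{[w]}}f^{[w]}\le c_4\,\cE^{[w]}(f^{[w]},f^{[w]})^{1/2}$. Squaring and substituting $\cE^{[w]}(f^{[w]},f^{[w]})=(r_w/2)\,\nu_f(K_w)$ from \eqref{eq:A} yields
\[
\Bigl(\osc_{K_w}f\Bigr)^2\le c_4^2\cdot\frac{r_w}{2}\,\nu_f(K_w),
\]
so (i) holds with $c_7=2/c_4^2$.

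For part (ii), the energy of a harmonic function is controlled by its boundary data. By Lemma~\ref{lem:7}, $\nu_h(K_w)=(2/r_w)\,\cQ(h^{[w]}|_{V_0},h^{[w]}|_{V_0})$, where $h^{[w]}|_{V_0}=h\circ\psi_w|_{V_0}$. By Lemma~\ref{lem:6}, $h^{[w]}$ is harmonic on $K^{[w]}\setminus V_0$, so the maximum principle \eqref{eq:max} (with $m=0$) forces $h^{[w]}$ to attain its extrema over $K^{[w]}$ on $V_0$, whence $\osc_{V_0}h^{[w]}|_{V_0}=\osc_{K^{[w]}}h^{[w]}=\osc_{K_w}h$. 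Since $\cQ(u,u)=\sum_{\{p,q\}\in E_0}(u(p)-u(q))^2$ is a finite sum of squared differences, each at most $(\osc_{V_0}u)^2$, we obtain $\cQ(h^{[w]}|_{V_0},h^{[w]}|_{V_0})\le \binom{d+1}{2}\bigl(\osc_{K_w}h\bigr)^2$, and therefore (ii) with $c_8=d(d+1)$.

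The only genuine subtlety—and the reason the groundwork in Propositions~\ref{prop:2} and~\ref{prop:poincare} was laid with $L$-independent constants—is the uniformity of $c_4$ across the entire family $\{K^{[w]}\}_{w\in\tilde W_*}$. In the self-similar case of Section~\ref{sec:pcf} this is automatic, but here the blow-ups are genuinely different gaskets, so one cannot invoke compactness of a single finite-dimensional harmonic space as in Proposition~\ref{prop:equivalence}; instead the chaining argument of Proposition~\ref{prop:2}, whose constant depends only on $d$ and $T$, is precisely what supplies the needed uniform control. Part (ii) carries no such difficulty, since its constant is purely combinatorial.
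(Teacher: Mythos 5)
Your proposal is correct and takes essentially the same route as the paper: part (i) via \eqref{eq:A}, Lemma~\ref{lem:6}, and the $L$-uniform oscillation bound of Proposition~\ref{prop:poincare} (giving $c_7=2/c_4^2$), and part (ii) via Lemma~\ref{lem:7} together with the maximum principle. The only cosmetic difference is that you make the constant in $\cQ(u,u)\le c_9(\osc_{V_0}u)^2$ explicit as $\binom{d+1}{2}$, where the paper simply invokes finite-dimensionality of $l(V_0)$.
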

\begin{proof}
\begin{enumerate}
\item Let $f^{[w]}=f\circ\psi_w|_{K^{[w]}}$. Then, $f^{[w]}\in\cF^{[w]}$ from Lemma~\ref{lem:6} and
\begin{align*}
\nu_f(K_w)
&=\frac2{r_w}\cE^{[w]}(f^{[w]},f^{[w]})
\quad\text{(from \eqref{eq:A})}\\
&\ge \frac2{r_w}\cdot c_4^{-2}\Bigl(\osc_{K^{[w]}}f^{[w]}\Bigr)^2
\quad\text{(from Proposition~\ref{prop:poincare})}\\
&=\frac{2c_4^{-2}}{r_w}\Bigl(\osc_{K_w}f\Bigr)^2.
\end{align*}
\item Let $u=h\circ\psi_w|_{V_0}\in l(V_0)$.
By Lemma~\ref{lem:7}, $\nu_h(K_w)=\frac2{r_w}\cQ(u,u)$.
Since $l(V_0)$ is finite dimensional, there exists a constant $c_9>0$ that depends only on $d$ such that $\cQ(u,u)\le c_9\left(\osc_{V_0}u\right)^2$. From the maximum principle (Proposition~\ref{prop:4}),
\[
\osc_{V_0}u=\osc_{\psi_w(V_0)}h=\osc_{K_w}h.
\]
Therefore, it suffices to take $c_8=2c_9$.
\qedhere
\end{enumerate}
\end{proof}
We will confirm (A3)~(b). Suppose $w\in \tilde W_n$ for $n\in\Z_+$. We define $f\in l(V_{n+N})$ as
\[
f(x)=\begin{cases} 1& (x\in V_{n+N}\cap \psi_w(\tilde K\setminus V_0))\\ 0& \text{(otherwise).}\end{cases}
\]
Take an $n+N$-harmonic function $g\in\cF$ such that $g|_{V_{n+N}}=f$.
Note that $g=1$ on $V_w^{(n)}$ and $g=0$ on $K\setminus U_w^{(n)}$ by construction.
Then,
\begin{align*}
\Cp(V_w^{(n)};U_w^{(n)})
&\le \cE(g,g) \qquad\text{(from Proposition~\ref{prop:qe} (i))}\\
&=\frac1{r_w}\cE^{[w]}(g^{[w]},g^{[w]})\quad\text{(from Lemma~\ref{lem:6})}\\
&=\frac1{r_w}\cQ^{(N)}(g^{[w]}|_{V_{N}^{[w]}},g^{[w]}|_{V_{N}^{[w]}}),
\end{align*}
where $V_{N}^{[w]}:=\psi_w^{-1}(V_{n+N})$.
Since $\cQ^{(N)}(g^{[w]}|_{V_{N}^{[w]}},g^{[w]}|_{V_{N}^{[w]}})$ take values in at most $(\# T)^{N}$ kinds of numbers (indeed, the family $\{V_{N}^{[w]}\}_{w\in\tilde W_*}$ has at most $(\# T)^{N}$ kinds), we have $\Cp(V_w^{(n)};U_w^{(n)})\le c_{10}/r_w$ for some $c_{10}>0$ that depends only on $d$ and $T$.

Let $h\in\cH_{U_w^{(n)}}$.
Then,
\begin{align*}
\nu_h(U_w^{(n)})&=\nu_h(K_w)\ge\frac{c_7}{r_w}\Bigl(\osc_{K_w}h\Bigr)^2
\quad\text{(from Proposition~\ref{prop:11}~(i))}\\
&\ge\frac{c_7}{c_{10}}\Cp(V_w^{(n)};U_w^{(n)})\Bigl(\osc_{U_w^{(n)}}h\Bigr)^2.
\end{align*}
Therefore, (A3)~(b) holds.

Lastly, we will confirm (A3)~(c). Let $x\in V_w^{(n)}$ and $e_x$ denote $e_{\{x\};U_w^{(n)}}$. 
Then,
\begin{align*}
\Cp(\{x\};U_w^{(n)})
&=\cE(e_x,e_x)\\
&=\frac1{r_w}\cE^{[w]}(e_x\circ\psi_w|_{K^{[w]}},e_x\circ\psi_w|_{K^{[w]}})\quad\text{(from Lemma~\ref{lem:6})}\\
&\ge \frac{1}{c_4^2 r_w}\left(\osc_{K^{[w]}}e_x\circ\psi_w|_{K^{[w]}}\right)^2\quad\text{(from Proposition~\ref{prop:poincare})}\\
&=\frac{1}{c_4^2 r_w}.
\end{align*}
Let $h\in\cH_{U_w^{(n)}}$. Then,
\begin{align*}
\nu_h(U_w^{(n)})&=\nu_h(K_w)\le\frac{c_8}{r_w}\Bigl(\osc_{K_w}h\Bigr)^2
\quad\text{(from Proposition~\ref{prop:11}~(ii))}\\
&\le c_8c_4^2\Cp(\{x\};U_w^{(n)})\Bigl(\osc_{U_w^{(n)}}h\Bigr)^2.
\end{align*}
Therefore, (A3)~(b) holds.

Thus, the following holds from Theorem~\ref{th:main}.
\begin{theorem}
The AF-martingale dimension corresponding to $(\cE,\cF)$ on $L^2(G(L),\mu)$ is one.
\end{theorem}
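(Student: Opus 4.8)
The plan is to deduce the theorem directly from Theorem~\ref{th:main}: it suffices to verify that the canonical Dirichlet form $(\cE,\cF)$ on $L^2(G(L),\mu)$ satisfies conditions (A1)--(A3) of Assumption~\ref{assumption}, with the hierarchical partition supplied by the word structure of the gasket. Concretely, I would set $\Lm_n=\tilde W_n$ and, for $w\in\tilde W_n$, take the cell $U_w^{(n)}=K_w\setminus\psi_w(V_0)$ together with an inner set $V_w^{(n)}$ obtained by deleting the $d+1$ ``corner subcells'' $K_{wv^{(i)}}$ sitting at the fixed points $p_i$. The nestedness of $\{\tilde W_n\}$ gives the refinement property, and the fact that the closure of $V_*$ is $G(L)$ together with $\mu(V_*)=\nu(V_*)=0$ gives that the generated $\sg$-field is Borel up to null sets; compactness of $K$ makes the finiteness condition trivial. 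Thus (A1) and (A2) are routine.

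The heart of the matter is (A3), and the obstacle is that, unlike the p.c.f.\ case of Section~\ref{sec:pcf}, there is no exact self-similarity: the map $\psi_w$ sends $K$ to a cell $K^{[w]}$ that is itself an inhomogeneous gasket $G(L^{[w]})$ with a different prescription, so one cannot simply invoke compactness of a single finite-dimensional space of harmonic functions. The way around this is to reduce every quantity to the trace form $\cQ$ on the fixed vertex set $V_0$ and to the linear harmonic-extension matrices $A_v$. By Lemma~\ref{lem:6} and \eqref{eq:A}, for $h$ harmonic on $K_w\setminus\psi_w(V_0)$ one has $\nu_h(K_w)=(2/r_w)\cQ(u,u)$ with $u=h\circ\psi_w|_{V_0}$, and $\nu_h(K_{wv})=(2/r_{wv})\cQ(A_vu,A_vu)$. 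The spectral structure of the $A_{i^l}$---eigenvalues $1$, $r^{(l)}$, and remaining ones of modulus $<r^{(l)}$, with \emph{common} eigenvectors $\bfone,v_i,y_{i,j}$ across all $l\in T$---yields, via Lemma~\ref{lem:9}, the crucial \emph{uniform} (in $i$, in $u$, and in $\tau\in T^\N$) convergence $r_v^{-2}\cQ(A_vu,A_vu)\to(u_i,u)_{l(V_0)}^2\,\cQ(v_i,v_i)$. This is precisely what compensates for the missing self-similarity.

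With this in hand, I would verify the three sub-conditions as follows. For (A3)(a), Lemma~\ref{lem:10} (an immediate consequence of the uniform convergence above) lets me choose a single depth $N$, independent of $w$ and $L$, so that each corner subcell carries at most a $\{2(d+1)\}^{-1}$ fraction of $\nu_h(K_w)$; summing over the $d+1$ corners gives $\nu_h(V_w^{(n)})\ge\tfrac12\nu_h(U_w^{(n)})$. For (A3)(b) and (A3)(c) I would use the two-sided oscillation--energy equivalence of Proposition~\ref{prop:11}, namely $\nu_f(K_w)\ge(c_7/r_w)(\osc_{K_w}f)^2$ for all $f\in\cF$ and $\nu_h(K_w)\le(c_8/r_w)(\osc_{K_w}h)^2$ for harmonic $h$. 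Bounding the relative capacity $\Cp(V_w^{(n)};U_w^{(n)})$ from above by the energy of an explicit $(n+N)$-harmonic cutoff---whose value is one of only finitely many numbers by the finiteness of $T$---gives $\Cp(V_w^{(n)};U_w^{(n)})\le c_{10}/r_w$, which combined with the lower oscillation bound yields (A3)(b). Symmetrically, bounding $\Cp(\{x\};U_w^{(n)})$ from below via Proposition~\ref{prop:poincare} (applied to the scaled equilibrium potential) by $1/(c_4^2r_w)$ and combining with the upper oscillation bound yields (A3)(c).

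The main difficulty, then, is not any single estimate but securing \emph{uniformity of all constants in $w$ and in the inhomogeneity data $L$}; everything hinges on Lemma~\ref{lem:9}, whose uniformity in turn rests on the explicit common eigenvectors and the uniform spectral gap $\theta=\max_{l\in T}|s^{(l)}/r^{(l)}|<1$ afforded by the finiteness of $T$. Once (A1)--(A3) are in place, Theorem~\ref{th:main} gives $d_\mathrm{m}=1$ immediately.
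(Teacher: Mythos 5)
Your proposal is correct and follows essentially the same route as the paper: the same partition $\Lm_n=\tilde W_n$ with cells $U_w^{(n)}=K_w\setminus\psi_w(V_0)$ and corner-deleted inner sets $V_w^{(n)}$, the same reduction of (A3)(a) to Lemma~\ref{lem:10}, and the same use of Proposition~\ref{prop:11} together with the explicit capacity bounds $\Cp(V_w^{(n)};U_w^{(n)})\le c_{10}/r_w$ and $\Cp(\{x\};U_w^{(n)})\ge 1/(c_4^2 r_w)$ for (A3)(b) and (c). You also correctly identify the crux---uniformity of all constants in $w$ and $L$ via the common eigenvectors and the finiteness of $T$ in Lemma~\ref{lem:9}---which is exactly where the paper's argument places the weight.
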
 
\begin{remark}\label{rem:inh}
We provide a few remarks on the inhomogeneous Sierpinski gaskets $G(L)$.
\begin{enumerate}
\item In \cite[Theorem~1.2]{Ha97}, the quantitative estimate of the heat kernel on $G(L)$ is obtained for ``almost all'' $L\in T^{W_*}$ when $d=2$, but is not as sharp as that for the standard Sierpinski gasket. The upper and lower bounds involve terms of different forms.
\item By applying Frostman's lemma to the measure $\mu$ given in \eqref{eq:mu}, we can prove that the Hausdorff dimension of $G(L)$ is greater or equal to
\[
\min_{l\in T}\log \frac{N(l)}{l}\ge\log \frac{d+1}2,
\]
the right-hand side of which diverges to $+\infty$ as $d\to\infty$. This shows the considerable difference between the Hausdorff dimension and the martingale dimension.
\end{enumerate}
\end{remark}
The arguments developed in this subsection remain valid for a broader class of inhomogeneous fractal-like spaces, as long as  appropriate uniform bounds for harmonic functions and relative capacities are obtained.
\bigskip

\subsection*{Acknowledgments}
This work was supported by JSPS KAKENHI Grant Number 25K07056 and The Kyoto University Foundation.

\end{document}